\DeclareMathOperator*{\opt}{opt}
\DeclareMathOperator*{\argmax}{argmax}
\newcommand{\one}{\mathds{1}}
\renewcommand{\S}{\mathbb{S}}
\newcommand{\usc}{\text{USC}}
\newcommand{\lsc}{\text{LSC}}
\newcommand{\A}{{\mathcal A}}
\DeclareMathOperator{\Lip}{Lip}
\renewcommand{\O}{{\mathcal O}}
\renewcommand{\bar}[1]{{\overline{#1}}}
\newcommand{\F}{{\mathcal F}}
\newcommand{\E}{{\mathbb E}}
\renewcommand{\H}{{\mathcal H}}
\newcommand{\B}{{\mathcal B}}
\newcommand{\R}{\mathbb{R}}
\newcommand{\Z}{\mathbb{Z}}
\newcommand{\eps}{\varepsilon}
\renewcommand{\phi}{\varphi}
\renewcommand{\v}{\mathbf{v}}
\newcommand{\e}{\mathbf{e}}
\newcommand{\p}{\mathbf{p}}
\newcommand{\w}{\mathbf{w}}
\newcommand{\D}{\mathbb{D}}
\newcommand{\Zh}{\Z^n_h}
\newcommand{\Zn}{\Z^n}
\newcommand{\Zmh}{\Z^{n-1}_h}
\renewcommand{\epsilon}{\varepsilon}
\def\XXint#1#2#3{{\setbox0=\hbox{$#1{#2#3}{\int}$ }
\vcenter{\hbox{$#2#3$ }}\kern-.6\wd0}}
\newtheorem{theorem}{Theorem}
\newtheorem{lemma}[theorem]{Lemma}
\newtheorem{proposition}[theorem]{Proposition}
\theoremstyle{definition}
\newtheorem{remark}[theorem]{Remark}
\newtheorem{definition}[theorem]{Definition}
\newtheorem{conjecture}[theorem]{Conjecture}
\numberwithin{equation}{section}
\numberwithin{theorem}{section}
\title{Numerical solution of a PDE arising from prediction with expert advice\thanks{ {\bf Funding:} Calder and Mosaphir were partially supported by NSF-DMS grant 1944925, and Calder was partially supported by the Alfred P. Sloan foundation, a McKnight Presidential Fellowship, and the Albert and Dorothy Marden Professorship. Drenska was partially supported by NSF-DMS grant 2407839. {\bf Source Code:} \url{https://github.com/jwcalder/PredictionPDE}}}
\author{Jeff Calder}
\affil{School of Mathematics, University of Minnesota\thanks{{\bf Email:} \textit{jcalder@umn.edu}}}
\author{Nadejda Drenska}
\affil{Department of Mathematics, Louisiana State University\thanks{{\bf Email:} \textit{ndrenska@lsu.edu}}}
\author{Drisana Mosaphir}
\affil{School of Mathematics, University of Minnesota\thanks{{\bf Email:} \textit{mosap001@umn.edu}}}
\begin{document} 

\maketitle

\begin{abstract}
This work investigates the online machine learning problem of prediction with expert advice in an adversarial setting through numerical analysis of, and experiments with, a related partial differential equation.  The problem is a repeated two-person game involving decision-making at each step informed by $n$ experts in an adversarial environment. The continuum limit of this game over a large number of steps is a degenerate elliptic equation whose solution encodes the optimal strategies for both players. We develop numerical methods for approximating the solution of this equation in relatively high dimensions ($n\leq 10$) by exploiting symmetries in the equation and the solution to drastically reduce the size of the computational domain. Based on our numerical results we make a number of conjectures about the optimality of various adversarial strategies, in particular about the non-optimality of the COMB strategy.
\end{abstract}

\section{Introduction}

This paper is focused on the classical online learning problem of prediction with expert advice.  Given the advice of $n$ experts who each make predictions in real time about an unknown time-varying quantity (e.g., the price of a stock or option at some time in the future), a player must decide which expert's advice to follow.  The problem is often formulated in an \emph{online} setting, whereby at each step of the game, the player has knowledge of the historical performance of each expert and may use this information to decide which expert to follow at that step. The overall goal is to perform as well as the best performing expert, or as close to this as possible.  We are particularly interested in the adversarial setting, where the performance of the experts is controlled by an adversary, whose goal is to minimize the gains of the player. 

A simple and common way to formulate the game is to assume each expert's prediction is either correct or incorrect at each time step. Given $n$ experts, this can be described by a binary vector $\v\in \B^n$, $\B:=\{0,1\}$, where $v_i=1$ if expert $i$ is correct, and $v_i=0$ otherwise. The player gains $1$ if the expert they chose made a correct prediction, and gains nothing otherwise; thus the player gains $v_i$ if they follow expert $i$. The performance of the player is often measured with the notion of \emph{regret} with respect to each expert, which is the difference between a given expert's gains and the player's gains. We let $x=(x_1,\dots,x_n)\in \R^n$ denote the regret vector with respect to all $n$ experts, so $x_i$ is the regret to the $i^{\rm th}$ expert, or rather, the number of times the $i^{\rm th}$ expert was correct minus the number of times the player was correct. After $T$ steps, the gain of the player and any given expert is at most $T$, and the worst case regret with respect to any given expert is at most $T$. The goal of the player is to minimize their regret with respect to the best performing expert; thus, the player would like to minimize $\max\{x_1,\dots,x_n\}$ at the end of the game. The goal of mathematical and numerical analysis is to compute or approximate the optimal player strategies (i.e., determine which expert the player should follow). There are two standard approaches for how long the game is played; the finite horizon setting, where the game is played for a fixed number of steps $T$, and the geometric horizon setting, where the game ends with probability $\delta>0$ at each step. In the geometric horizon setting, the number of steps of the game is a random variable following the geometric distribution with parameter $\delta$. In this work we focus on the geometric horizon problem, but we expect our techniques to work in the finite horizon setting as well.

In order to proceed further, we need to make a modelling assumption on the experts. In this paper, we follow the convention of a \emph{worst-case} analysis where we assume the experts are controlled by an adversary whose goal is to maximize the player's regret at the end of the game. The adversarial setting yields a two-player zero sum game, and introduces another mathematical problem of determining the optimal strategies for the adversary.  We also focus on the more general setting of \emph{mixed strategies} where the player and adversary both employ randomized strategies. At each step, the player chooses a probability distribution $\alpha$ over the experts $\{1,\dots,n\}$, and the expert followed by the player is drawn independently (from other steps and the adversary's choices) from the distribution $\alpha$. Likewise, the adversary chooses a probability distribution $\beta$ over the binary sequences $\v\in \B^n$, and the experts are advanced by drawing a sample $\v\in\B^n$ from the distribution $\beta$. The problem of determining optimal strategies then boils down to deciding how the player and market should set their probability distributions $\alpha$ and $\beta$ at each time step, given the current regret vector $x\in \R^n$. The goal of the player is to minimize their expected regret, while the adversary's goal is to maximize the expected regret.

It is a difficult problem to determine the optimal strategies for the player and the adversary for a general number of experts $n$ and a large number of steps $T$ in the game. Initial results were established in Cover's original paper \cite{cover1966behavior} for $n=2$ experts, and more recently in \cite{gravin2016towards} for $n=3$ experts. A breakthrough occurred in a series of papers by Drenska and Kohn \cite{drenska2017pde, drenska2020prediction, drenska2020pde}, who took the perspective that the prediction from expert advice problem is a discrete analogue of two-player differential games \cite{friedman2013differential}. They formulated a value function for the game, and showed that as the number of steps $T$ of the game tends to infinity, the rescaled value function converges to the \emph{viscosity solution} $u\in C(\R^n)$ of the degenerate elliptic partial differential equation (PDE)
\begin{equation}\label{eq:pwea}
u(x) - \frac{1}{2}\max_{\v \in \B^n} \v^T\nabla^2u(x)\,\v = \max\{x_1,\dots,x_n\} \ \ \text{for} \ \  x\in\R^n.
\end{equation}
The state variable $x\in \R^n$ of the PDE \eqref{eq:pwea} is the regret vector at the start of the game, and the solution of the PDE $u(x)$ is the worst case regret over all the experts at the end of the game, provided each player plays optimally (and in the limit as $T\to \infty$). Thus, the long-time behavior of the adversarial prediction with expert advice problem, and the corresponding asymptotically optimal strategies for the player and adversary, can be determined by solving a PDE! It turns out that the optimal player strategy is to choose the probability distribution $\alpha(x) = \nabla u(x)$, while the optimal adversarial strategy involves the binary vectors $\v$ saturating the maximum over $\v\in \B^n$ in \eqref{eq:pwea} (we give more detail in Section \ref{sec:background}). 

In this paper, we develop numerical methods for approximating the viscosity solution of \eqref{eq:pwea}, so that we may shed light on the optimal strategies for the player and adversary. There are two challenging aspects of solving \eqref{eq:pwea}. First, the PDE is posed on all of $\R^n$ and does not have any kind of natural restriction to a compact computational domain with boundary conditions. To address this, we prove a localization result for \eqref{eq:pwea} showing that the domain may be restricted to a box $\Omega_T=[-T,T]^n$, and that errors in the Dirichlet boundary condition $u\vert_{\partial \Omega_T}$ \emph{do not} propagate far into the interior of $\Omega$, allowing us to obtain an accurate solution sufficiently interior to $\Omega_T$ (precise results are given later). The second challenge is that the theory is fairly complete for the $n=2,3,4$ expert problems, so the interesting cases to study are in the fairly high dimensional setting of $n\geq 5$ experts, where it is generally difficult to solve PDEs on regular grids or meshes, due to the curse of dimensionality. To overcome this issue, we exploit symmetries in the equation \eqref{eq:pwea}---in particular, permutation invariance of the coordinates $x_1,\dots,x_n$---in order to restrict the computational domain to the sector in $\R^{n-1}$ where $T\geq x_1 \geq x_2 \geq \cdots x_{n-1}\geq x_n = 0$. We develop a numerical method that allows us to restrict all computations to this sector, which has vanishingly small measure compared to the whole box $\Omega_T$. This allows us to numerically solve the PDE \eqref{eq:pwea} on reasonably fine grids up to dimension $n=10$. Based on our numerical results, we make a number of conjectures about optimality of various strategies. We summarize these results in Section \ref{sec:main} after giving  a more thorough description of the background material.

\subsection{Background}\label{sec:background}

Online prediction with expert advice is an example of a sequential decision making problem, and has applications in algorithm boosting \cite{FS}, stock price prediction and portfolio optimization \cite{FS}, self-driving car software \cite{AKT}, and many other problems. The prediction with expert advice problem originated in the works of Cover \cite{cover1966behavior} and Hannan \cite{Hannan}, who provided optimal strategies for the two expert problem. In the intervening years, much attention has been focused on heuristic algorithms that give good performance, but may not be optimal. A commonly used algorithm is the \emph{multiplicative weights algorithm}, in which the player maintains a set of positive weights $w_1,\dots,w_n$ for each expert that are used to form a weighted average of the expert advice, and the weights are updated in real time based on the performance of each expert. For the finite horizon problem, it has been shown \cite{CBL} that the multiplicative weights algorithm is optimal in an asymptotic sense, as the number of experts $n$ and number of steps of the game $T$ \emph{both} tend to infinity, but fails to be optimal when the numbers of experts is fixed and finite, as is the case in practice. Optimal algorithms for the geometric horizon and finite horizon problems for $n \leq 3$ experts were developed and studied in Gravin et al. \cite{gravin2016towards} and Abbasi et al. \cite{ABG}. Additionally, lower bounds for regrets in a broader class than multiplicative weights has been shown \cite{gravin2017tight}.  Further work on algorithms for both for the finite and geometric horizon problems is contained in \cite{ABG, CBL, HKW, LW, CFH, Ro}.  

Recently, attention has shifted back to the problem of optimal strategies for a finite number of experts \cite{gravin2016towards,drenska2017pde,bayraktar2020finite,bayraktar2020malicious,bayraktar2020asymptotic,bayraktar2021prediction,drenska2020pde,drenska2020prediction,drenska2021prediction,calder2021asymp}. The focus of this paper is on the problem with \emph{mixed strategies} (i.e., random strategies) against an adversarial environment that was briefly described in the previous section. We now describe this setting in more detail. We have $n$ experts making predictions, a player who chooses at each step which expert to follow, and an adversarial environment that decides which experts gain or lose at each step of the game. The strategies are \emph{mixed}, so both the player and the adversary choose probability distributions over their possible plays, and their actual plays are random variables drawn from those distributions. At the $k^{\rm th}$ step of the game, the player chooses a probability distribution $\alpha_k$ over the experts $\A_n:=\{1,2,\dots,n\}$ and the adversary chooses a probability distribution $\beta_k$ over the binary sequences $\B^n = \{0,1\}^n$.  Random variables from these distributions $i_k\sim \alpha_k$ and $\v_k\sim \beta_k$ are drawn, and the player chooses to follow expert $i_k$ and the market advances the experts corresponding to the positions of the ones in the binary vector $\v_k$. 

The performance of the player is measured by their regret to each expert, which is the difference between the expert's gains and those of the player. We let $x=(x_1,\dots,x_n)\in \R^n$ denote the regret vector, so $x_i$ is the current regret with respect to expert $i$.  Let us write the coordinates of $\v_k$ as $\v_k=(v_{k,1},v_{k,2},\dots,v_{k,n})$.  Then on the $k^{\rm th}$ step the player accumulates regret of $v_{k,j}-v_{k,i_k}$ with respect to expert $j$. If the regret vector started at $x\in \R^n$ on the first step of the game, then the regret after $T$ steps is 
\[R_T:= x +  \sum_{k=1}^T (\v_k - v_{k,i_k}\one).\]
At the end of the game the regret $R_T$ is evaluated with a payoff function $g:\R^n\to \R$. The player's goal is to minimize $g(R_T)$ while the adversary's goal is to maximize $g(R_T)$. The most commonly used payoff is the maximum regret $g(x) = \max\{x_1,x_2,\dots,x_n\}$, which simply reports the regret of the player with respect to the best performing expert. The game can be played in the finite horizon setting where $T$ is fixed, or the geometric horizon setting where the game ends with probability $\delta>0$ at each step, so $T$ is a random variable. 

We focus on the geometric horizon problem. In this case, the value function $U_\delta:\R^n \to \R$ is defined by
\begin{equation}\label{eq:value}
  U_\delta(x) = \inf_{\alpha}\sup_{\beta}\E\left[ g\left(x +  \sum_{k=1}^T (\v_k - v_{k,i_k}\one) \right)\right],
\end{equation}
where $\alpha=(\alpha_1,\alpha_2,\dots)$, $\beta=(\beta_1,\beta_2,\dots)$, and $T$ is the time at which the game stops, which is a geometric random variable with parameter $\delta$. The value function $U_\delta(x)$ is the expected value of the payoff at the end of the game given that the regret vector starts at $x\in \R^n$ on the first step and both players play optimally. The $\inf$ and the $\sup$ are over \emph{strategies for the players}, which enforce that $\alpha_k$ and $\beta_k$ depend only on the current value of the regret and the past choices of both players. The value function is unchanged by swapping the $\inf$ and $\sup$ in \eqref{eq:value}.

It was shown in \cite{drenska2020prediction,drenska2017pde} that the rescaled value functions
\[u_\delta(x) :=  \sqrt{\delta}\,U_\delta\left( \frac{x}{\delta}\right)\]
converge locally uniformly, as $\delta\to 0$, to the viscosity solution of the degenerate elliptic PDE
\begin{equation}\label{eq:pde}
u -\frac{1}{2}\max_{\v\in B_{n}}\v^T \nabla^2 u \,\v = g \ \ \text{ on } \ \R^n,
\end{equation}
which is the same as \eqref{eq:pwea} except with a general payoff $g$. The PDE \eqref{eq:pde} contains all of the information about the prediction problem in the asymptotic regime where the number of steps $T$ tends to $\infty$, which is equivalent to sending the geometric stopping probability $\delta \to 0$. It was shown in \cite{drenska2020prediction} that the asymptotically optimal player strategy is to use the probability distribution\footnote{As we will see later in the paper, the solution $u$ of \eqref{eq:pde} satisfies $\nabla u \cdot \one = 1$, and is monotonically increasing, so $\alpha_k$ is a probability vector.}
\begin{equation}\label{eq:optimalplayer}
\alpha_k = \nabla u(x),
\end{equation}
where $x\in \R^n$ is the current regret on the $k^{\rm th}$ step of the game, and the optimal adversary strategy is to choose
\begin{equation}\label{eq:optimaladversary}
  \v_k \in \argmax_{\v \in \B^n} \{\v^T \nabla^2 u(x)\v\},
\end{equation}
and then advance the experts in $\v_k$ with probability $\frac{1}{2}$ and those in $\one-\v_k$ with probability $\frac{1}{2}$. Notice that the adversarial strategy $\v$ is equivalent to $\one-\v$. 

Determining the adversary's optimal strategies in the max-regret setting, where we take $g(x)=\max\{x_1,\dots,x_n\}$, is an open problem for a general number of experts $n$. It was conjectured in \cite{gravin2016towards} that the COMB strategy of ordering the experts by regret and using the alternating zeros and ones vector $\v=(0,1,0,1,\dots)$, which resembles the teeth of a comb, is asymptotically optimal as $T\to \infty$ for all $n$, though this was proven in \cite{gravin2016towards} only for $n=2$ and $n=3$ experts. The idea behind the COMB strategy is to group the experts into as equally matched groups as possible and advance one group or the other with equal probability, which makes it difficult for the player to gain any advantage. However, since the experts are ordered by regret $x_1\geq x_2 \geq \cdots \geq x_n$, they are essentially ordered by performance, and so the even numbered experts are slightly worse on average compared to the odd numbered experts. Hence, there is reason to believe that COMB can be improved upon for larger numbers of experts by modifying the COMB vector slightly. 

The PDE perspective developed in \cite{drenska2017pde} can help shed light on the optimal adversarial strategy, since it turns out we can derive \emph{explicit} solutions for the PDE \eqref{eq:pwea} for $n\leq 4$ experts. Below, we present the solutions in the sector
\[\S_n = \{x\in \R^n \, : \, x_1 \geq x_2 \geq \cdots \geq x_n\}.\]
Since the PDE is unchanged under permutations of the coordinates, this completely determines the solution. It was shown in \cite{drenska2017pde} that for $n = 2$ experts the solution of \eqref{eq:pwea} is given in the sector $\S_2$ by
\begin{equation}\label{eq:solution2experts}
u(x) = x_1 + \dfrac{1}{2\sqrt{2}}e^{\sqrt{2}(x_2-x_1)},
\end{equation}
and for $n = 3$, the solution is given in $\S_3$ by
\begin{equation}\label{eq:solution3experts}
u(x) = x_1 + \dfrac{1}{2\sqrt{2}}e^{\sqrt{2}(x_2-x_1)} + \dfrac{1}{6\sqrt{2}}e^{\sqrt{2}(2x_3-x_2-x_1).}
\end{equation}
Since we have explicit solutions, we can check the optimal adversarial strategies via \eqref{eq:optimaladversary} within the sector $\S_n$. For $n=2$ both $\v=(1,0)$ and $\w=(0,1)$ are globally optimal for all $x\in \S_n$, which are both the same COMB strategy since $\v = \one - \w$. For $n=3$, both $(1,0,0)$ and $(0,1,0)$ are globally optimal (as well as a their equivalent strategies $(0,1,1)$ and $(1,0,1)$, which we will omit from now on). The second strategy $(0,1,0)$ is the COMB strategy, while the first $(1,0,0)$ is not.

For $n = 4$ experts, it was shown in \cite{bayraktar2020asymptotic} that the solution of \eqref{eq:pwea} is given by
\begin{equation}\label{eq:solution4experts}
\begin{aligned}
u(x) ={} & x_1 - \dfrac{\sqrt{2}}{4}\text{sinh}(\sqrt{2}(x_1-x_2)) \\
&+ \dfrac{\sqrt{2}}{2}\text{arctan}\left(e^{\frac{x_4+x_3-x_2-x_1}{\sqrt 2}}\right)\cdot\\&\text{cosh}\left(\frac{x_4-x_3+x_2-x_1}{\sqrt 2}\right)\text{cosh}\left(\frac{-x_4+x_3+x_2-x_1}{\sqrt 2}\right)\text{cosh}\left(\frac{-x_4-x_3+x_2+x_1}{\sqrt 2}\right)\\ 
&+ \dfrac{\sqrt{2}}{2}\text{arctanh}\left(e^{\frac{x_4+x_3-x_2-x_1}{\sqrt 2}}\right)\cdot\\&\text{sinh}\left(\dfrac{x_4-x_3+x_2-x_1}{\sqrt 2}\right)\text{sinh}\left(\dfrac{-x_4+x_3+x_2-x_1}{\sqrt 2}\right)\text{sinh}\left(\dfrac{-x_4-x_3+x_2+x_1}{\sqrt 2}\right),
\end{aligned}
\end{equation}
from which it is possible to prove \cite{bayraktar2020asymptotic} that the COMB strategy $(1,0,1,0)$ is optimal, as well as the non-COMB strategy $(0,1,1,0)$. For $n\geq 5$ experts, an explicit solution is unknown, and the question of the optimality of the COMB strategy is open. 

It is worthwhile mentioning that it is remarkable that explicit solutions have been obtained for the degenerate elliptic PDE \eqref{eq:pwea} for $n\leq 4$ dimension. Usually explicit solutions for nonlinear PDE are not available. Furthermore, the solutions given in \eqref{eq:solution2experts}, \eqref{eq:solution3experts}, and \eqref{eq:solution4experts} are all twice continuously differentiable with Lipschitz second partial derivatives, i.e., they are classical $C^{2,1}(\R^n)$ solutions. This is also remarkable, since one would only expect this for \emph{uniformly elliptic} equations (since the right hand side is Lipschitz continuous) and not for degenerate elliptic equations. As far as we are aware, there is no general regularity theory that can explain this.

In fact, the existence of classical solutions of \eqref{eq:pwea} is closely tied to the existence of a globally optimal strategy. As a simple example, for $n=2$ experts the strategy $\v=(1,0)$ is optimal in $\S_2$, and so the PDE \eqref{eq:pwea} reduces to the one dimensional linear equation
\[u - \frac{1}{2}u_{x_1x_1} = x_1 \ \ \text{in} \ \ \S_2.\]
We can integrate this, keeping only the exponentially decaying solution, to obtain
\[u(x) = x_1 + f(x_2) e^{-\sqrt 2 x_1} \ \ \text{on} \ \ \S_2.\]
Using the symmetry $u(x_1,x_2)=u(x_2,x_1)$ we obtain $u_{x_1}=u_{x_2}$ on $\partial \S_2 = \{x\in \R^2 \, : \, x_1=x_2\}$. This yields
\[1 - \sqrt 2 f(x) e^{-\sqrt 2 x} = u_{x_1}(x,x) = u_{x_2}(x,x) = f'(x) e^{-\sqrt 2 x},\]
and so 
\[f(x) = Ce^{-\sqrt 2 x} + \frac{1}{2\sqrt 2}e^{\sqrt 2 x}.\]
Since $u_{x_1}(0)=u_{x_2}(0)=\frac{1}{2}$ we have $f(0) = \frac{1}{2\sqrt 2}$ and so $C=0$. Substituting this above yields the two expert solution $u$ given by \eqref{eq:solution2experts}. Roughly the same procedure can be carried out for $n=3$ and $n=4$ experts, though the $n=4$ case is particularly tedious (see \cite{bayraktar2020asymptotic}).  

The question of whether the COMB strategy is asymptotically optimal for $n\geq 5$ experts is an open problem. Some recent work \cite{chase2019experimental} gives experimental numerical evidence that COMB is \emph{not optimal} for $n=5$ experts. The numerical experiments in \cite{chase2019experimental} simulated the two player game and compared the COMB strategy against the strategy $(1,0,1,0,0)$, the latter appearing to be strictly better. It was shown in \cite{kobzar2020new,kobzar2020newb} that COMB is at least as powerful as the setting of randomly choosing which half of the experts to advance (the so-called Bernoulli strategy). This motivates our work of numerically solving the PDE \eqref{eq:pwea} in order to shed light on the optimality of COMB and other strategies for $n\geq 5$ experts. 

We mention that an analogous parabolic PDE exists for the finite-horizon setting of this problem \cite{drenska2017pde}, motivating a similar treatment in terms of numerics for the parabolic equation as well. Some progress in the parabolic case can be found in \cite{bayraktar2020finite}.  Further, related settings such as prediction against a limited adversary \cite{bayraktar2021prediction} (rather than the optimal adversary being studied in this work) and malicious experts \cite{bayraktar2020malicious} have been studied as well.  A related PDE where $\v$ is chosen just from the standard basis vectors $\{\e_1, \dots, \e_n\}$ has been studied and has a closed-form solution for a general number $n$ of experts \cite{kobzar2020new, kobzar2020newb}.  Additionally, some PDE approaches to online learning problems and neural network-based problems can be found in \cite{wang2021pde}.  We also mention that repeated two-player games also appear in the PDE literature in multiple other settings \cite{KS1,KS2,PS1,PS2,AS1,NS,APSS,LM,calder2020convex,calder2018game,calder2019lip,flores2019algorithms,calder2024consistency}.

\subsection{Main results and conjectures}\label{sec:main}

In Section \ref{sec:numerics} we present the results of numerical experiments solving the prediction with expert advice PDE \eqref{eq:pwea} for $n\leq 10$ experts. We summarize the main results we obtain from the numerical experiments here.  
\begin{enumerate}
\item We have strong numerical evidence that the COMB strategy is not globally optimal for $5 \leq n \leq 10$. This validates the numerical evidence from \cite{chase2019experimental} for $n=5$ experts. 
\item For $n=5$ experts, we have strong numerical evidence that the non-COMB adversarial strategy $(0,1,0,1,1)$, equivalent to $(1,0,1,0,0)$, is the only globally optimal strategy. This is the same strategy that was numerical shown to be better than COMB in \cite{chase2019experimental}.
\item For $6 \leq n \leq 10$ experts, we have strong numerical evidence that there are no globally optimal adversary strategies.
\end{enumerate}

From these numerical results we state a number of conjectures that we leave for future work. 
\begin{conjecture}\label{con:comb}
The COMB strategy is globally asymptotically optimal (i.e., on the sector $\S_n$) only for $n\leq 4$ experts. 
\end{conjecture}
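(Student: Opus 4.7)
The case $n \leq 4$ is settled by the explicit formulas \eqref{eq:solution2experts}, \eqref{eq:solution3experts}, \eqref{eq:solution4experts} together with the optimality verifications in \cite{drenska2017pde, bayraktar2020asymptotic}, so the substance of Conjecture \ref{con:comb} is the non-optimality of COMB for $n \geq 5$. The plan is to exhibit, for each such $n$, a point $x_0\in \S_n$ and a binary vector $\w \in \B^n$ (not equivalent to the COMB vector) satisfying
\[
\w^T \nabla^2 u(x_0)\, \w \;>\; \v_{\text{COMB}}^{\,T} \nabla^2 u(x_0)\, \v_{\text{COMB}},
\]
which by \eqref{eq:optimaladversary} immediately disqualifies COMB as a globally optimal adversarial response.

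For the moderate range $5 \leq n \leq 10$, I would attempt a computer-assisted proof built on the numerical scheme of this paper. The ingredients would be: (i) a sharpened localization estimate refining the domain truncation result of this paper, giving an explicit rate at which errors in the Dirichlet data on $\partial \Omega_T$ decay into the interior; (ii) a quantitative convergence rate $\|u_h - u\|_{L^\infty(K)} \leq C h^\gamma$ on a compact $K\subset\S_n$ containing $x_0$, of the type extractable from the Barles--Souganidis--Jakobsen machinery for monotone consistent stable schemes; (iii) an interior second-difference estimate converting the $L^\infty$ error into a rigorous error bar on the entries of $\nabla^2 u(x_0)$; and (iv) a direct evaluation of $\w^T \nabla^2 u_h(x_0) \w - \v_{\text{COMB}}^{\,T} \nabla^2 u_h(x_0) \v_{\text{COMB}}$ at an $x_0$ suggested by our numerics, where the computed gap exceeds the error bar. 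For $n=5$ I would take $\w=(0,1,0,1,1)$, consistent with \cite{chase2019experimental} and the experiments described later in the paper.

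The infinite tail $n \geq 11$ requires a genuinely analytical step. The most natural route is a \emph{dimension-lift} argument: given a witness $x_0\in \S_n$ for $n$ experts, consider $(x_0,-M)\in \S_{n+1}$ for $M\gg 1$. One expects $u$ to decay exponentially in the last coordinate direction (by analogy with the elementary $n=2$ derivation sketched in the excerpt), so that the $(n{+}1)$-expert solution decouples to leading order into the $n$-expert solution plus an $O(e^{-\sqrt{2}M})$ correction, and the strict inequality survives on a lifted strategy of the form $(\w,0)$ or $(\w,1)$. Combined with a computer-assisted base case at $n=5$ (and perhaps $n=6$, to handle parity), this would close the conjecture by induction on $n$.

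The hard part will be step (ii), the convergence rate: the PDE \eqref{eq:pwea} is only degenerate elliptic (the maximizing $\v$ can be rank one and the set of maximizers can change with $x$), so there is no uniform ellipticity from which to extract interior $C^{2,\alpha}$ estimates, and squeezing a usable rate out of viscosity-solution arguments is delicate. Equally challenging is producing rigorous exponential decay estimates for $u$ across sector boundaries to justify the dimension lift, since the available comparison principles do not immediately give the directional decay one needs. These two technical gaps — validated numerics for a degenerate, nonlocal (max over $\B^n$) elliptic PDE, and quantitative decoupling estimates across sector boundaries — are exactly what this conjecture leaves for future work.
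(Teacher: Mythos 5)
The statement you were given is a \emph{conjecture}, and the paper offers no proof of it: Section \ref{sec:numerics} supplies only non-rigorous numerical evidence (computed optimality ratios on the box $[-1,1]^n$ for $5\leq n\leq 10$), and the paper explicitly leaves the conjecture for future work. So there is no ``paper proof'' for your plan to be compared against; your proposal is strictly more ambitious than what the authors attempt. That said, it is worth flagging where the plan, as a plan, has real gaps beyond the two you already name.

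First, the computer-assisted step (ii) faces a difficulty that is somewhat worse than ``squeezing a rate out of viscosity arguments.'' The convergence rate proved in this paper, Theorem \ref{thm:convergencerate}, requires $[u]_{C^{2,1}}<\infty$, and the paper's own Conjecture \ref{con:6ex} together with the discussion after it suggests that for $n\geq 6$ the solution may \emph{fail} to be $C^{2,1}$ across the interfaces between regions of differing optimal strategy. If that is so, the regularity hypothesis you need to certify entries of $\nabla^2 u(x_0)$ from $\nabla^2_h u_h(x_0)$ may simply be false, and a Barles--Souganidis--type rate will not come for free either, since those rates typically still require some structure (e.g.\ convexity plus a known modulus) that has to be established for this degenerate operator. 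You would also have to choose $x_0$ away from any such interface, which is delicate when the location of the interfaces is itself only known numerically.

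Second, the dimension-lift for $n\geq 11$ rests on an unproved decoupling assertion. The heuristic ``$u$ decays exponentially in the last coordinate direction, by analogy with $n=2$'' is borrowed from a derivation that \emph{uses the explicit solution} \eqref{eq:solution2experts}; for $n\geq 6$ no explicit solution is expected (again Conjecture \ref{con:6ex}), so the analogy has no foundation. More structurally, passing from $n$ to $n+1$ experts enlarges the feasible set in the $\max$ from $\B^n$ to $\B^{n+1}$, doubling the number of candidate directions. To conclude that the $n\times n$ principal block of $\nabla^2 u_{n+1}(x_0,-M)$ converges to $\nabla^2 u_n(x_0)$ as $M\to\infty$, you would need a comparison/stability argument showing that the new directions involving the $(n{+}1)$st coordinate are eventually irrelevant uniformly near $x_0$ -- i.e.\ a quantitative statement that the adversary never finds it profitable to involve an expert whose regret is far below all the others. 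No such estimate is available from the localization result Theorem \ref{thm:localization}, which controls influence of Dirichlet data on a box, not decoupling across sector walls. Without it the induction does not close. You are right that these two items are exactly the technical content missing; the point is that they are not merely ``delicate'' but presently unsupported by anything in the paper's toolbox, and at least one of them (interior $C^{2,1}$ regularity) is conjectured by the authors themselves to be false in the regime you need it.
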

\begin{conjecture}\label{con:5ex}
The non-COMB strategy $(1,0,1,0,0)$ is the only globally asymptotically optimal strategy for $n=5$ experts.
\end{conjecture}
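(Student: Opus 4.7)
The plan is to mirror the derivation used for $n\leq 4$ experts in \cite{drenska2017pde,bayraktar2020asymptotic}, upgraded to the harder five-expert case. First I would assume the conjectured optimizer $\v^\star=(1,0,1,0,0)$ (equivalently $\one-\v^\star=(0,1,0,1,1)$) realizes the $\argmax$ in \eqref{eq:pwea} everywhere on the interior of $\S_5$. Under this assumption the nonlinear PDE collapses to the linear degenerate elliptic equation
\[u - \tfrac12\bigl(u_{x_1x_1} + 2u_{x_1x_3} + u_{x_3x_3}\bigr) = x_1 \quad \text{on } \S_5.\]
The rotation $y_1=(x_1+x_3)/\sqrt2$, $y_2=(x_1-x_3)/\sqrt2$, leaving $x_2,x_4,x_5$ untouched, diagonalizes the principal part to $2u_{y_1y_1}$, turning the equation into an inhomogeneous one-dimensional Helmholtz-type ODE in $y_1$ with parameters $(y_2,x_2,x_4,x_5)$. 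Integrating and discarding the exponentially growing mode, in order to enforce $u(x)-x_1\to 0$ as $x_1\to\infty$, yields a candidate of the form
\[u(x) = x_1 + \sum_{c} A_c(x_2,x_4,x_5)\,\exp\bigl(\sqrt2\,c\cdot x\bigr),\]
with coefficient vectors $c\in\R^5$ and amplitudes $A_c$ still to be determined.

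The free amplitudes $A_c$ are then pinned down by imposing, on each of the four faces $\{x_i=x_{i+1}\}$ of $\partial\S_5$, the matching conditions $\partial_{x_i}u=\partial_{x_{i+1}}u$ forced by the full permutation invariance of the global solution on $\R^5$. Guided by the $n=3,4$ closed forms \eqref{eq:solution3experts}--\eqref{eq:solution4experts}, I would start with a pure-exponential ansatz and only enlarge it to include $\arctan$ and $\arctanh$ of exponentials (as in \eqref{eq:solution4experts}) if the pure-exponential ansatz turns out to be overdetermined. Once a candidate $u\in C^{2,1}(\R^5)$ is produced, the last step is to verify \emph{a posteriori} that for every $x\in\S_5$ the quadratic form $\v\mapsto\v^T\nabla^2u(x)\,\v$ is strictly maximized at $\v^\star$ among the fourteen other nontrivial equivalence classes $\{\v,\one-\v\}\subset\B^5$, which yields both the global optimality and the uniqueness claimed in the conjecture.

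The main obstacle, and the reason the conjecture is open, is the combination of these last two steps. For $n=5$ the matching system on the four boundary faces is substantially more rigid than for $n\leq 4$, and there is no \emph{a priori} reason the natural ansatz should close; if it fails to close, this would indicate that $\v^\star$ is not in fact a global $\argmax$ on all of $\S_5$, and the conjecture would have to be weakened to optimality on an open subregion. Even when a closed form is obtained, proving the strict pointwise comparison $(\v^\star)^T\nabla^2u\,\v^\star > \v^T\nabla^2u\,\v$ amounts to establishing a family of sign-changing inequalities between sums of exponentials and hyperbolic functions that are delicate near $\partial\S_5$, where several strategies become nearly tied---precisely the region where the numerics in Section \ref{sec:numerics} already show the smallest margin. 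A practical hedge is to use the high-resolution numerical solution of Section \ref{sec:numerics} to read off the dominant exponents appearing in $u$ and so narrow the allowable vectors $c$ to a small finite set before attempting the closed-form verification.
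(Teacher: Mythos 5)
The statement you were asked to prove is a \emph{conjecture}, and the paper itself offers no proof of it: Conjecture~\ref{con:5ex} is presented as a hypothesis supported only by the numerical evidence in Section~\ref{sec:numerics}, where the minimum optimality score of strategy $11=(0,1,0,1,1)$ was observed to be within numerical precision of $1$ while all competing strategies were strictly below. The paper explicitly flags the analytic question as open: ``{\bf Open Problem}: Determine an analytic expression for the solution of the PDE \eqref{eq:pwea} for $n=5$,'' adding that such an expression ``would allow one to check the validity of Conjecture~\ref{con:5ex}, as was done for $n=4$ experts in \cite{bayraktar2020asymptotic}.'' So there is no proof in the paper for your attempt to be measured against.

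What you have written is a research plan rather than a proof, and you are candid about this in your closing paragraph. Within those limits, the plan is sound and coincides with the route the paper itself suggests: assume $\v^\star=(1,0,1,0,0)$ saturates the max in \eqref{eq:pwea} throughout $\S_5$, reduce to the linear equation $u-\tfrac12(u_{x_1x_1}+2u_{x_1x_3}+u_{x_3x_3})=x_1$, integrate the resulting one-dimensional Helmholtz ODE in the rotated variable $y_1=(x_1+x_3)/\sqrt2$, pin down the free amplitudes by imposing $\partial_{x_i}u=\partial_{x_{i+1}}u$ on the faces of $\partial\S_5$ (the discrete analogue of permutation invariance), and then verify a posteriori that $\v^\star$ strictly dominates the other $14$ equivalence classes. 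This mirrors exactly the derivation the paper walks through for $n=2$ and attributes to \cite{bayraktar2020asymptotic} for $n=4$. Your rotation and the reduced linear operator are computed correctly.

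The genuine gap --- and you correctly name it --- is that none of the hard steps are carried out: the ansatz is not closed, the amplitudes $A_c$ are not determined, the candidate is not shown to be $C^{2,1}$ across the faces of $\S_5$, and the pointwise strict inequalities $(\v^\star)^T\nabla^2u\,\v^\star>\v^T\nabla^2u\,\v$ are not established. For $n=4$ the last verification already required the nontrivial $\arctan/\operatorname{arctanh}$ structure of \eqref{eq:solution4experts}, and for $n=5$ the four-face matching system is more rigid, so there is no guarantee the ansatz closes. Consequently, the proposal does not constitute a proof; it is a reasonable program that, if completed, would resolve the open problem that the paper itself poses.
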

\begin{conjecture}\label{con:6ex}
There is no globally asymptotically optimal adversary strategy that is constant on the sector $\mathbb{S}_n$ for $n\geq 6$ experts. 
\end{conjecture}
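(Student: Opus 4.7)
The plan is to argue by contradiction: assume some fixed $\v^* \in \B^n$ attains the maximum in \eqref{eq:optimaladversary} at every $x \in \S_n$, derive the linear boundary value problem this forces on $u$, construct its solution, and then exhibit a point $x \in \S_n$ where another binary vector $\w \in \B^n$ yields a strictly larger value of $\w^T \nabla^2 u(x)\,\w$, contradicting global optimality of $\v^*$.

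Under the assumption, \eqref{eq:pwea} collapses on $\S_n$ (where $g(x)=x_1$) to the linear constant-coefficient equation $u - \tfrac{1}{2}(\v^*)^T \nabla^2 u\,\v^* = x_1$, paired with the symmetry conditions $u_{x_i}=u_{x_{i+1}}$ on each face $\{x_i=x_{i+1}\}$ of $\partial\S_n$ inherited from permutation invariance of \eqref{eq:pwea}, and with the growth condition $u(x)-x_1 = O(1)$ as $x$ recedes to infinity inside $\S_n$. The particular solution $x_1$ handles the inhomogeneity; the homogeneous exponentials $e^{\lambda\cdot x}$ are admissible precisely when $\lambda\cdot\v^* = \pm\sqrt{2}$, the growth condition selects those $\lambda$ that decay into the interior of $\S_n$, and the $n-1$ face-symmetry conditions reduce the remaining freedom to a finite linear system for the coefficients $c_\lambda$. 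When $\v^*$ coincides with a known globally optimal strategy for $n\leq 4$ this procedure reproduces \eqref{eq:solution2experts}--\eqref{eq:solution4experts} exactly, which is a useful sanity check.

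With $u$ in hand I would compute $\nabla^2 u$ and tabulate the binary quadratic form $\v^T\nabla^2 u(x)\,\v$ over all $\v\in\B^n$ modulo the equivalence $\v\sim\one-\v$. The hypothesis that $\v^*$ is globally optimal becomes a system of pointwise inequalities on $\S_n$, which I would attempt to falsify for each candidate $\v^*$ by a strategic choice of witness point: natural candidates include the diagonal ray $x_1=\cdots=x_n$ (where symmetry forces many Hessian entries to coincide), the asymptotic regime $x_n\to-\infty$ with the remaining coordinates fixed (which should reduce the question to a resolved lower-$n$ problem and so identify inherited non-optimal competitors), and the points flagged by the numerical experiments of Section \ref{sec:numerics}. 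For each $\v^*$ the goal is to produce a competitor $\w\in\B^n$ and a point $x\in\S_n$ at which $\w^T\nabla^2 u(x)\,\w > (\v^*)^T\nabla^2 u(x)\,\v^*$.

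The main obstacle is the construction step: solving the linear PDE with coupled symmetry conditions on all $n-1$ faces of $\S_n$ is tractable for $n\leq 4$ but suffers combinatorial blow-up for $n\geq 6$, where the admissible set of characteristic exponents is large and the linear system relating the $c_\lambda$'s becomes dense. A realistic workaround is to forgo a fully explicit formula and instead retain only the finitely many leading exponentials contributing at a chosen witness point, then combine the localization estimate established earlier in the paper with rigorous numerical enclosures derived from the grid-based solver of Section \ref{sec:numerics} to certify the strict inequality of the final step on a compact subregion of $\S_n$ while controlling the tail contributions; this would turn the conjecture into a finite, computer-verifiable check for each of the $2^{n-1}$ candidate strategies $\v^*$.
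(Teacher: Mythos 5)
This is stated in the paper as a \emph{conjecture}, not a theorem: the paper supplies no proof, only strong numerical evidence from solving the discretized scheme \eqref{eq:sector_pde} on the sector grid and observing in Figures~\ref{fig:sparse_strat56}--\ref{fig:sparse_strat910} that every candidate strategy's minimum optimality score falls well below one for $n=6,\dots,10$. So there is no paper proof to compare your attempt against; what the paper actually does --- discretize, solve on $\D^+_{n-1}$, and tabulate $\opt(x,\v)$ --- is evidence, not argument.

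Taken on its own terms, your sketch is a sensible program and mirrors the template by which \eqref{eq:solution2experts}--\eqref{eq:solution4experts} were obtained for $n\leq 4$, but two steps carry genuine gaps you have only partially acknowledged. First, the construction: you never establish that the linear boundary value problem on $\S_n$ (constant-coefficient equation in the direction $\v^*$, reflection symmetry $u_{x_i}=u_{x_{i+1}}$ on the $n-1$ faces, linear growth) has a \emph{unique} solution, so even if you exhibit some $\tilde u$ built from admissible exponentials you cannot conclude $\tilde u=u$, which is precisely what the contradiction requires; truncating to "leading exponentials" compounds this, since you give no mechanism to bound the discarded tail rigorously. Second, the fallback to certified numerical enclosures from the grid solver relies on the convergence rate of Theorem~\ref{thm:convergencerate}, which requires $[u]_{C^{k,1}(\R^n)}<\infty$. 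As the paper notes at the end of Section~\ref{sec:main}, that classical regularity is only known for $n\leq 4$, and Conjecture~\ref{con:6ex} itself, if true, suggests $u$ fails to be $C^{2,1}$ for $n\geq 6$ --- so invoking the quantitative rate there is circular absent an independent regularity result. As written, the proposal is a research program consistent with the paper's evidence, not a proof, and the paper does not attempt one either.
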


Recalling our discussion in Section \ref{sec:background} about the connection between explicit solutions of the PDE \eqref{eq:pwea} and the existence of optimal strategies, if Conjecture \ref{con:5ex} is true, then we expect there to exist a classical explicit solution of the $n=5$ expert PDE, similar to the solutions given in \eqref{eq:solution2experts}, \eqref{eq:solution3experts}, and \eqref{eq:solution4experts} for the $n=2,3,4$ expert problems.\\

\noindent {\bf Open Problem}: Determine an analytic expression for the solution of the PDE \eqref{eq:pwea} for $n=5$.\\

An explicit solution for $n=5$ experts would allow one to check the validity of Conjecture \ref{con:5ex}, as was done for $n=4$ experts in \cite{bayraktar2020asymptotic}. If Conjecture \ref{con:6ex} is true, then this strongly suggests that it will be impossible to find an explicit solution of the PDE for $n\geq 6$ experts, and that the solution may fail to be a classical solution in $C^{2,1}(\R^n)$ when $n\geq 6$. If there is no globally optimal strategy, then there will be regions of $\S_n$ corresponding to different optimal strategies $\v\in \B^n$, and the solution may fail to be twice continuously differentiable across these interfaces. 

\begin{remark}
It is important to point out that there are certainly some limitations to our numerical results. In particular, we cannot solve the PDE \eqref{eq:pwea} numerically on the full space $\R^n$, and must restrict our attention to a compact subset. Our numerical results are obtained over the box $[-1,1]^n$. We cannot rule out, for example, that Conjecture \ref{con:5ex} fails somewhere outside of the box $[-1,1]^n$. However, we do find that for $n=2,3,4$ experts, the optimality observed on the box matches perfectly with the global theory. The negative results of Conjectures \ref{con:comb} and \ref{con:6ex} do not suffer the same limitations, since non-optimality need only be observed at a single point. Finally, our numerical convergence rates rely on classical regularity of the viscosity solution, which is only known for $n\leq 4$ experts. 
\end{remark}

\begin{remark}
We also remark that it may be possible to use our techniques for reducing the computational grid to the sector $\S_n$ in combination with the dynamic programming approaches of \cite{gravin2016towards,chase2019experimental}. We leave this interesting direction to future work. 
\end{remark}

\subsection{Outline}

The rest of the paper is organized as follows. In Sections \ref{sec:numericalschemes} and \ref{sec:predictionanalysis} we present and analyze our numerical scheme for solving the prediction PDE \eqref{eq:pwea}. The first part in Section \ref{sec:numericalschemes} is written for a more general class of degenerate elliptic PDEs, while the second part Section \ref{sec:predictionanalysis} contains results that require the specific form of our prediction with expert advice equation. Finally in Section \ref{sec:numerics} we present the results of our numerical experiments that provide the evidence for the conjectures given in Section \ref{sec:main}. 

\section{Analysis of a general numerical scheme}\label{sec:numericalschemes}

We study here a finite difference scheme for the PDE \eqref{eq:pde} consisting of replacing the pure second derivatives $\v^T \nabla^2 u \v$ with finite difference approximations. We work on the grid $\Z_h^n$, where $\Z_h=h\Z$ and $h>0$ is the grid spacing. For a function $u:\Zh\to \R$, we define the discrete gradient as the mapping $\nabla_h:\Zh \times \Zn\to \R$ defined by
\begin{equation}\label{eq:discrete_grad}
\nabla_h u(x,\v) := \frac{u(x + h\v) - u(x)}{h}.
\end{equation}
The discrete Hessian is defined as the mapping $\nabla^2_h u:\Zh\times\Zn \to \R$ given by
\begin{equation}\label{eq:discrete_hessian}
\nabla^2_h u(x,\v) := \frac{u(x + h\v) - 2u(x) + u(x- h\v)}{h^2}.
\end{equation}
We note that
\begin{equation}\label{eq:hessian_grad}
  \nabla^2_h u(x,\v) = \frac{\nabla_h u(x,\v) + \nabla_h u(x,-\v)}{h}.
\end{equation}
Also, the definition of the discrete Hessian leads immediately to the discrete Taylor-type expansions
\begin{equation}\label{eq:discrete_taylor}
\frac{1}{2}(u(x+h\v) + u(x-h\v)) = u(x) + \frac{h^2}{2}\nabla^2_h u(x,\v),
\end{equation}
and 
\begin{equation}\label{eq:discrete_taylor2}
  u(x+h\v) = u(x) - h\nabla_h(u,-\v) + h^2 \nabla^2_h u(x,\v).
\end{equation}

Define $\H_n = \{X:\Z_n \to \R\}$.
We will use the notation $\nabla_h u(x)\in \H_n$ and $\nabla^2_h u(x)\in \H_n$ for the mappings $\v\mapsto \nabla_h u(x,\v)$ and $\v \mapsto \nabla^2 u_h(x,\v)$, respectively. For $u\in C^{k,1}(\R^n)$ with $k=2$ or $k=3$ and any $\v \in \Zn$ we have via Taylor expansion that
\[  \nabla^2_h u(x,\v) = \v^T \nabla^2 u(x) \v + \O(|\v|^{k+1}h^{k-1}).\]

Our discrete approximation of \eqref{eq:pde} is given by
\[u(x) - \frac{1}{2}\max_{\v \in \B^n}\nabla^2_h u(x,\v) = g(x) \ \ \text{for } x\in \Zh.\]

Since our methods are not specific to this equation, we will study a more general class of equations of the form
\begin{equation}\label{eq:general_discrete_pde}
u - F(\nabla^2_h u) = g \ \ \text{on } \Zh,
\end{equation}
where $F:\H_n\to \R$.  The PDE \eqref{eq:pde} is obtained by setting 
\begin{equation}\label{eq:ourF}
F(X) = \frac{1}{2}\max_{\v\in \B^n}X(\v). 
\end{equation}

We need to place a monotonicity assumption on $F$. 
\begin{definition}\label{def:monotonicity}
We say that $F:\H_n\to \R$ is \emph{monotone}  if for all $X,Y\in \H_n$ with $X\leq Y$ we have $F(X) \leq  F(Y)$.
\end{definition}
We note that $X,Y:\H_n\to \R$ are real-valued functions, so $X\leq Y$ means that $X(\v)\leq Y(\v)$ for all $\v\in \Z_n$.  The class of monotone equations of the form \eqref{eq:general_discrete_pde} is closely related to the wide stencil finite difference schemes introduced and studied by Oberman \cite{oberman2006convergent}. The main difference in this section is that we are focused on the unbounded domain $\Zh$, and we are interested in properties of the solution $u$, such as Lipschitzness, convexity, permutation invariance, etc., that hold under certain structure conditions and the source term $g$.

For $X\in \H_n$ we define
\begin{equation}\label{eq:infty}
\|X\|_{N,\infty}= \max_{\substack{\v\in \Zn \\ |\v|_\infty \leq N}}|X(\v)|,
\end{equation}
where $|\v|_\infty = \max_{1\leq i \leq n}|v_i|$. We need to place a condition on the width of the stencil $F$.
\begin{definition}\label{def:Lipschitz}
We say $F:\H_n\to \R$ has \emph{width $N$} if there exists $C>0$ such that for all $X,Y\in \H_n$ we have
\[|F(X) - F(Y)| \leq C\|X - Y\|_{N,\infty}.\]
The smallest such constant $C>0$ is called the \emph{Lipschitz constant}  of $F$ and denoted $\Lip_N(F)$.
\end{definition}
The choice of $F$ given in \eqref{eq:ourF} is monotone and has width $N=1$, with $\Lip_1(F)=\frac{1}{2}$.

\subsection{Existence and uniqueness}\label{sec:exist}

We first establish a comparison principle.
\begin{theorem}\label{thm:discrete_comparison}
Assume $F$ is monotone and has width $N$. Let $u,v:\Zh \to \R$ satisfy
\begin{equation}\label{eq:subsuper}
u- F(\nabla^2_h u) \leq v - F(\nabla^2_h v) \ \ \text{on } \Zh,
\end{equation}
and
\begin{equation}\label{eq:growth_cond}
\lim_{|x|\to \infty} \frac{u(x) - v(x)}{|x|^2 }=0.
\end{equation}
Then $u\leq v$ on $\Zh$.
\end{theorem}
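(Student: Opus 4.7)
The plan is to argue by contradiction via a standard quadratic-penalty maximum-principle computation, tailored to the discrete setting. I would suppose for contradiction that $M := \sup_{\Zh}(u-v) > 0$ and fix $y_0 \in \Zh$ with $u(y_0) - v(y_0) > M/2$. Then I would introduce the penalized difference
\[ w_\epsilon(x) := u(x) - v(x) - \epsilon|x|^2, \qquad \epsilon > 0. \]
The growth condition \eqref{eq:growth_cond} forces $w_\epsilon(x) \to -\infty$ as $|x|\to\infty$, so $w_\epsilon$ attains its supremum over $\Zh$ at some $x_\epsilon \in \Zh$. Choosing $\epsilon$ small enough that $\epsilon|y_0|^2 < M/4$ guarantees $w_\epsilon(x_\epsilon) \geq w_\epsilon(y_0) > M/4$.

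The heart of the argument is a Hessian comparison at $x_\epsilon$. From $w_\epsilon(x_\epsilon \pm h\v) \leq w_\epsilon(x_\epsilon)$ for every $\v \in \Zn$, adding the two inequalities and using the identity $|x + h\v|^2 + |x - h\v|^2 - 2|x|^2 = 2h^2|\v|^2$, I obtain pointwise
\[ \nabla^2_h u(x_\epsilon, \v) \leq \nabla^2_h v(x_\epsilon, \v) + 2\epsilon|\v|^2 \qquad \text{for every } \v\in \Zn. \]
Setting $Q(\v) := |\v|^2$, this reads $\nabla^2_h u(x_\epsilon) \leq \nabla^2_h v(x_\epsilon) + 2\epsilon Q$ in $\H_n$. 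Monotonicity of $F$ (Definition \ref{def:monotonicity}) followed by the Lipschitz-width bound (Definition \ref{def:Lipschitz}) then yields
\[ F(\nabla^2_h u(x_\epsilon)) - F(\nabla^2_h v(x_\epsilon)) \leq 2\epsilon \Lip_N(F)\,\|Q\|_{N,\infty} \leq 2\epsilon n N^2 \Lip_N(F). \]

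I would then insert this into the subsolution inequality \eqref{eq:subsuper} evaluated at $x_\epsilon$:
\[ u(x_\epsilon) - v(x_\epsilon) \leq F(\nabla^2_h u(x_\epsilon)) - F(\nabla^2_h v(x_\epsilon)) \leq 2\epsilon nN^2 \Lip_N(F). \]
Combined with $M/4 < w_\epsilon(x_\epsilon) \leq u(x_\epsilon) - v(x_\epsilon)$, this gives $M/4 < 2\epsilon nN^2\Lip_N(F)$, which is false once $\epsilon$ is taken sufficiently small, producing the desired contradiction.

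The only mildly subtle point I anticipate is calibrating the penalty. A bounded penalty cannot overwhelm the possibly unbounded $u-v$ permitted by \eqref{eq:growth_cond}, while a higher-order penalty $\epsilon|x|^p$ with $p>2$ would contribute a discrete-Hessian correction depending on $x_\epsilon$, and without localizing $x_\epsilon$ that correction cannot be closed off. The quadratic penalty is matched exactly to \eqref{eq:growth_cond}: it guarantees existence of a maximizer, and its discrete Hessian $2\epsilon|\v|^2$ is \emph{independent} of the maximizer's location, which is precisely what the finite-width Lipschitz hypothesis needs to produce a vanishing upper bound on $M$.
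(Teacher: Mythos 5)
Your argument is correct and is essentially the same proof as the paper's. Both use a quadratic penalty calibrated to the growth condition \eqref{eq:growth_cond}, locate a maximizer of the penalized difference, apply the discrete maximum principle to compare second differences at that point, and close the estimate via monotonicity of $F$ together with the finite-width Lipschitz bound; the only difference is presentational, in that the paper shifts $u$ by $\frac{\epsilon}{2}|x|^2+\epsilon N^2+\epsilon$ and shows $u_\epsilon\leq v$ before letting $\epsilon\to 0$, whereas you penalize $u-v$ directly and derive a contradiction with $M/4<2\epsilon nN^2\Lip_N(F)$.
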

\begin{proof}
For $\epsilon>0$ we define
\[u_\epsilon(x) = u(x) - \frac{\epsilon}{2}|x|^2 - \epsilon N^2 - \epsilon.\] 
We claim that $u_\epsilon \leq v$ for all $\epsilon>0$, from which the result follows. Fix $\epsilon>0$ and assume by way of contradiction that $\sup_{\Zh}(u_\epsilon - v) > 0$. By \eqref{eq:growth_cond}, there exists $R>0$, depending on $\epsilon>0$, such that $u_\epsilon(x) < v(x)$ for $|x| >R$. Thus, $u_\epsilon - v$ attains its maximum over $\Zh$ at some $x_0\in \Zh$, and $u_\epsilon(x_0) > v(x_0)$. Since $u_\epsilon(x) - v(x) \leq u_\epsilon(x_0)  - v(x_0)$ for all $x\in \Zh$ we have
\[u_\epsilon(x) - u_\epsilon(x_0) \leq v(x) - v(x_0).\]
It follows that
\begin{align*}
\nabla^2_h u_\epsilon(x_0,\v) &= \frac{1}{h^2}(u_\epsilon(x_0+h\v) - u(x_0) + u_\epsilon(x_0-h\v) - u(x_0)) \\
&\leq \frac{1}{h^2}(v(x_0+h\v) - v(x_0) + v(x_0-h\v) - v(x_0)) = \nabla^2_h v(x_0,\v)
\end{align*}
for all directions $\v\in \Zn$. Since $F$ is monotone we have
\begin{equation}\label{eq:Fmon}
F(\nabla^2_h u_\epsilon(x_0)) \leq F(\nabla^2_h v(x_0)).
\end{equation}

We now compute
\[ \nabla^2_h u_\epsilon (x,\v) = \nabla^2_h u (x,\v) - \epsilon |\v|^2,\]
from which it follows that
\begin{align*}
u_\epsilon(x_0) - F(\nabla^2_h u_\epsilon(x_0)) &= u(x_0) - \frac{\epsilon}{2}|x_0|^2 -\epsilon N^2 - \epsilon - F(\nabla^2_h u(x_0) - \epsilon|\v|^2)\\
&\leq u(x_0) - F(\nabla^2_h u(x_0))  - \frac{\epsilon}{2}|x_0|^2- \epsilon\\
&<  v(x_0) - F(\nabla^2_h v(x_0))\\
&\leq v(x_0) -F(\nabla^2_h u_\epsilon(x_0)),
\end{align*}
where the last line follows from \eqref{eq:Fmon}.  Therefore $u_\epsilon (x_0) \leq v(x_0)$, which is a contradiction.
\end{proof}

Existence of a solution follows from the comparison principle and the Perron method.
\begin{theorem}\label{thm:existence}
Assume $F$ is monotone, has width $N$, and $F(0)=0$. Suppose there exists $C_g>0$ so that
\begin{equation}\label{eq:glinear}
|g(x)| \leq C_g (1 + |x|) \ \ \text{for all } x\in \Zh.
\end{equation}
Then there exists a unique solution $u:\Zh \to \R$ of \eqref{eq:general_discrete_pde} satisfying $\lim_{|x|\to \infty} \frac{u(x)}{|x|^2 }=0$. Furthermore, we have
\begin{equation}\label{eq:ulinear}
|u(x)| \leq C C_g(\Lip_N(F)N^3 + 1 + |x|) \ \ \text{for all } x\in \Zh,
\end{equation}
where $C$ depends only on $n$.
\end{theorem}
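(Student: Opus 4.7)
Uniqueness is immediate. If $u_1,u_2$ are two solutions of \eqref{eq:general_discrete_pde} with $u_i(x)/|x|^2\to 0$, then $u_1-F(\nabla^2_h u_1)=g=u_2-F(\nabla^2_h u_2)$ and Theorem \ref{thm:discrete_comparison} applied in both directions gives $u_1\equiv u_2$. So the real work is existence together with the linear estimate \eqref{eq:ulinear}.

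My strategy for existence is Perron's method, with explicit smooth barriers exploiting that $\sqrt{R^2+|x|^2}$ has bounded Hessian. Set
\[\phi(x)=C_g\sqrt{1+|x|^2}+M,\qquad M:=C_g\bigl(1+\Lip_N(F)\,nN^2\bigr).\]
Since $\phi\in C^\infty(\R^n)$ with $\|\nabla^2\phi\|_{op}\le C_g$, a Taylor expansion gives $|\nabla^2_h\phi(x,\v)|\le C_g|\v|^2\le C_g nN^2$ for $|\v|_\infty\le N$. Combined with $F(0)=0$, the width-$N$ bound yields $|F(\nabla^2_h\phi(x))|\le \Lip_N(F)\,C_g nN^2$. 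Using $\sqrt{1+|x|^2}\ge |x|$, a short computation shows
\[\phi(x)-F(\nabla^2_h\phi(x))\ge C_g|x|+M-\Lip_N(F)\,C_g nN^2\ge C_g(1+|x|)\ge g(x),\]
so $\phi$ is a pointwise supersolution; by the identical argument, $-\phi$ is a subsolution.

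Now run Perron's method on the lattice. Let $\mathcal{S}=\{v:\Zh\to\R:\ -\phi\le v\le\phi \text{ and } v-F(\nabla^2_h v)\le g\}$, which is nonempty since $-\phi\in\mathcal{S}$. Define $u(x):=\sup_{v\in\mathcal{S}}v(x)$. The crucial structural fact is that the pointwise maximum of two subsolutions is again a subsolution: if $v=\max(v_1,v_2)$ and $v(x)=v_1(x)$, then $v\ge v_1$ everywhere forces $\nabla^2_h v(x,\v)\ge\nabla^2_h v_1(x,\v)$ for every $\v$, and monotonicity of $F$ gives $v(x)-F(\nabla^2_h v(x))\le v_1(x)-F(\nabla^2_h v_1(x))\le g(x)$. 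To show $u$ is a subsolution at any fixed $x$, I use the finite stencil: only finitely many lattice points $\{x\pm h\v:\v\in\Z^n,\ |\v|_\infty\le N\}$ affect $F(\nabla^2_h u(x))$. For each $\epsilon>0$ pick $v_{j,\epsilon}\in\mathcal{S}$ with $v_{j,\epsilon}(x_j)>u(x_j)-\epsilon$ at each such point $x_j$, and let $v_\epsilon=\max_j v_{j,\epsilon}\in\mathcal{S}$. Then $v_\epsilon\to u$ uniformly on this finite stencil, so $\|\nabla^2_h v_\epsilon(x)-\nabla^2_h u(x)\|_{N,\infty}\to 0$, and the width-$N$ Lipschitz condition on $F$ lets us pass to the limit in $v_\epsilon(x)-F(\nabla^2_h v_\epsilon(x))\le g(x)$. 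To show $u$ is a supersolution, suppose by contradiction that $u(x_0)-F(\nabla^2_h u(x_0))<g(x_0)$ at some $x_0$. Define $u_\eta=u+\eta\mathbf{1}_{\{x_0\}}$. At $x_0$, $\nabla^2_h u_\eta(x_0,\v)=\nabla^2_h u(x_0,\v)-2\eta/h^2$, so $F$ only decreases and the defect on the left-hand side is at most $\eta(1+2\Lip_N(F)/h^2)$, strictly less than the initial gap for $\eta$ small. At any other point $x$, lifting $u$ upward at $x_0$ only \emph{increases} $\nabla^2_h u_\eta(x,\v)$, hence (by monotonicity) makes $-F(\nabla^2_h u_\eta(x))$ no larger, preserving the subsolution inequality. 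Thus $u_\eta\in\mathcal{S}$ with $u_\eta(x_0)>u(x_0)$, contradicting the definition of $u$. So $u$ solves \eqref{eq:general_discrete_pde}.

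Finally the bound \eqref{eq:ulinear} is read off from $|u|\le\phi\le C_g(1+|x|)+M=C_g(1+|x|)+C_g(1+\Lip_N(F)\,nN^2)$, which is dominated by $CC_g(\Lip_N(F)N^3+1+|x|)$ for a constant $C=C(n)$ (even with room to spare: the argument actually delivers the sharper exponent $N^2$). The main subtlety I expect is not in the barrier construction but in making the Perron upper envelope argument airtight on the lattice; the finite stencil of $F$ is exactly what rescues it, because it reduces the ``simultaneous approximation'' step to taking the max of a bounded number of subsolutions, avoiding any semi-continuous envelope machinery needed in the continuum viscosity setting.
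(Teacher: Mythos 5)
Your proposal is correct and follows essentially the same route as the paper: an explicit smooth barrier built from $\sqrt{1+|x|^2}$, bounded using the width-$N$ Lipschitz property of $F$ and $F(0)=0$, followed by Perron's method with the set of bounded subsolutions, a bump-perturbation at a hypothetical failure point to get the supersolution property, and uniqueness from Theorem \ref{thm:discrete_comparison}. The only small departures are that you verify the subsolution property of the Perron envelope by taking a finite maximum of subsolutions over the width-$N$ stencil (rather than the paper's diagonal-extraction/compactness argument, which leads to the same conclusion), and you track a sharper $N^2$ in the barrier via the intermediate-value form of Taylor's theorem, which still implies the stated $N^3$ bound.
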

\begin{proof}
We define
\[\psi(x) =   \sqrt{1 + |x|^2},\]
and note that $\psi$ is a smooth function with linear growth satisfying
\begin{equation}\label{eq:psi_lower}
\frac{1}{ \sqrt{2}}(1+|x|) \leq \psi(x) \leq  1+|x| \ \ \text{for all } x\in \R^n.
\end{equation}
For any $\v\in \Zn$ and $x\in \Zh$ we have
\begin{align*}
|\nabla^2_h \psi(x,\v)| &\leq |\nabla^2_h \psi(x,\v) - \v^T \nabla^2 \psi(x)\v | + C|\v|^2\\
&\leq |\nabla^2_h \psi(x,\v) - \v^T \nabla^2 \psi(x)\v | + C|\v|^2\\
&\leq C\left(|\v|^3h+ |\v|^2\right) \leq C |\v|^3,
\end{align*}
where we used a Taylor series expansion of $\nabla^2_h \psi(x,\v)$ in the last line, along with $h\leq 1$ and $|\v|\geq 1$.  Since $F(0)=0$ we have
\begin{equation}\label{eq:psibound}
|F(\nabla^2_h\psi)|  = |F(\nabla^2_h\psi) - F(0)|\leq \Lip_N(F) \|\nabla^2_h \psi\|_{N,\infty}\leq C\Lip_N(F)N^3=:\xi.
\end{equation}
We now define
\[w = C_g(\xi +  \sqrt{2} \psi).\]
Then by \eqref{eq:psi_lower} and \eqref{eq:psibound}  we have
\[  w - F(\nabla^2_h w) = C_g\xi +  \sqrt{2}C_g \psi - F(C_g \nabla^2_h \psi) \geq  \sqrt{2}C_g \psi \geq C_g(1+|x|) \geq g.\]
A similar argument shows that $v=-w$ satisfies $v - F(\nabla^2_h v) \leq g$. 

Define
\begin{equation}\label{eq:perronset}
\F = \left\{v:\Zh \to \R \, : \, v - F(\nabla^2_h v) \leq g \ \text{and}  \ v \leq w\right\}
\end{equation}
and
\begin{equation}\label{eq:perronfunc}
u(x) = \sup\{v(x)\, : \, v \in \F\}.
\end{equation}
Since $v=-w$ belongs to $\F$, the set $\F$ is nonempty and we have $-w \leq u \leq w$. Hence, $u$ satisfies  \eqref{eq:ulinear}. 

We now claim that
\[u - F(\nabla^2_h u) \leq g \ \ \text{on } \Z_h.\]
To see this, fix $x_0 \in \Zh$ and let $v_k\in \F$ such that $\lim_{k\to \infty}v_k(x_0)=u(x_0)$. By passing to a subsequence, if necessary, we can assume that $\lim_{k\to \infty}v_k(x)$ exists for all $x\in \Zh$. Let us denote $v(x):=\lim_{k\to\infty}v_k(x)$, noting that $v(x_0)=u(x_0)$. By continuity we have $v - F(\nabla^2_h v) \leq g$ and $v \leq w$, thus $v\in \F$ and $v\leq u$. Since $v(x_0)=u(x_0)$, we have that $v-u$ attains its maximum at $x_0$. As in the proof of Theorem \ref{thm:discrete_comparison} we have $\nabla^2_hv(x_0,\v) \leq \nabla^2_h u(x_0,\v)$ for all $\v$, and so by the monotonicity of $F$ we have $F(\nabla^2_h v(x_0)) \leq F(\nabla^2_h u(x_0))$, which when combined with $u(x_0)=v(x_0)$ and $v - F(\nabla^2_h v)\leq g$, establishes the claim.

To complete the proof, we show that 
\[u - F(\nabla^2_h u) \geq g \ \ \text{on } \Zh.\]
Assume to the contrary that there is some $x_0\in \Zh$ such that
\[u(x_0) - F(\nabla^2_h u(x_0)) < g(x_0).\]
Define
\[v(x) = 
\begin{cases}
u(x_0) + \epsilon,& \text{if } x=x_0\\
u(x),& \text{otherwise.} 
\end{cases}\]
By continuity we can choose $\epsilon>0$ small enough so that 
\[v(x_0) - F(\nabla^2_h v(x_0)) \leq g(x_0).\]
For $x\neq x_0$, we already have
\[v(x) - F(\nabla^2_h v(x)) \leq g(x),\]
and the definition of $v$ and the monotonicity of $F$ imply that $\nabla^2 v(x,\v) \geq \nabla^2_h u(x,\v)$ for all $x\neq x_0$ and $\v\in \Zn$. This completes the proof.
\end{proof}

\subsection{Properties of solutions}\label{sec:properties}

We recall that a function $u:\Zh \to \R$ is \emph{Lipschitz continuous} if there exists $C>0$ such that
\[|u(x) - u(y)| \leq C\|x-y\|\]
holds for all $x,y  \in \Zh$. The \emph{Lipschitz constant} of $u$, denoted $\Lip(u)$, is the smallest such constant, given by
\[\Lip(u) = \sup_{\substack{x,y\in \Zh \\ x\neq y}} \frac{|u(x)-u(y)|}{\|x-y\|}.\]
\begin{lemma}[Basic properties]\label{lem:basicprop}
  Assume $F$ is monotone, has width $N$, and satisfies $F(0)=0$. Assume $g$ satisfies \eqref{eq:glinear} and let $u:\Zh\to \R$ be the unique solution of \eqref{eq:general_discrete_pde}. The following hold.
  \begin{enumerate}[\normalfont(i)]
    \item If $g$ is Lipschitz then so is $u$, and $\Lip(u) \leq \Lip(g)$.
    \item If $F\geq 0$ then $u\geq g$ on $\Zh$.
    \item There exists a constant $C>0$ such that
    \begin{equation}\label{eq:utog}
      \|u - g\|_\infty \leq C\Lip(g)\left( N\sqrt{\Lip_N(F)} + h\right).
    \end{equation}
  \end{enumerate}
\end{lemma}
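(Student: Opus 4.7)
Parts (i) and (ii) are short. For (i), fix $y \in \Zh$ and set $v(x) := u(x+y) + \Lip(g)\|y\|$. Translation invariance of the finite-difference operator gives $\nabla^2_h v(x,\cdot) = \nabla^2_h u(x+y,\cdot)$, so Lipschitzness of $g$ yields
\[
v(x) - F(\nabla^2_h v(x)) \;=\; g(x+y) + \Lip(g)\|y\| \;\geq\; g(x) \;=\; u(x) - F(\nabla^2_h u(x)).
\]
Both $u$ and $v$ have linear growth by \eqref{eq:ulinear}, so the growth hypothesis \eqref{eq:growth_cond} holds for $u-v$, and Theorem \ref{thm:discrete_comparison} gives $u(x) \leq u(x+y) + \Lip(g)\|y\|$. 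Swapping $x$ and $x+y$ produces the reverse inequality, proving $\Lip(u)\leq \Lip(g)$. For (ii), rearranging the equation yields $u = g + F(\nabla^2_h u)$, which is $\geq g$ whenever $F \geq 0$.

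For (iii), the plan is a mollification barrier. Extend $g$ to all of $\R^n$ preserving the Lipschitz constant (McShane--Whitney), and convolve with a smooth mollifier at scale $\delta > 0$ to get $g_\delta \in C^\infty(\R^n)$ with $\|g - g_\delta\|_\infty \leq \Lip(g)\,\delta$ and $\|\nabla^2 g_\delta\|_\infty \leq C\Lip(g)/\delta$. The integral form of Taylor's theorem gives
\[
\nabla^2_h g_\delta(x,\v) \;=\; \frac{1}{h^2}\int_{-h}^{h} (h-|s|)\, \v^T \nabla^2 g_\delta(x+s\v)\, \v \, ds,
\]
so $|\nabla^2_h g_\delta(x,\v)| \leq C\Lip(g)|\v|^2/\delta$. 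Combined with $F(0)=0$ and the width condition (Definition \ref{def:Lipschitz}),
\[
|F(\nabla^2_h g_\delta(x))| \;\leq\; \Lip_N(F)\,\|\nabla^2_h g_\delta(x)\|_{N,\infty} \;\leq\; C N^2 \Lip_N(F)\,\Lip(g)/\delta.
\]
Setting $\eta := \Lip(g)\delta + C N^2 \Lip_N(F)\Lip(g)/\delta$, the barrier $w = g_\delta + \eta$ then satisfies $w - F(\nabla^2_h w) \geq g$ and has linear growth, so Theorem \ref{thm:discrete_comparison} gives $u \leq w \leq g + 2\eta$; the symmetric sub-barrier $g_\delta - \eta$ gives $u \geq g - 2\eta$. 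Optimizing with $\delta \sim N\sqrt{\Lip_N(F)}$ produces $\eta \leq C\Lip(g)\, N\sqrt{\Lip_N(F)}$.

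The main obstacle is obtaining a sharp bound for $\|\nabla^2_h g_\delta\|_{N,\infty}$: the naive bound via Lipschitzness of $g_\delta$ only yields $O(\Lip(g)/h)$, which would destroy the optimization, so one must go through the continuous Hessian of $g_\delta$ via Taylor's theorem. The extra $+h$ term in \eqref{eq:utog} appears to absorb the regime in which the optimal mollification scale $N\sqrt{\Lip_N(F)}$ falls below the grid spacing, where one is forced to take $\delta = \max(N\sqrt{\Lip_N(F)},\, h)$ and pays an additional $\Lip(g)\,h$ mollification error.
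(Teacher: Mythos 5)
Your arguments for (i) and (ii) match the paper's: (i) uses translation comparison (you build a supersolution $u(\cdot+y)+\Lip(g)\|y\|$, the paper a subsolution $u(\cdot+z)-\Lip(g)\|z\|$, which is the same thing up to a sign), and (ii) is the one-line rearrangement.

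For (iii), you take a genuinely different route. The paper extends $g$ from $\Zh$ to $\R^n$ by a \emph{piecewise constant} extension $\bar g$, convolves, and bounds $\nabla^2_h g_\eps$ by a purely discrete computation: it writes $\nabla^2_h g_\eps(x,\v)$ as a convolution of $\nabla_h\eta_\eps$ against $\nabla_h \bar g$, using the two change-of-variables identities for $\nabla_h g_\eps(x,\pm\v)$ to move one difference quotient onto the mollifier. You instead extend $g$ to a globally Lipschitz function via McShane--Whitney, get $\|\nabla^2 g_\delta\|_\infty\leq C\Lip(g)/\delta$ directly, and then pass to the discrete Hessian through the Taylor integral identity. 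Both are sound, and yours is arguably the cleaner argument because it keeps the mollified barrier genuinely $C^2$. One consequence worth noting: your speculation about the role of the $+h$ is not quite right. In your Lipschitz-extension proof the Taylor estimate $|\nabla^2_h g_\delta(x,\v)|\leq C\Lip(g)|\v|^2/\delta$ holds for \emph{every} $\delta>0$, so there is no obstruction to taking $\delta=N\sqrt{\Lip_N(F)}$ even when this is below $h$, and your argument in fact yields the sharper bound $\|u-g\|_\infty\leq C\Lip(g)N\sqrt{\Lip_N(F)}$ with no $+h$ term at all. In the paper's proof the $+h$ has a concrete source: the piecewise-constant extension incurs $|\bar g(y)-g(x)|\leq \Lip(g)(|y-x|+\sqrt{d}\,h)$, so the mollification error $\|g_\eps-g\|_\infty\leq\Lip(g)(\eps+\sqrt{d}\,h)$ carries an unavoidable $\O(h)$ floor. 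The $+h$ is thus an artifact of that particular extension, not a regime-matching necessity; your approach removes it, and of course a sharper estimate trivially implies \eqref{eq:utog}, so the lemma is proved either way.
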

\begin{proof}
  (i) Let $z\in \Zh$ and define $w(x) = u(x+z) - \Lip(g) \|z\|$. Then we have
  \[w(x) - F(\nabla^2_h w(x)) = u(x+z) - \Lip (g) \|z\| - F(\nabla^2_h u(x+z)) = g(x+z) - \Lip(g)  \leq g(x).\]
  By the comparison principle, Theorem \ref{thm:discrete_comparison}, we have $w \leq u$, and so
  \[u(x+z) - u(x) \leq \Lip(g)\|z\|.\]
  Since this holds for all $x,z\in \Zh$, the proof of (i) is complete.

  (ii) If $F\geq 0$ then $u(x) = g(x) + F(\nabla^2_h u(x)) \geq g(x)$.

  (iii) Let  $\bar{g}:\R^n\to \R$ be the piecewise constant extension of $g$ to a function on $\R^n$, defined so that $\bar{g}(y)=g(x)$ for all $y\in x+[0,h)^n$ and any $x\in Zh$. While the extension is discontinuous, it satisfies
  \[|\nabla_h \bar{g}(y,\v)| = \left| \frac{g(x+h\v) - g(x)}{h}\right| \leq \Lip(g)|\v|\]
  provided $\v\in \Zn$, where $x\in \Zh$ satisfies $y\in x+[0,h)^n$.   Let $\eps>0$ and define the standard mollification $g_\eps:=\eta_\eps * \bar{g}$, where $\eta_\epsilon(x) = \frac{1}{\epsilon^d}\eta\left( \frac{x}{\epsilon}\right)$, and $\eta_\epsilon$ is compactly supported in $B(0,\epsilon)$.  Note that
  \[\nabla_h g_\epsilon(x,\v) = \int_{\R^n} \eta_\eps(y) \nabla_h \bar{g}(x-y,\v) \, dy = \int_{\R^n }\eta_\eps(x-z) \nabla_h \bar{g}(z,\v) \, dz,\]
  where we used the change of variables $z=x-y$ above. We also have
  \[\nabla_h g_\epsilon(x,-\v) = \int_{\R^n} \eta_\eps(y) \nabla_h \bar{g}(x-y,-\v) \, dy = -\int_{\R^n }\eta_\eps(x-z - h\v) \nabla_h \bar{g}(z,\v) \, dz,\]
where we now use the change of variables $z=x-y - h\v$.  Therefore
\begin{align*}
  \nabla^2_h g_\epsilon(x,\v) &= \frac{1}{h}(\nabla_h \bar{g}_\epsilon(x,\v) + \nabla_h \bar{g}_\epsilon(x,-v)) \\
  &= -\int_{\R^n}\nabla_h \eta_\epsilon(x-z,-\v) \nabla_h \bar{g}(z,\v)\, dz.
\end{align*}
Therefore
  \[    |\nabla^2_h g_\epsilon(x,\v)| \leq \int_{B(x,\epsilon)\cup B(x-h\v,\epsilon)}|\nabla_h \eta_\epsilon(x-z,-\v)| |\nabla_h \bar{g}(z,\v)|\, dz \leq \frac{C}{\epsilon}\Lip(g)|\v|^2.\]
  Since $F$ has width $N$, it follows that
  \begin{equation}\label{eq:hessbound}
    |F(\nabla^2_h g_\epsilon(x))| = |F(\nabla^2_h g_\epsilon(x)) - F(0)| \leq \Lip_N (F)\|\nabla^2_h g_\epsilon\|_{N,\epsilon} \leq \frac{C}{\epsilon}\Lip_N(F)\Lip(g)N^2.
\end{equation}
  We also have that $|\bar{g}(y) - g(x)| \leq |y-x| +  \sqrt{d}h$, and so 
\begin{align*}
  |g_\epsilon(x) - g(x)| &= \left|\int_{B(x,\epsilon)} \eta_\epsilon(x-y)( \bar{g}(y) - g(x))\, dy  \right|\\
  &\leq \int_{B(x,\epsilon)} \eta_\epsilon(x-y)|\bar{g}(y) - g(x)|\, dy\\
  &\leq \Lip(g)(\epsilon +  \sqrt{d}h).
\end{align*}
Combining this with \eqref{eq:hessbound} we have 
  \[|g_\eps(x) + F(\nabla ^2 g_\eps(x)) - g(x)| \leq  \Lip(g)(\epsilon +  \sqrt{d}h) +  \frac{C}{\epsilon}\Lip_N(F)\Lip(g)N^2.\]
  Choosing $\epsilon=N \sqrt{\Lip_N(F)}$ yields
  \[|g_\eps(x) + F(\nabla ^2 g_\eps(x)) - g(x)| \leq C\Lip(g)\left( N\sqrt{\Lip_N(F)} + h\right).\]
  By the comparison principle, Theorem \ref{thm:discrete_comparison}, we have
  \[g_\epsilon - C\Lip(g)\left( N\sqrt{\Lip_N(F)} + h\right) \leq u \leq g_\epsilon + C\Lip(g)\left( N\sqrt{\Lip_N(F)} + h\right),\]
  which completes the proof.
\end{proof}

To study further properties of $u$, we need some additional definitions.
\begin{definition}\label{def:Fconvex}
We say that $F:\H_n\to \R$ is \emph{convex}  if for all $X,Y\in \H_n$ and $\lambda\in [0,1]$ we have
\[F(\lambda X + (1-\lambda)Y) \leq \lambda F(X) + (1-\lambda)F(Y).\]
\end{definition}
\begin{definition}\label{def:uconvex}
We say that $u:\Zh\to \R$ is \emph{convex} if  $\nabla^2_h u(x) \geq 0$ for all $x\in \Zh.$
\end{definition}
We note that by \eqref{eq:discrete_taylor}, the convexity of $u$ is equivalent to the inequality
\[u(x) \leq \frac{1}{2}(u(x+h\v) + u(x-h\v))\]
holding for all $x\in \Zh$ and $\v \in \Zn$. We also note that the choice of $F$ given in \eqref{eq:ourF} is convex.

We also consider a permutation invariance property.
\begin{definition}\label{def:permutationinvariance}
  We say $u:\Zh \to \R$ is \emph{permutation invariant} if $u \circ \sigma = u$ for all permutations $\sigma$ on $\{1,\dots,n\}$.
\end{definition}
\begin{definition}\label{def:Fpermutationinvariance}
  We say $F:\H_n \to \R$ is \emph{permutation invariant} if $F(X\circ \sigma)=F(X)$ for all $X\in \H_n$ and all permutations $\sigma$ on $\{1,\dots,n\}$.
\end{definition}
We note that $F$ given in \eqref{eq:ourF} is permutation invariant. 

Finally, we also study translation properties.
\begin{definition}\label{def:translationprop}
  For $\v\in \Zn$, we say $u:\Zh\to \R$ satisfies the \emph{$\v$-translation property} if there exists a constant $c_\v$ such that
\begin{equation}\label{eq:translation}
  u(x+s\v) = u(x) + c_\v s
\end{equation}
for all $x\in \Zh$ and $s\in \R$ such that $s\v \in \Zh$.
\end{definition}

The next lemma shows that these properties for $u$ are inherited from $F$ and $g$.
\begin{lemma}\label{lem:otherprop}
Assume $F$ is monotone, has width $N$, and satisfies $F(0)=0$. Assume $g$ satisfies \eqref{eq:glinear} and let $u:\Zh\to \R$ denote the unique solution of \eqref{eq:general_discrete_pde}. The following hold:
\begin{enumerate}[\normalfont(i)]
\item If $F$ and $g$ are convex, then $u$ is convex.
\item If $F$ and $g$ are permutation invariant, then $u$ is permutation invariant.
\item If $g$ satisfies the $\v$-translation property with constant $c_\v$, then $u$ does as well.
\end{enumerate}
\end{lemma}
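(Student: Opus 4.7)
The plan is to prove all three parts by the same strategy: build a comparison function out of $u$ that carries the desired structural property, verify it is a sub- or super-solution of the same discrete equation, and appeal to the comparison principle (Theorem \ref{thm:discrete_comparison}) or uniqueness (Theorem \ref{thm:existence}). The growth hypothesis in the comparison principle will be free each time, since by \eqref{eq:ulinear} the solution $u$ has at most linear growth, so the same holds for every translate/average/permutation of $u$, giving $(u-w)/|x|^2 \to 0$ automatically.

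For (i), fix $\v \in \Zn$ and define $w(x) = \tfrac{1}{2}(u(x+h\v) + u(x-h\v))$. I will show $u \le w$; combined with \eqref{eq:discrete_taylor} this means $\nabla^2_h u(x,\v) \ge 0$ for every $\v$, i.e., $u$ is convex in the sense of Definition \ref{def:uconvex}. A direct computation gives
\begin{equation*}
\nabla^2_h w(x,\w) = \tfrac{1}{2}\bigl(\nabla^2_h u(x+h\v,\w) + \nabla^2_h u(x-h\v,\w)\bigr),
\end{equation*}
so $\nabla^2_h w(x) = \tfrac{1}{2}(\nabla^2_h u(x+h\v) + \nabla^2_h u(x-h\v))$ as elements of $\H_n$. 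Convexity of $F$ then yields $F(\nabla^2_h w(x)) \le \tfrac{1}{2}(F(\nabla^2_h u(x+h\v)) + F(\nabla^2_h u(x-h\v)))$, which together with the equation for $u$ and the convexity of $g$ (i.e., $g(x) \le \tfrac12(g(x+h\v)+g(x-h\v))$) gives
\begin{equation*}
w(x) - F(\nabla^2_h w(x)) \;\ge\; \tfrac{1}{2}\bigl(g(x+h\v) + g(x-h\v)\bigr) \;\ge\; g(x) \;=\; u(x) - F(\nabla^2_h u(x)).
\end{equation*}
An application of Theorem \ref{thm:discrete_comparison} then gives $u \le w$.

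For (ii), let $\sigma$ be a permutation of coordinates and set $v(x) = u(\sigma(x))$. The discrete Hessian of $v$ satisfies $\nabla^2_h v(x,\w) = \nabla^2_h u(\sigma(x), \sigma(\w))$, so $\nabla^2_h v(x) = \nabla^2_h u(\sigma(x)) \circ \sigma$. Permutation invariance of $F$ (Definition \ref{def:Fpermutationinvariance}) kills the $\sigma$, giving $F(\nabla^2_h v(x)) = F(\nabla^2_h u(\sigma(x)))$, and permutation invariance of $g$ gives $g(\sigma(x)) = g(x)$. Hence $v - F(\nabla^2_h v) = g$ on $\Zh$. Since $v$ has the same linear growth as $u$, uniqueness from Theorem \ref{thm:existence} yields $v = u$.

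For (iii), fix $s_0 \in \R$ with $s_0 \v \in \Zh$ and define $w(x) = u(x + s_0 \v) - c_\v s_0$. Translation of the argument preserves the discrete Hessian, $\nabla^2_h w(x,\w) = \nabla^2_h u(x + s_0\v, \w)$, so using the $\v$-translation property of $g$ we compute $w(x) - F(\nabla^2_h w(x)) = g(x+s_0\v) - c_\v s_0 = g(x)$. Again $w$ has linear growth, so Theorem \ref{thm:existence} gives $w = u$, which is the desired identity \eqref{eq:translation} for that particular $s_0$; since $s_0$ was arbitrary (subject to $s_0\v \in \Zh$), the translation property follows.

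The only mildly subtle point is the convexity computation in (i), where one has to be careful that the inequality from convexity of $F$ points in the direction compatible with the comparison principle (i.e., that $w$ is a super-solution, not a sub-solution); the rest is a clean application of uniqueness, with the linear-growth bound \eqref{eq:ulinear} ensuring the hypothesis \eqref{eq:growth_cond} holds throughout.
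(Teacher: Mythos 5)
Your proof is correct and follows essentially the same route as the paper's: part (i) via the comparison principle applied to the symmetric average $w(x)=\tfrac12(u(x+h\v)+u(x-h\v))$, and parts (ii)--(iii) via uniqueness applied to the composed/translated function. You are a bit more careful than the paper in spelling out why the growth hypothesis \eqref{eq:growth_cond} is satisfied (via the linear bound \eqref{eq:ulinear}), and in part (ii) you correctly write the right-hand side as $g$ (the paper's line reads ``$=0$'', which is a typo for $=g$); otherwise the arguments coincide.
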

\begin{proof}
(i)
Let $\v\in \Zn$ and define
\[w(x) = \frac{1}{2}(u(x+h\v) + u(x-h\v)).\]
Then for $x\in \Zh$ we compute, using the convexity of $F$, that
\begin{align*}
  w(x) - F(\nabla^2_h w(x)) &= \frac{1}{2}u(x+h\v) + \frac{1}{2}u(x-h\v) - F\left( \frac{1}{2}\nabla^2_h u(x+h\v) + \frac{1}{2}\nabla^2_h u(x-h\v)\right)\\
  &\geq  \frac{1}{2}u(x+h\v) + \frac{1}{2}u(x-h\v) - \frac{1}{2}F(\nabla^2_h u(x+h\v)) - \frac{1}{2}F(\nabla^2_h u(x-h\v)) \\
  &= \frac{1}{2}g(x+h\v) + \frac{1}{2}g(x-h\v)\\
  &= \frac{h^2}{2}\nabla^2_h g(x,\v) + g(x)\\
  &\geq g(x),
\end{align*}
where the last line follows from the convexity of $g$. By Theorem \ref{thm:discrete_comparison} we have $w \geq u$, and so
  \[\nabla^2_h u(x,\v) = \frac{2}{h^2}(w(x) - u(x)) \geq 0\]
  for all $x\in \Zh$. Since $\v\in \Zn$ is arbitrary, we have $\nabla^2_h u(x)\geq 0$ for all $x\in \Zn$, hence $u$ is convex.

(ii)
  Let $\sigma$ be a permutation and set $w = u\circ \sigma$. Then we have
  \[w(x) - F(\nabla^2_h w(x)) = u(\sigma(x)) - F(\nabla^2_h (u\circ \sigma)(x)).\]
  We note that for any $\v\in \Zn$ we have  $\nabla^2_h (u\circ \sigma)(x,\v) = \nabla^2_h u(\sigma(x),\sigma(\v))$. Therefore $\nabla^2_h (u\circ \sigma)(x) = \nabla^2_h u(\sigma(x)) \circ \sigma$. Since $F$ is permutation invariant we have
  \[F(\nabla^2_h (u\circ \sigma)(x)) = F(\nabla^2_h u(\sigma(x))\circ \sigma) = F(\nabla^2_h u(\sigma(x))).\] 
  Therefore
  \[w(x) - F(\nabla^2_h w(x)) = u(\sigma(x)) - F(\nabla^2_h u(\sigma(x))) = 0.\]
  By uniqueness of the solution $u$ of \eqref{eq:general_discrete_pde} we have $w=u$, which completes the proof.

(iii)
  Let $s\in \R$ such that $s\v \in \Zh$, and define $w(x) = u(x + s\v) - c_\v s$. Then 
  \[w(x) - F(\nabla^2_h w(x)) = u(x+s\v) - c_\v s - F(\nabla^2_h u(x+s\v)) = g(x+s\v) - c_\v s = g(x).\]
  Since the solution of \eqref{eq:general_discrete_pde} is unique, we have $w=u$, which completes the proof.
\end{proof}

\subsection{Convergence rates}\label{sec:convergence}

In this section, we prove convergence of the numerical scheme \eqref{eq:general_discrete_pde} towards the viscosity solution of the second order degenerate elliptic equation
\begin{equation}\label{eq:gpde}
  u - F(\nabla^2 u) = g \ \ \text{ on } \ \R^n.
\end{equation}
As before, we assume $F:\H_n\to \R$, and we interpret $F(X)$ for an $n\times n$ symmetric matrix $X$ as
\[F(X) := F(\v \mapsto \v^T X\v).\]
We recall the definitions of viscosity solutions in Appendix \ref{sec:defviscosity}. Throughout this section we will use the notation $\usc(\O)$ (resp.~$\lsc(\O)$) for the set of functions that are upper (resp.~lower) semicontinuous at all points in $\O \subset \R^n$.  For more details on the theory of viscosity solutions, we refer the reader to the user's guide \cite{crandall1992user} and \cite{calderviscosity}.

Existence and uniqueness of a linear growth viscosity solution to \eqref{eq:gpde} is standard material for viscosity solutions, and proofs can be found in \cite{crandall1992user,calderviscosity}. For use later on, we recall the comparison principle for \eqref{eq:gpde} in Lemma \ref{lem:ubcomp} below. The proof of this result is standard in viscosity solution theory; a self-contained proof can be found in \cite[Lemma 12.17]{calderviscosity}. 
\begin{lemma}\label{lem:ubcomp}
Assume $F$ is uniformly continuous, monotone, has width $N$, and satisfies $F(0)=0$. Let $u\in \usc(\R^n)$ be a viscosity subsolution of \eqref{eq:gpde} and let $v \in \lsc(\R^n)$ be a viscosity supersolution of \eqref{eq:gpde}. If 
\begin{equation}\label{eq:growthcond}
\lim_{|x|\to \infty}\frac{u(x)-v(x)}{|x|^2}=0
\end{equation}
then $u\leq v$ on $\R^n$.
\end{lemma}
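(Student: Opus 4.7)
The plan is the classical doubling of variables argument from viscosity solution theory, combined with a quadratic penalty to compensate for the unboundedness of $\R^n$. I argue by contradiction, assuming $M := \sup_{\R^n}(u-v) > 0$. To localize, I fix a small $\eta > 0$ and consider
\[\Phi_{\epsilon,\eta}(x,y) := u(x) - v(y) - \frac{1}{2\epsilon}|x-y|^2 - \eta(1+|x|^2) - \eta(1+|y|^2).\]
The sub-quadratic growth condition \eqref{eq:growthcond} forces $\Phi_{\epsilon,\eta}(x,y) \to -\infty$ as $|x|+|y|\to \infty$, so by the upper/lower semicontinuity of $u$ and $v$, $\Phi_{\epsilon,\eta}$ attains its maximum at some $(x_\epsilon, y_\epsilon)$. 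Choosing a near-maximizer $\bar z$ of $u-v$ with $u(\bar z) - v(\bar z) \geq 3M/4$ and then taking $\eta$ small enough that $2\eta(1+|\bar z|^2) \leq M/4$ guarantees $\sup\Phi_{\epsilon,\eta} \geq M/2 > 0$. Standard penalization estimates (cf.~\cite{crandall1992user,calderviscosity}) then give $|x_\epsilon-y_\epsilon|^2/\epsilon \to 0$ as $\epsilon\to 0$ with $\eta$ fixed, and $(x_\epsilon, y_\epsilon)$ remains in a bounded set depending only on $\eta$.

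Next I apply Ishii's lemma (the theorem of sums) at $(x_\epsilon, y_\epsilon)$ with the smooth penalty $\varphi(x,y) = \frac{1}{2\epsilon}|x-y|^2 + \eta|x|^2 + \eta|y|^2$, producing symmetric matrices $X,Y$ such that $(D_x\varphi(x_\epsilon,y_\epsilon), X) \in \bar{J}^{2,+}u(x_\epsilon)$, $(-D_y\varphi(x_\epsilon,y_\epsilon), Y) \in \bar{J}^{2,-}v(y_\epsilon)$, and $X \leq Y + C\eta I$ with a constant $C$ arising from $D^2\varphi$. The viscosity sub/supersolution inequalities read $u(x_\epsilon) - F(X) \leq g(x_\epsilon)$ and $v(y_\epsilon) - F(Y) \geq g(y_\epsilon)$, and subtracting yields
\[u(x_\epsilon) - v(y_\epsilon) \leq F(X) - F(Y) + g(x_\epsilon) - g(y_\epsilon).\]
The monotonicity of $F$ together with the width-$N$ Lipschitz estimate $|F(X) - F(Y)| \leq \Lip_N(F)\|X-Y\|_{N,\infty}$ (applied to the quadratic forms $\v \mapsto \v^T X \v$ and $\v \mapsto \v^T Y \v$), and the bound $\v^T(X-Y)\v \leq C\eta N^2$ for $|\v|_\infty \leq N$, yields $F(X) - F(Y) \leq C\eta N^2 \Lip_N(F)$.

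Finally I send $\epsilon \to 0$: up to subsequences $x_\epsilon, y_\epsilon \to \bar x$ for a common limit, and continuity of $g$ forces $g(x_\epsilon) - g(y_\epsilon) \to 0$. Combining with the lower bound $u(x_\epsilon) - v(y_\epsilon) \geq \Phi_{\epsilon,\eta}(x_\epsilon, y_\epsilon) \geq M/2$ gives $M/2 \leq C\eta N^2 \Lip_N(F) + o_\epsilon(1)$; sending $\eta \to 0$ afterwards contradicts $M > 0$. The main obstacle is the careful coordination of these two limits on the unbounded domain: the growth condition \eqref{eq:growthcond} is essential to ensure that the doubled maximizer $(x_\epsilon, y_\epsilon)$ stays in a compact set (otherwise no finite maximizer exists and the penalization machinery breaks down), while the penalty parameter $\eta$ must be chosen small enough---based on the location of an approximate maximizer of $u-v$---to preserve a quantitatively positive gap $M/2$ that survives the limit $\epsilon \to 0$.
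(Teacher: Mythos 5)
The paper does not reproduce a proof of this lemma; it defers to \cite{calderviscosity} (Lemma 12.17 there), so there is no in-text argument to compare against. Your proposal follows the standard doubling-of-variables argument with a quadratic penalization and the theorem of sums (Ishii's lemma), which is indeed the expected route. However, there is a genuine gap in one step.

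You assert that ``the sub-quadratic growth condition \eqref{eq:growthcond} forces $\Phi_{\epsilon,\eta}(x,y)\to -\infty$ as $|x|+|y|\to\infty$.'' This is not a consequence of \eqref{eq:growthcond}. The hypothesis controls $u(x)-v(x)$ along the diagonal $x=y$, but the doubled functional involves $u(x)-v(y)$ for $x\neq y$, and neither the USC/LSC assumptions nor \eqref{eq:growthcond} give any off-diagonal control. For example, take $u(x)=v(x)=|x|^4$ (so $u-v\equiv 0$ and \eqref{eq:growthcond} holds trivially); then in one dimension, with $x=R$, $y=R-1$, one has $\Phi_{\epsilon,\eta}(x,y)\approx 4R^3 - \frac{1}{2\epsilon} - 2\eta R^2\to +\infty$. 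Of course this pair is not a sub/supersolution pair, but your argument as written does not invoke the PDE to rule such growth out, so the claim that a global maximizer of $\Phi_{\epsilon,\eta}$ exists is unsupported, and the penalization estimates you cite downstream all presuppose it.

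The standard repairs are available and would plug the hole. One option is to first use \eqref{eq:growthcond} to produce a global maximizer $\hat x$ of the single-variable function $u(x)-v(x)-2\eta(1+|x|^2)$ (this \emph{is} justified: $u-v$ is USC and tends to $-\infty$ relative to the penalty), then localize the doubled functional near $(\hat x,\hat x)$ by adding a term such as $-|x-\hat x|^4$ and working on a fixed compact ball around $\hat x$, checking that for small $\epsilon$ the maximizer is interior. Alternatively, one can first prove that any subsolution $u$ and supersolution $v$ of \eqref{eq:gpde} with $g$ of linear growth are themselves of at most linear growth via explicit barriers of the form $A+B\sqrt{1+|x|^2}$ (as in the proof of Theorem~\ref{thm:existence}), after which the individual sub-quadratic bounds make $\Phi_{\epsilon,\eta}\to-\infty$ immediate. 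Once the maximizer of the doubled functional is secured, the remainder of your argument --- the application of Ishii's lemma giving $X\le Y+C\eta I$, the estimate $F(X)-F(Y)\le C\eta n N^2\Lip_N(F)$ via monotonicity plus the width-$N$ Lipschitz bound, and the order of limits $\epsilon\to 0$ then $\eta\to 0$ --- is correct and well organized.
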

Using this comparison principle and the Perron method, we can prove existence of a linear growth solution (Theorem \ref{thm:ub} below).  The application of the Perron method is standard and a self-contained proof can be found in  \cite[Theorem 12.18]{calderviscosity}.\footnote{In fact, the proofs do not require Lipschitzness; uniform continuity is sufficient.}
\begin{theorem}\label{thm:ub}
Assume $F$ is monotone, has width $N$, and satisfies $F(0)=0$. Assume $g$ is Lipschitz continuous and there exists $C_g>0$ such that
\begin{equation}\label{eq:glinear_growth}
  |g(x)| \leq C_g (1 + |x|).
\end{equation}
Then there exists a unique viscosity solution $u\in C(\R^n)$ of \eqref{eq:gpde} satisfying 
\[\lim_{|x|\to \infty}\frac{u(x)}{|x|^2} = 0.\]
Furthermore, there exists $C>0$ such that
\begin{equation}\label{eq:ugrowth}
|u(x)| \leq C(1 + |x|).
\end{equation}
\end{theorem}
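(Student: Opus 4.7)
The plan is to follow the Perron method exactly as one does in the standard viscosity solution theory (the paper cites \cite{crandall1992user, calderviscosity}), using the comparison principle in Lemma \ref{lem:ubcomp} as the key ingredient for both uniqueness and for gluing the Perron envelope together. Essentially, everything comes down to producing a pair of smooth linear-growth barriers that sandwich the solution. Once the barriers are in place, uniqueness is immediate from Lemma \ref{lem:ubcomp} applied in both directions to any two candidate solutions satisfying the sublinear growth condition $\lim_{|x|\to\infty} u(x)/|x|^2 = 0$, and the linear growth bound \eqref{eq:ugrowth} is inherited from the barriers.

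For the barriers, I would mimic the construction from Theorem \ref{thm:existence}. Let $\psi(x) = \sqrt{1+|x|^2}$; then $\psi \in C^\infty(\R^n)$, the inequality $\tfrac{1}{\sqrt{2}}(1+|x|) \leq \psi(x) \leq 1+|x|$ holds, and $\nabla^2\psi$ is bounded uniformly on $\R^n$. Because $F$ has width $N$ and $F(0)=0$, we obtain
\[|F(\nabla^2 \psi(x))| = |F(\nabla^2\psi(x)) - F(0)| \leq \Lip_N(F)\,\|\nabla^2\psi(x)\|_{N,\infty} \leq C\Lip_N(F) N^3 =: \xi,\]
uniformly in $x$. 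Define $w(x) := C_g(\xi + \sqrt{2}\,\psi(x))$. A direct calculation, using the bound on $F(\nabla^2\psi)$ together with \eqref{eq:glinear_growth}, yields $w - F(\nabla^2 w) \geq \sqrt{2}C_g\psi \geq C_g(1+|x|) \geq g$, so $w$ is a classical (hence viscosity) supersolution. By the same argument, $-w$ is a viscosity subsolution. Both have linear growth, so \eqref{eq:ugrowth} will follow automatically once $-w \leq u \leq w$ is established.

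Next I would set up the Perron family
\[\F := \{v \in \usc(\R^n) : v \text{ is a viscosity subsolution of }\eqref{eq:gpde},\ v \leq w\},\]
which is nonempty since $-w \in \F$, and define $u(x) := \sup\{v(x): v \in \F\}$. The standard Perron machinery (as presented in \cite[Thm.~12.18]{calderviscosity} or \cite{crandall1992user}) shows that the upper semicontinuous envelope $u^*$ is a viscosity subsolution and the lower semicontinuous envelope $u_*$ is a viscosity supersolution; here the monotonicity and continuity of $F$ are exactly what is needed to pass to envelopes and to bump local maxima. Since $-w \leq u_* \leq u^* \leq w$, both envelopes satisfy the sublinear growth hypothesis \eqref{eq:growthcond}, and Lemma \ref{lem:ubcomp} forces $u^* \leq u_*$. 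Therefore $u^* = u_* = u \in C(\R^n)$ is a viscosity solution of \eqref{eq:gpde} with $|u(x)| \leq w(x) \leq C(1+|x|)$.

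For uniqueness, if $\tilde{u} \in C(\R^n)$ is any other solution with $\lim_{|x|\to\infty}\tilde{u}(x)/|x|^2 = 0$, applying Lemma \ref{lem:ubcomp} to the pairs $(u,\tilde{u})$ and $(\tilde{u},u)$ (both growth conditions are satisfied since $u - \tilde u$ has at most linear growth) yields $u = \tilde{u}$. The only genuinely nontrivial step is the verification that $u^*$ is a subsolution and $u_*$ is a supersolution at points where the Perron supremum could fail to be attained; this is the classical bump-function argument and the reason we need $F$ to be continuous (uniform continuity on bounded stencils, which follows from width $N$) rather than merely monotone. Everything else reduces to the barrier computation and the invocation of Lemma \ref{lem:ubcomp}.
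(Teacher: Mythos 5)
Your proposal is correct and follows exactly the approach the paper intends: the paper itself defers the proof to references, noting that ``the application of the Perron method is standard,'' and your argument is precisely the continuum analogue of the paper's own Perron-method proof of Theorem \ref{thm:existence} in the discrete setting, using the same barrier $\psi(x)=\sqrt{1+|x|^2}$ and the comparison principle of Lemma \ref{lem:ubcomp}. One small nitpick: in the uniqueness step you write that ``$u-\tilde u$ has at most linear growth,'' but you only know $\tilde u$ is sub-quadratic, so the correct (and sufficient) justification is simply that both $u/|x|^2\to 0$ and $\tilde u/|x|^2\to 0$, which gives \eqref{eq:growthcond} directly.
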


Convergence of the discrete scheme \eqref{eq:general_discrete_pde} to the PDE \eqref{eq:gpde} is a standard result in viscosity solution theory. We state the theorem in the following result and briefly sketch the proof.
\begin{theorem}\label{thm:convergence}
Assume $F$ is monotone, has width $N$, and satisfies $F(0)=0$. Assume $g:\R^n\to \R$ is Lipschitz continuous and satisfies \eqref{eq:glinear_growth}. Let $u_h$ be the solution of \eqref{eq:general_discrete_pde} and let $u$ be the viscosity solution of \eqref{eq:gpde}. Then $u_h$ converges to $u$ locally uniformly as $h\to 0$, that is for all $R>0$ we have
\begin{equation}\label{eq:locallyuniform}
  \lim_{h\to 0} \max_{\substack{x\in \Zh \\ |x| \leq R}} |u_h(x) - u(x)| = 0.
\end{equation}
\end{theorem}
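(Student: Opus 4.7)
The plan is to apply the Barles--Souganidis framework for convergence of monotone, stable, and consistent finite difference schemes by means of the half-relaxed limits. Stability comes for free: Theorem~\ref{thm:existence} gives $|u_h(x)| \leq C(1+|x|)$ with $C$ independent of $h \in (0,1]$. Using this, I would define
\[
\bar u(x) = \limsup_{\substack{h\to 0 \\ y \to x,\, y \in \Zh}} u_h(y), \qquad \underline u(x) = \liminf_{\substack{h\to 0 \\ y \to x,\, y \in \Zh}} u_h(y),
\]
which are finite everywhere, satisfy $\underline u \le \bar u$, and inherit the linear growth bound. By construction $\bar u \in \usc(\R^n)$ and $\underline u \in \lsc(\R^n)$.

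The heart of the proof is to show $\bar u$ is a viscosity subsolution and $\underline u$ a viscosity supersolution of \eqref{eq:gpde}; I would argue only for $\bar u$ as the other case is symmetric. Let $\phi \in C^2(\R^n)$ and suppose $\bar u - \phi$ has a strict local max at $x_0$. A standard extraction (perturb $\phi$ by a small positive quadratic so the max is strict, and use the uniform linear growth of $u_h$ to confine near-maximizers to a compact set) yields sequences $h_k \to 0$ and $x_k \in \Z_{h_k}^n$ with $x_k \to x_0$ and $u_{h_k}(x_k) \to \bar u(x_0)$, such that $x_k$ is a local max of $u_{h_k} - \phi$ on $\Z_{h_k}^n$. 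For every $\v \in \Zn$ the inequalities $u_{h_k}(x_k \pm h_k\v) - \phi(x_k \pm h_k\v) \leq u_{h_k}(x_k) - \phi(x_k)$ give $\nabla^2_{h_k} u_{h_k}(x_k,\v) \leq \nabla^2_{h_k}\phi(x_k,\v)$, so by monotonicity of $F$,
\[
u_{h_k}(x_k) - F(\nabla^2_{h_k}\phi(x_k)) \;\leq\; u_{h_k}(x_k) - F(\nabla^2_{h_k} u_{h_k}(x_k)) \;=\; g(x_k).
\]
A Taylor expansion gives $\nabla^2_{h_k}\phi(x_k,\v) \to \v^T \nabla^2 \phi(x_0)\v$ uniformly on $\{|\v|_\infty \leq N\}$, and the width-$N$ Lipschitz property of $F$ upgrades this to $F(\nabla^2_{h_k}\phi(x_k)) \to F(\nabla^2\phi(x_0))$. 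Passing to the limit yields $\bar u(x_0) - F(\nabla^2\phi(x_0)) \le g(x_0)$, which is the subsolution condition.

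Finally, the width-$N$ Lipschitz condition makes $F$ uniformly continuous, so Lemma~\ref{lem:ubcomp} applies. Because $\bar u - \underline u$ has linear growth, comparison gives $\bar u \le \underline u$, and combined with $\underline u \le \bar u$ this forces $\bar u = \underline u$; the common value is continuous and, using Lemma~\ref{lem:ubcomp} together with Theorem~\ref{thm:ub}, it coincides with the unique linear-growth viscosity solution $u$. Local uniform convergence \eqref{eq:locallyuniform} then follows by the standard contradiction argument: if it failed there would exist $R,\epsilon > 0$, $h_k \to 0$, and $x_k \in \Z_{h_k}^n$ with $|x_k| \le R$ and $|u_{h_k}(x_k) - u(x_k)| \ge \epsilon$; extracting $x_k \to x_\infty$ contradicts $\bar u(x_\infty) = \underline u(x_\infty) = u(x_\infty)$ by continuity of $u$.

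The whole argument is classical; the only mildly delicate point is the localization in Step~2 producing the discrete maximizers $x_k \to x_0$ at which the discrete Taylor expansion and the width-$N$ Lipschitz estimate can be combined for consistency. The linear growth bound and the usual quadratic perturbation trick handle this cleanly, and the finite stencil width $N$ ensures that consistency for $F$ reduces to uniform consistency of the $\nabla^2_h$ operator on a finite set of directions.
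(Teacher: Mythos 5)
Your proof is correct, but it takes a genuinely different route from the paper's. The paper uses the discrete Lipschitz estimate $\Lip(u_h) \leq \Lip(g)$ from Lemma~\ref{lem:basicprop}~(i) together with the Arzel\`a--Ascoli theorem to extract a subsequence $u_{h_k}$ converging locally uniformly to some Lipschitz function $u$, and then verifies directly that this $u$ is a viscosity sub- and supersolution; uniqueness from Theorem~\ref{thm:ub} then upgrades subsequential to full convergence. You instead follow the Barles--Souganidis framework: you build the half-relaxed upper and lower envelopes $\bar u$ and $\underline u$ using only the uniform-in-$h$ linear growth bound from Theorem~\ref{thm:existence} (so no equicontinuity is needed at this stage), show they are a subsolution and a supersolution respectively, and then invoke the comparison principle of Lemma~\ref{lem:ubcomp} to conclude $\bar u \leq \underline u$, hence equality, hence local uniform convergence. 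The consistency/monotonicity step in the middle---extracting discrete near-maximizers $x_k \to x_0$ of $u_{h_k}-\phi$, using $\nabla^2_{h_k}u_{h_k}(x_k,\v) \leq \nabla^2_{h_k}\phi(x_k,\v)$ and the width-$N$ Lipschitz continuity of $F$ to pass to the limit---is essentially identical in both arguments. What your version buys is robustness: it does not use the Lipschitz estimate on $u_h$ at all and would still go through for merely uniformly continuous $g$ (or any setting where stability is available but equicontinuity is not), at the modest cost of invoking the comparison principle twice (once for each inequality between the envelopes) rather than once for uniqueness. The paper's route is slightly more direct given that the Lipschitz estimate is already in hand from Lemma~\ref{lem:basicprop}.
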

\begin{proof}
  By Lemma \ref{lem:basicprop} (i) we have $\Lip(u_h)\leq \Lip(g)$. By the Arzel\'a-Ascoli Theorem there exists a Lipschitz continuous function $u:\R^n\to \R$ such that, upon passing to a subsequence $u_{h_k}$, we have
  \[\lim_{k\to \infty} \max_{\substack{x\in \Z^n_{h_k} \\ |x| \leq R}} |u_{h_k}(x) - u(x)| = 0\]
  for all $R>0$. The proof will be completed by showing that $u$ is a viscosity solution of \eqref{eq:gpde}. By uniqueness of viscosity solutions of \eqref{eq:gpde}, the whole sequence $u_h$ converges locally uniformly to $u$.

We verify the subsolution property for $u$; the supersolution property is similar. Let $x_0 \in \R^n$ and $\phi\in C^\infty(\R^n)$ such that $u-\phi$ has a local maximum at $x_0$. Without loss of generality we can assume $x_0$ is a strict global maximum of $u-\phi$. It follows that there exists $x_k \to x_0$ such that $u_{h_k} - \phi$ attains its maximum over $\Z_{h_k}^n$ at $x_k$. Therefore $\nabla^2_h u_{h_k}(x_k,\v) \leq \nabla^2_h \phi(x_k)$, and since $F$ is monotone we have
  \[u(x_k) - F(\nabla^2_{h_k} \phi(x_k)) \leq \phi(x_k) - F(\nabla^2_h u_{h_k}(x_k)) =u(x_k) -u_{h_k}(x_k) - g(x_k).\]
  Sending $k\to \infty$ we obtain
  \[u(x_0) - F(\nabla^2 \phi(x_0)) \leq g(x_0),\]
  which completes the proof.
\end{proof}

If we have additional regularity for the solution $u$ of \eqref{eq:gpde}, we can prove convergence rates. We recall the $C^{k,1}$ seminorm of $u:\R^n\to \R$ is defined by
\[[u]_{C^{k,1}(\R^n)} = \sum_{1\leq |\alpha| \leq k} \Lip(D^\alpha u).\]
\begin{theorem}\label{thm:convergencerate}
  Assume $F$ is monotone, has width $N$, and satisfies $F(0)=0$. Assume $g:\R^n\to \R$ is Lipschitz continuous and satisfies \eqref{eq:glinear_growth}. Let $u_h$ be the solution of \eqref{eq:general_discrete_pde} and let $u$ be the viscosity solution of \eqref{eq:general_discrete_pde}. If $[u]_{C^{k,1}(\R^n)} < \infty$ for $k=2$ or $k=3$, then we have  
  \begin{equation}\label{eq:convergencerate}
    \|u - u_h\|_\infty \leq C\Lip_N(F)[u]_{C^{k,1}(\R^n)}N^{k+1}h^{k-1}
  \end{equation}
\end{theorem}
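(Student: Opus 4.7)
The plan is a standard consistency-plus-monotonicity argument in the Barles--Souganidis spirit, using the discrete comparison principle of Theorem \ref{thm:discrete_comparison} as the stability ingredient. Since $[u]_{C^{k,1}(\R^n)}<\infty$ for $k\geq 2$, the viscosity solution $u$ is a classical $C^2$ solution and satisfies $u(x) - F(\nabla^2 u(x)) = g(x)$ pointwise on $\R^n$. The idea is to plug this classical $u$ into the discrete scheme, measure the resulting consistency error, and then bracket $u_h$ between $u\pm \eps$ via comparison.

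First I would establish the pointwise consistency estimate. For any $\v\in \Zn$ and $x\in\Zh$, Taylor-expand $u(x\pm h\v)$: when $k=2$, write the second-order expansion with integral remainder and use the Lipschitz bound on $\nabla^2 u$; when $k=3$, write the third-order expansion with integral remainder and use the Lipschitz bound on $D^3 u$ (noting that in the symmetric difference defining $\nabla_h^2 u$ the odd-order Taylor terms cancel). Adding the expansions for $+h\v$ and $-h\v$, dividing by $h^2$, and collecting the remainder gives
\[
\bigl|\nabla^2_h u(x,\v) - \v^T \nabla^2 u(x)\,\v\bigr| \leq C\,[u]_{C^{k,1}(\R^n)}\,|\v|^{k+1}\,h^{k-1},
\]
with $C$ depending only on $n$ and $k$.

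Next I would use the width-$N$ Lipschitz property of $F$ from Definition \ref{def:Lipschitz}. For $\v\in\Zn$ with $|\v|_\infty\leq N$ we have $|\v|\leq \sqrt{n}\,N$, so the pointwise Taylor error above is bounded (uniformly in such $\v$) by $C[u]_{C^{k,1}}N^{k+1}h^{k-1}$. Hence, abbreviating $\eta := C\,\Lip_N(F)\,[u]_{C^{k,1}(\R^n)}\,N^{k+1}\,h^{k-1}$,
\[
\bigl|F(\nabla^2_h u(x)) - F(\nabla^2 u(x))\bigr| \leq \Lip_N(F)\,\|\nabla^2_h u(x) - \nabla^2 u(x)\|_{N,\infty} \leq \eta,
\]
so on $\Zh$ the restriction of the classical solution $u$ satisfies
\[
\bigl|\,u(x) - F(\nabla^2_h u(x)) - g(x)\,\bigr| \leq \eta.
\]

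Finally I would apply the discrete comparison principle. The function $w:=u+\eta$ satisfies $w - F(\nabla_h^2 w) = u + \eta - F(\nabla_h^2 u) \geq g + \eta - \eta = g = u_h - F(\nabla_h^2 u_h)$ on $\Zh$, and by linear growth of both $u$ (Theorem \ref{thm:ub}) and $u_h$ (Theorem \ref{thm:existence}) the growth condition \eqref{eq:growth_cond} is automatic for $u_h - w$. Theorem \ref{thm:discrete_comparison} then yields $u_h \leq u+\eta$ on $\Zh$; the symmetric argument with $w:=u-\eta$ gives $u-\eta \leq u_h$, proving $\|u-u_h\|_\infty \leq \eta$, which is exactly \eqref{eq:convergencerate}. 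The only mildly subtle step is the consistency estimate for $k=3$, where one must keep track of the cancellation of odd-order Taylor terms in the symmetric second difference in order to achieve the full $h^2$ rate; everything else is a clean stability argument using the monotonicity and width-$N$ structure already developed.
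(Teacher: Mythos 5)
Your proof is correct and follows essentially the same route as the paper: a Taylor-expansion consistency estimate $|\nabla^2_h u(x,\v) - \v^T\nabla^2 u(x)\v| \leq C[u]_{C^{k,1}}|\v|^{k+1}h^{k-1}$, propagated through $F$ via the width-$N$ Lipschitz bound, followed by the discrete comparison principle (Theorem \ref{thm:discrete_comparison}) applied to $u\pm\eta$ versus $u_h$. The extra remarks you include (classicality of a $C^2$ viscosity solution, explicit verification of the growth condition \eqref{eq:growth_cond}, cancellation of odd-order terms when $k=3$) are correct details the paper leaves implicit.
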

\begin{proof}
By Taylor expansion we have
  \[  |\nabla^2_h u(x,\v) - \v^T \nabla^2 u(x) \v| \leq C[u]_{C^{k,1}(\R^n)}|\v|^{k+1}h^{k-1}.\]
Therefore
  \[|F(\nabla^2_h u(x)) - F(\nabla^2 u(x))| \leq C\Lip_N(F)[u]_{C^{k,1}(\R^n)}N^{k+1}h^{k-1}=:\epsilon.\]
It follows that
  \[u(x) - F(\nabla^2_hu(x)) \leq u(x) - F(\nabla^2 u(x)) + \epsilon = g(x) + \epsilon \ \ \text{for } x\in \Zh.\]
  By Theorem \ref{thm:discrete_comparison} we have $u \leq u_h + \epsilon$ on $\Zh$. The opposite inequality is obtained similarly.
\end{proof}

From the convergence results in Theorems \ref{thm:convergence} and \ref{thm:convergencerate}, we immediately obtain that all the properties of the discrete solutions proved in Section \ref{sec:properties} extend to the viscosity solution $u$. 
\begin{proposition}\label{prop:properties}
Assume $F$ is monotone, has width $N$, and satisfies $F(0)=0$. Assume $g:\R^n\to \R$ is Lipschitz continuous and satisfies \eqref{eq:glinear_growth}. Let $u$ be the viscosity solution of \eqref{eq:general_discrete_pde}. Then the following hold. 
\begin{enumerate}[\normalfont(i)]
\item $u$ is Lipschitz continuous and $\Lip(u) \leq \Lip(g)$.
\item If $F\geq 0$ then $u\geq g$ on $\R^n$.
\item There exists a constant $C>0$ such that
\begin{equation}\label{eq:utog_continuum}
  \|u - g\|_\infty \leq CN\Lip(g)\sqrt{\Lip_N(F)}.
\end{equation}
\item If $F$ and $g$ are convex, then $u$ is convex.
\item If $F$ and $g$ are permutation invariant, then $u$ is permutation invariant.
\item If $g$ satisfies the $\v$-translation property with constant $c_\v$, then $u$ does as well.
\end{enumerate}
\end{proposition}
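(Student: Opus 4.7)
The plan is to deduce each of the six properties of the viscosity solution $u$ of \eqref{eq:gpde} directly from the corresponding discrete property established in Lemmas \ref{lem:basicprop} and \ref{lem:otherprop}, by passing to the limit $h\to 0$ using the locally uniform convergence $u_h\to u$ granted by Theorem \ref{thm:convergence}. The elementary fact I use repeatedly is that for any $x\in\R^n$ one can choose $x_h\in\Zh$ with $x_h\to x$, and then
\[
|u_h(x_h) - u(x)| \leq |u_h(x_h) - u(x_h)| + |u(x_h) - u(x)| \longrightarrow 0,
\]
since the first term vanishes by Theorem \ref{thm:convergence} (applied on any ball containing the $x_h$) and the second by continuity of $u$ from Theorem \ref{thm:ub}.

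For (i), given $x,y\in\R^n$ and grid approximants $x_h,y_h\in\Zh$, Lemma \ref{lem:basicprop}(i) gives $|u_h(x_h)-u_h(y_h)|\leq\Lip(g)\|x_h-y_h\|$, and the limit yields $\Lip(u)\leq\Lip(g)$. Statement (ii) is immediate from the pointwise inequality $u_h(x_h)\geq g(x_h)$ together with continuity of $g$. For (iii), Lemma \ref{lem:basicprop}(iii) provides $|u_h(x_h)-g(x_h)|\leq C\Lip(g)(N\sqrt{\Lip_N(F)}+h)$; letting $h\to 0$ produces the continuum bound. For (v), since any permutation $\sigma$ of coordinates preserves $\Zh$, Lemma \ref{lem:otherprop}(ii) gives $u(\sigma(x))=\lim_h u_h(\sigma(x_h))=\lim_h u_h(x_h)=u(x)$. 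For (vi), given $x\in\R^n$ and $s\in\R$, choose $x_h\in\Zh$ with $x_h\to x$ and $s_h\in h\Z$ with $s_h\to s$, so that $s_h\v\in\Zh$ and $x_h+s_h\v\to x+s\v$; Lemma \ref{lem:otherprop}(iii) then gives $u_h(x_h+s_h\v)=u_h(x_h)+c_\v s_h$, and passing to the limit transfers the translation property to $u$.

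The only step requiring extra care, and which I expect to be the mild obstacle, is the convexity claim (iv), since discrete convexity in the sense of Definition \ref{def:uconvex} is only midpoint convexity along lattice directions at scale $h$. To recover continuum convexity, I would restrict the grid approximants to the even sublattice $2h\Zn\subset\Zh$: for any $x,y\in\R^n$ one can choose $x_h,y_h\in 2h\Zn$ with $x_h\to x$ and $y_h\to y$, so that $m_h:=\tfrac{1}{2}(x_h+y_h)\in\Zh$ and $\v_h:=\tfrac{1}{2h}(y_h-x_h)\in\Zn$ satisfy $m_h\pm h\v_h=y_h,x_h$. The discrete midpoint inequality $u_h(m_h)\leq\tfrac{1}{2}(u_h(x_h)+u_h(y_h))$ from Lemma \ref{lem:otherprop}(i) then passes to the limit and yields $u(\tfrac{x+y}{2})\leq\tfrac{1}{2}(u(x)+u(y))$. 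Since $u$ is continuous (in fact Lipschitz) by (i), midpoint convexity is equivalent to convexity, completing the proof.
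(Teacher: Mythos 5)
Your proof is correct and takes the same route the paper indicates (the paper's "proof" is a one-line remark that the proposition follows from Lemmas \ref{lem:basicprop} and \ref{lem:otherprop} together with Theorem \ref{thm:convergence}, with no details supplied). The one genuinely delicate point you correctly flagged and handled is (iv): discrete convexity ($\nabla^2_h u_h\geq 0$) is only a midpoint inequality along lattice directions, and your device of choosing approximants in the even sublattice $2h\Zn$ so the midpoint lands in $\Zh$, passing to the limit, and then invoking continuity of $u$ (from part (i)) to upgrade midpoint convexity to convexity is exactly what is needed to close that gap.
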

We note that in (iv), the notion of convexity for $F$ is the same as in Definition \ref{def:Fconvex}, while for $u$ it is the usual one for functions on $\R^n$, that is
\[u(\lambda x + (1-\lambda) y) \leq \lambda u(x) + (1-\lambda) u(y).\]
In (v), the definition of permutation invariance for $u:\R^n\to \R$ is identical to Definition \ref{def:permutationinvariance}; namely $u=u\circ \sigma$.  The translation property is defined similarly, but is slightly different so we give the definition below for functions on $\R^n$.
\begin{definition}\label{def:translationpropRn}
For $\v\in \R^n$, we say $u:\R^n\to \R$ satisfies the \emph{$\v$-translation property} if there exists a constant $c_\v$ such that \eqref{eq:translation} holds for all $x\in \R^n$ and $s\in \R$.
\end{definition}
The proof of Proposition \ref{prop:properties} follows from Lemmas \ref{lem:basicprop} and \ref{lem:otherprop}, and the convergence result in Theorem \ref{thm:convergence}.

\subsection{Restricting the domain}
\label{sec:restricting}

In order to compute the solution of the discrete scheme \eqref{eq:general_discrete_pde} on an unbounded domain $\Zh$, it is necessary to restrict the domain to a compact set. In this section, we study restrictions of \eqref{eq:pde} to \emph{computational} domains of the form $\Omega_{T,h} := [-T,T]^n\cap \Zh$. In this section we always assume $T=mh$ for some integer $m\geq 1$.  We define the width $N$ boundary as
\begin{equation}\label{eq:boundary}
  \partial_N \Omega_{T,h} = \{x\in \Zh\setminus \Omega_{T,h}\, : \, \text{there exists }y\in \Omega_{T,h} \text{ with }   |x-y|_\infty \leq N\}.
\end{equation}
We set a Dirichlet boundary condition on $\partial_N \Omega_{T,h}$ and show that this condition affects the solution only near the boundary, and the solution remains accurate in the interior of $\Omega_{T,h}$.

In this section we study the equation
\begin{equation}\label{eq:general_discrete_pde_T}
  u - F(\nabla ^2_h u) = g \ \  \text{ on }  \ \Omega_{T,h},
\end{equation}
which is the restriction of \eqref{eq:general_discrete_pde} to the computational domains $\Omega_{T,h}$. We first recall the comparison principle for \eqref{eq:general_discrete_pde_T}.
\begin{lemma}\label{lem:Tcomp}
Assume $F$ is monotone, has width $N$, and satisfies $F(0)=0$.  Let $u,v:\Zh\to \R$ satisfy
\begin{equation}\label{eq:subT}
  u - F(\nabla^2_h u) \leq v - F(\nabla^2_h v) \ \ \text{ on } \ \Omega_{T,h}
\end{equation}
and $u\leq v$ on $\partial_N \Omega_{T,h}$. Then $u\leq v$ on $\Omega_{T,h}$.
\end{lemma}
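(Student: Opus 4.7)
The plan is to argue by contradiction, similar to the proof of Theorem~\ref{thm:discrete_comparison}, but the argument should be simpler because $\Omega_{T,h}$ is a finite set. In particular, there is no need to introduce the auxiliary perturbation $-\tfrac{\epsilon}{2}|x|^2 - \epsilon N^2 - \epsilon$: the maximum of $u-v$ over the finite set $\Omega_{T,h}$ is automatically attained, and the strict positivity of that maximum (if we assume the conclusion fails) is enough to drive the contradiction directly.

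Concretely, suppose toward contradiction that $M := \max_{\Omega_{T,h}} (u-v) > 0$, and pick $x_0 \in \Omega_{T,h}$ attaining this maximum. The hypothesis $u\leq v$ on $\partial_N \Omega_{T,h}$ means $u-v\leq 0 < M$ on the width-$N$ boundary, so $u-v \leq M$ throughout $\Omega_{T,h}\cup\partial_N\Omega_{T,h}$, with equality at $x_0$. The definition \eqref{eq:boundary} of $\partial_N \Omega_{T,h}$ guarantees that for every $\v\in \Zn$ with $|\v|_\infty\leq N$ we have $x_0 \pm h\v \in \Omega_{T,h}\cup \partial_N\Omega_{T,h}$, so the bound $u(x_0\pm h\v)-v(x_0\pm h\v)\leq u(x_0)-v(x_0)$ is available at exactly the stencil points that matter. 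Rearranging this inequality gives $\nabla^2_h u(x_0,\v) \leq \nabla^2_h v(x_0,\v)$ for all $\v$ with $|\v|_\infty \leq N$. Finally, combining $F(\nabla^2_h u(x_0))\leq F(\nabla^2_h v(x_0))$ (proved below) with the subsolution/supersolution inequality \eqref{eq:subT} at $x_0$ yields
\[
u(x_0)-v(x_0) \leq F(\nabla^2_h u(x_0)) - F(\nabla^2_h v(x_0)) \leq 0,
\]
contradicting $u(x_0)-v(x_0)=M>0$.

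The one subtle step, and the one I expect to be the main obstacle, is deducing $F(\nabla^2_h u(x_0))\leq F(\nabla^2_h v(x_0))$ from the monotonicity of $F$ (Definition~\ref{def:monotonicity}), because monotonicity of $F$ is stated for $X\leq Y$ pointwise on all of $\Zn$, whereas the maximum-principle argument only delivers the comparison on the truncated stencil $|\v|_\infty\leq N$. The fix is a routine truncation using the width-$N$ property (Definition~\ref{def:Lipschitz}): two elements of $\H_n$ that agree on $\{|\v|_\infty\leq N\}$ have the same value under $F$, so one can define a modified $\widetilde{X}\in \H_n$ which equals $\nabla^2_h u(x_0)$ on $\{|\v|_\infty\leq N\}$ and equals $\min(\nabla^2_h u(x_0),\nabla^2_h v(x_0))$ on $\{|\v|_\infty>N\}$. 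Then $\widetilde X\leq \nabla^2_h v(x_0)$ pointwise on $\Zn$, so monotonicity gives $F(\widetilde X)\leq F(\nabla^2_h v(x_0))$, and the width-$N$ property gives $F(\widetilde X) = F(\nabla^2_h u(x_0))$, yielding the required inequality. With this detail handled, the rest of the argument is a one-paragraph adaptation of Theorem~\ref{thm:discrete_comparison}.
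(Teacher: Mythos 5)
Your proof is correct and follows the same structure as the paper's: identify the point $x_0\in\Omega_{T,h}\cup\partial_N\Omega_{T,h}$ where $u-v$ is maximal, rule out $x_0\in\partial_N\Omega_{T,h}$ using the boundary hypothesis, obtain the discrete Hessian comparison at $x_0$, and conclude via the monotonicity of $F$ together with \eqref{eq:subT}. You cast it as a proof by contradiction while the paper argues directly, but that is cosmetic. The one place where you genuinely improve on the paper is the truncation step. You correctly observe that the maximum of $u-v$ over $\Omega_{T,h}\cup\partial_N\Omega_{T,h}$ yields $\nabla^2_h u(x_0,\v)\le\nabla^2_h v(x_0,\v)$ only for $|\v|_\infty\le N$, since for larger $\v$ the stencil points $x_0\pm h\v$ can leave $\Omega_{T,h}\cup\partial_N\Omega_{T,h}$, where there is no assumed control on $u-v$. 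Since Definition~\ref{def:monotonicity} requires $X\le Y$ pointwise on all of $\Zn$ before concluding $F(X)\le F(Y)$, the paper's application of monotonicity is, strictly speaking, incomplete on its own: the width-$N$ property is being used implicitly. Your fix --- define $\widetilde X$ to agree with $\nabla^2_h u(x_0)$ on $\{|\v|_\infty\le N\}$ and to be capped by $\nabla^2_h v(x_0)$ outside that set, apply monotonicity to get $F(\widetilde X)\le F(\nabla^2_h v(x_0))$, and use width $N$ to recover $F(\widetilde X)=F(\nabla^2_h u(x_0))$ --- is exactly the right way to close that gap, and the lemma's proof would read more cleanly if this were stated explicitly.
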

\begin{proof}
  Let $x_0\in \Omega_{T,h}\cup \partial_N \Omega_{T,h}$ be a point where $u-v$ attains its maximum value over $\Omega_{T,h}\cup \partial_N \Omega_{T,h}$. If $x_0 \in \partial_N \Omega_{T,h}$ then $u\leq v$, so we may assume $x_0 \in \Omega_{T,h}$. In this case we have $\nabla^2_h u(x_0,\v) \leq \nabla^2_h v(x_0,\v)$, and since $F$ is monotone, we obtain
  \[ u(x_0) - v(x_0) \leq F(\nabla^2_h u(x_0))  - F(\nabla^2_h v(x_0)) \leq 0.\]
 This completes the proof. 
\end{proof}

We now establish localization of solutions of \eqref{eq:general_discrete_pde_T}.
\begin{theorem}\label{thm:localization}
Assume $F$ is monotone, has width $N$, and satisfies $F(0)=0$.  Let $u,v:\Zh\to \R$ satisfy \eqref{eq:subT}. Then for any $\alpha \in (0,1)$ we have
\begin{equation}\label{eq:local}
  \max_{\Omega_{\alpha T,h}}(u-v) \leq \frac{C \Lip_N(F)N^2}{(1-\alpha)^2T^2}\left(\log\left( \frac{T^2}{\Lip_N(F) N^2}\right)^2+1 \right)\max_{\partial_N \Omega_{T,h}}(u-v).
\end{equation}
\end{theorem}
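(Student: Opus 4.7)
The plan is to prove the localization estimate by comparison (Lemma~\ref{lem:Tcomp}) against a polynomial barrier. Setting $M := \max_{\partial_N \Omega_{T,h}}(u-v)$, I will construct a nonnegative function $\phi:\Zh\to\R$ with: (i) $\Lip_N(F)\|\nabla^2_h \phi(x)\|_{N,\infty}\leq\phi(x)$ on $\Omega_{T,h}$; (ii) $\phi\geq M$ on $\partial_N\Omega_{T,h}$; and (iii) $\phi$ bounded on $\Omega_{\alpha T,h}$ by the right-hand side of \eqref{eq:local} times $M$. Once $\phi$ is in hand, the width-$N$ Lipschitz property of $F$ yields $F(\nabla^2_h(v+\phi))-F(\nabla^2_h v)\leq\Lip_N(F)\|\nabla^2_h\phi\|_{N,\infty}\leq\phi$ on $\Omega_{T,h}$ by (i), and combining with the subsolution hypothesis~\eqref{eq:subT} gives $u-F(\nabla^2_h u)\leq(v+\phi)-F(\nabla^2_h(v+\phi))$. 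Together with $u\leq v+\phi$ on $\partial_N\Omega_{T,h}$ from (ii), Lemma~\ref{lem:Tcomp} then delivers $u-v\leq\phi$ throughout $\Omega_{T,h}$, and (iii) closes the argument on the smaller box.

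For the barrier I would try the polynomial ansatz
\[
\phi(x) = A + B\sum_{i=1}^n x_i^{2p}
\]
for a positive integer $p$ and constants $A,B>0$ to be chosen. Using the discrete second-difference formula, $\nabla^2_h(x_i^{2p})(x,v)$ is majorized (up to higher-order Taylor remainders controllable in $h$) by $Cp^2 x_i^{2p-2}v_i^2$, so for $x\in\Omega_{T,h}$ and $|v|_\infty\leq N$ one has $\|\nabla^2_h \phi(x)\|_{N,\infty}\leq C p^2 B N^2 n T^{2p-2}$. Writing $L:=\Lip_N(F)N^2$, taking $A=Cp^2 L n B T^{2p-2}$ makes (i) hold, since then $\phi(x)\geq A$ already majorizes the uniform bound on $\Lip_N(F)|\nabla^2_h\phi|$. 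On $\partial_N\Omega_{T,h}$ at least one $|x_i|\geq T$, so $\phi\geq BT^{2p}$; setting $B=M/T^{2p}$ enforces (ii). On $\Omega_{\alpha T,h}$, each $|x_i|\leq\alpha T$, so
\[
\phi(x) \leq A + nB(\alpha T)^{2p} = M\left(\frac{Cp^2 L n}{T^2} + n\alpha^{2p}\right).
\]

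The logarithms in \eqref{eq:local} then emerge from optimizing $p$. In the meaningful regime $T^2 > L$, choosing the smallest integer $p$ with $p\geq \frac{1}{2(1-\alpha)}\log(T^2/L)$ gives $\alpha^{2p}\leq e^{-2p(1-\alpha)}\leq L/T^2$ and $p^2\leq C\bigl[(\log(T^2/L))^2/(1-\alpha)^2+1\bigr]$, and substitution yields
\[
\phi(x)/M \leq \frac{CLn}{(1-\alpha)^2 T^2}\bigl[(\log(T^2/L))^2+1\bigr],
\]
matching \eqref{eq:local}. The opposite regime $T^2\leq L$ is trivial because the claimed right-hand side already exceeds $M$, while the crude bound $u-v\leq M$ holds everywhere.

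The main obstacle will be verifying property (i) rigorously for the discrete operator. The exact formula $\nabla^2_h(x_i^{2p})(x,v)=2\sum_{j=1}^p\binom{2p}{2j}x_i^{2p-2j}v_i^{2j}h^{2j-2}$ has higher-order corrections ($j\geq 2$) whose coefficients $\binom{2p}{2j}$ grow with $p$, and since the optimal $p$ can reach $\log(T^2/L)/(1-\alpha)$, these corrections must be absorbed into the dominant $j=1$ term $2p(2p-1)x_i^{2p-2}v_i^2$. I expect this is routine provided the grid spacing $h$ is sufficiently small relative to $T$ and $N$ (consistent with the discretization regime of interest for the scheme), but the book-keeping is the delicate part of the argument.
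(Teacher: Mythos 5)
Your proof is correct and follows the same architecture as the paper's---construct a barrier supersolution $v+\phi$ that dominates $u$ on $\partial_N\Omega_{T,h}$, absorb the width-$N$ Lipschitz error of $F$ into a constant shift, invoke Lemma~\ref{lem:Tcomp}, then evaluate the barrier on $\Omega_{\alpha T,h}$---but with a genuinely different barrier. The paper uses the exponential boundary layer $w=v+\gamma+\mu\sum_{i}\bigl(e^{\lambda(x_i/T-1)}+e^{-\lambda(x_i/T+1)}\bigr)$ and optimizes the rate $\lambda$, setting $(1-\alpha)\lambda=\log\bigl(T^2/(\Lip_N(F)N^2)\bigr)$; you use the polynomial $\phi=A+B\sum_i x_i^{2p}$ and optimize the degree, $p\sim\log(T^2/L)/(1-\alpha)$. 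The $\log^2$ in the bound emerges the same way in both: the rate or degree must scale logarithmically in $T^2/L$ so that the barrier's decay on $\Omega_{\alpha T,h}$ (through $e^{-\lambda(1-\alpha)}$ or $\alpha^{2p}$) balances the quadratic-in-$\lambda$ (resp.\ $p$) size of the discrete-second-derivative estimate. The bookkeeping you flag as delicate---absorbing the binomial tail $\binom{2p}{2j}x_i^{2p-2j}v_i^{2j}h^{2j-2}$, $j\geq 2$, into the leading $j=1$ term---is real and requires $pNh/T\lesssim 1$. It is worth knowing that the paper's proof hides exactly the same implicit constraint: its estimate $\|\nabla^2_h w-\nabla^2_h v\|_{N,\infty}\leq C\mu\lambda^2N^2/T^2$ suppresses a $\cosh(\lambda hN/T)$ factor, since $\nabla^2_h e^{\lambda x_i/T}=\tfrac{2}{h^2}\bigl(\cosh(\lambda hv_i/T)-1\bigr)e^{\lambda x_i/T}$, and this factor is $O(1)$ only when $\lambda hN/T\lesssim 1$. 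So neither argument is fully airtight without a smallness hypothesis on $h$ relative to $(1-\alpha)T/(N\log(T^2/L))$, but both are correct in the intended discretization regime. The exponential barrier is slightly cleaner because its discrete second differences are proportional to the barrier itself, so the analogue of your condition (i) holds pointwise without truncating a series; your polynomial version is a valid and arguably more elementary alternative that pays for its elementarity in the binomial-tail estimate.
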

\begin{remark}\label{rem:interior-domain-accuracy}
The estimate \eqref{eq:local} in Theorem \ref{thm:localization} shows that the Dirichlet boundary conditions on $\partial_N \Omega_{T,h}$ have a limited domain of influence on the solution in $\Omega_{T,h}$ when $T>0$ is large. Indeed, if we fix, say, $\alpha=\tfrac12$, and if $u,v:\Zh\to \R$ are solutions of \eqref{eq:general_discrete_pde_T}, then $u$ and $v$ satisfy \eqref{eq:subT} with equality, and so it follows from two applications of Theorem \ref{thm:localization}, applied to $u-v$ and $v-u$, that
\begin{equation}\label{eq:localization}
  \max_{\Omega_{T/2,h}}|u-v|\leq \frac{C\log(T)^2}{T^2}\max_{\partial_N \Omega_{T,h}}|u-v|,
\end{equation}
holds, where $C$ depends on $\Lip_N(F)$ and $N$, and $T\geq 3$. This shows that errors in the Dirichlet condition on $\partial_N\Omega_{T,h}$ can be tolerated numerically, provided $T>0$ is large enough. For example,  by Lemma \ref{lem:basicprop} (iii), we can set $u=g$ on $\partial_N \Omega_{T,h}$ and obtain an $\O(\log(T)^2/T^2)$ approximation of the solution of \eqref{eq:gpde} on the interior domain $\Omega_{T/2,h}$. We show how to obtain even more accurate solutions with better choices of boundary conditions in Section \ref{sec:varying}.
\label{rem:localization} 
\end{remark}
\begin{proof}
  Let $\mu = \max_{\partial_N \Omega_{T,h}}(u-v)$ and define
\begin{equation}\label{eq:wsuper}
w(x) = v(x) + \gamma +  \mu \sum_{i=1}^n \left(e^{\lambda\left( \frac{x_i}{T}-1 \right)} + e^{-\lambda\left( \frac{x_i}{T}+1 \right)}\right)
\end{equation}
  for parameters  $\lambda,\gamma>0$ to be determined. For $x\in \partial_N \Omega_{T,h}$, there exists $i$ such that $x_i\geq T$ or $x_i\leq-T$, and so $w(x) \geq v(x) + \mu \geq u(x)$.   Note that we have
  \[|F(\nabla^2_h w) - F(\nabla^2_h v)| \leq \Lip_N(F) \|\nabla^2_h w - \nabla^2_h v\|_{N,\infty} \leq C\Lip_N(F)\frac{\mu\lambda^2N^2}{T^{2}}.\]
  Choosing  $\gamma = C\Lip_N(F)\mu\lambda^2N^2T^{-2}$ we find that
  \[w(x) - F(\nabla^2_h w(x)) \geq v(x) + \gamma - F(\nabla^2_h v(x)) - C\Lip_N(F)\mu\lambda^2N^2T^{-2} \geq 0.\]
  By Lemma \ref{lem:Tcomp} we have $w \geq u$ on $\Omega_{T,h}$, and so
\[u(x) - v(x) \leq \gamma + \mu \sum_{i=1}^n \left(e^{\lambda\left( \frac{x_i}{T}-1 \right)} + e^{-\lambda\left( \frac{x_i}{T}+1 \right)}\right)\]
for all $x\in \Omega_{T,h}$. For $x\in \Omega_{\alpha T,h}$ with $\alpha\in (0,1)$ we have
\[u(x) - v(x) \leq \gamma + 2n\mu e^{-\lambda(1-\alpha)}. \]
  Choosing $\lambda$ so that  $(1-\alpha)\lambda = \log\left( \frac{T^2}{\Lip_N(F) N^2}\right)$ completes the proof.
\end{proof}

\section{Numerical analysis of the prediction PDE}\label{sec:predictionanalysis}

This section is concerned with numerical analysis specific to the prediction from expert advice numerical scheme
\begin{equation}\label{eq:discrete_pde}
u(x) - \frac{1}{2}\max_{\v \in \B^n}\nabla^2_h u(x,\v) = g(x) \ \ \text{for } x\in \Zh.
\end{equation}
In particular, we show in Section \ref{sec:varying} how to use the translation property \eqref{eq:translation} to reduce the dimension by one, and in Section \ref{sec:sector} we show how to reduce the computational domain to a sector where the coordinates are ordered.

\subsection{Reducing the dimension}\label{sec:varying}

We show here how to reduce the problem from $n$ dimensional to $n-1$ dimensional. This requires the translation property \eqref{eq:translation} and is based on the following lemma.
\begin{lemma}\label{lem:complement}
Suppose $u$ satisfies the translation property \eqref{eq:translation} with $\v = \one$ and $c_\v = 1$.  Then for all $\v \in \B^n$ we have 
\begin{equation}\label{eq:complement}
  \nabla^2_h u(x,\v) = \nabla^2_h u(x,\one-\v).
\end{equation}
\end{lemma}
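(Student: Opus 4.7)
The plan is to apply the translation property directly to the two shifted values appearing in $\nabla^2_h u(x,\one-\v)$. The key observation is that $h(\one-\v) = h\one - h\v$, so each shift by $\pm h(\one-\v)$ can be decomposed as a shift by $\mp h\v$ composed with a shift by $\pm h\one$. The latter is governed by the translation hypothesis with $c_{\one}=1$, which says $u(y+sh\one)=u(y)+sh$ whenever $sh\in h\Z$.

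First I would rewrite
\[
u(x+h(\one-\v)) = u\bigl((x-h\v)+h\one\bigr) = u(x-h\v) + h,
\]
by applying \eqref{eq:translation} at the point $y=x-h\v$ with $s=h$ (valid since $x-h\v\in\Zh$). Analogously,
\[
u(x-h(\one-\v)) = u\bigl((x+h\v)-h\one\bigr) = u(x+h\v) - h,
\]
using \eqref{eq:translation} at $y=x+h\v$ with $s=-h$.

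Adding these two identities, the $\pm h$ terms cancel, yielding
\[
u(x+h(\one-\v)) + u(x-h(\one-\v)) = u(x+h\v) + u(x-h\v).
\]
Dividing by $h^2$ after subtracting $2u(x)$ from both sides and invoking the definition \eqref{eq:discrete_hessian} of the discrete Hessian in directions $\v$ and $\one-\v$ gives the claimed equality \eqref{eq:complement}.

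There is no real obstacle here; the entire argument is a one-line use of the fact that shifting the argument by $h\one$ changes $u$ by an additive constant of $\pm h$, which cancels when we symmetrize the second difference. The only thing to be slightly careful about is the lattice bookkeeping, i.e.\ checking that $x\pm h\v$ and $x\pm h(\one-\v)$ all lie in $\Zh$ so that the translation identity applies verbatim, but this is immediate because $\v,\one-\v\in\B^n\subset\Zn$.
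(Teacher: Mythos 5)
Your proof is correct and is essentially identical to the paper's: both decompose $h(\one-\v)$ as $h\one - h\v$, apply the $\one$-translation property to produce $\pm h$ shifts, and observe that these cancel upon symmetrizing the second difference. The only cosmetic difference is that you manipulate the $\one-\v$ side while the paper manipulates the $\v$ side.
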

\begin{proof}
By assumption we have
\begin{equation}\label{eq:translationones}
u(x + s\one) = u(x) + s 
\end{equation}
for all $s\in \Z_h$ and $x\in \Zh$. We now compute, using \eqref{eq:translationones}, that
\begin{align*}
  h^2\nabla^2_h u(x,\v) &= u(x+h\v) - 2u(x) + u(x-h\v) \\
  &=u(x + h\v - h\one) + h - 2u(x) + u(x -h\v+h\one) - h\\
  &=u(x - h(\one-\v)) - 2u(x) + u(x + h(\one - \v))\\
  &=h^2 \nabla^2_h u(x,\one-\v),
\end{align*}
which completes the proof.
\end{proof}

For $x\in \R^{n-1}$ and $a\in \R$ we define $(x,a)\in \R^n$ by
\[(x,a) = (x_1,x_2,\dots,x_{n-1},a).\]
The next lemma shows how we can use the translation property \eqref{eq:translation} to reduce \eqref{eq:discrete_pde} to a similar equation in one less variable.
\begin{lemma}\label{lem:reduction}
Let $u$ be the solution of \eqref{eq:discrete_pde} and suppose $u$ satisfies the translation property \eqref{eq:translation} with $\v = \one$ and $c_\v = 1$. Define $w,f:\Zmh \to \R$ by  $w(x):=u(x,0)$ and $f(x)=g(x,0)$. The following hold.
\begin{enumerate}[\normalfont(i)]
\item The function $w$ satisfies
\begin{equation}\label{eq:reduction}
  w(x) - \frac{1}{2}\max_{\v \in \B^{n-1}}\nabla^2_h w(x,\v) = f(x) \ \ \text{for all } \ x\in \Zmh.
\end{equation}
\item   If $g$ is permutation invariant, then so is $w$, and furthermore $w$ satisfies
\begin{equation}\label{eq:translation_red}
  w(x)  =w\left(x - x_i(\one+\e_i)\right) + x_i \ \ \text{for all } x\in \Zmh \text{ and } i=1,\dots,n-1.
\end{equation}
\end{enumerate}
\end{lemma}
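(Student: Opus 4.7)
The key observation is Lemma \ref{lem:complement}, which says that $\nabla^2_h u(x,\v) = \nabla^2_h u(x,\one-\v)$ for every $\v \in \B^n$. Since exactly one of $\v$ and $\one - \v$ has last coordinate equal to $0$, when we take the maximum over $\B^n$ at the point $(x,0)$ we may restrict to vectors of the form $(\tilde\v,0)$ with $\tilde\v \in \B^{n-1}$. For such a vector, the definition \eqref{eq:discrete_hessian} gives
\[
\nabla^2_h u((x,0),(\tilde\v,0)) = \frac{u((x+h\tilde\v,0)) - 2 u((x,0)) + u((x-h\tilde\v,0))}{h^2} = \nabla^2_h w(x,\tilde\v).
\]
Hence $\max_{\v\in\B^n}\nabla^2_h u((x,0),\v) = \max_{\tilde\v\in\B^{n-1}}\nabla^2_h w(x,\tilde\v)$, and evaluating \eqref{eq:discrete_pde} at $(x,0)$ yields \eqref{eq:reduction}.

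\textbf{Plan for part (ii), permutation invariance.} Let $\sigma$ be a permutation of $\{1,\dots,n-1\}$ and let $\tilde\sigma$ be its extension to $\{1,\dots,n\}$ fixing $n$. Then $\tilde\sigma(x,0) = (\sigma(x),0)$, so
\[
w(\sigma(x)) = u((\sigma(x),0)) = u(\tilde\sigma(x,0)) = u((x,0)) = w(x),
\]
where the third equality uses Lemma \ref{lem:otherprop}(ii), which applies because $F(X) = \tfrac12\max_{\v\in\B^n}X(\v)$ and $g$ are permutation invariant.

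\textbf{Plan for part (ii), the translation identity.} Let $\tau_i$ be the permutation on $\{1,\dots,n\}$ that swaps $i$ and $n$. Permutation invariance of $u$ gives
\[
w(x) = u((x,0)) = u(\tau_i(x,0)) = u(x_1,\dots,x_{i-1},0,x_{i+1},\dots,x_{n-1},x_i).
\]
Now apply the $\one$-translation property of $u$ with $s = -x_i$ (provided $x_i \in \Z_h$, which is automatic since $x\in \Z_h^{n-1}$):
\[
u(x_1,\dots,x_{i-1},0,x_{i+1},\dots,x_{n-1},x_i) = u(x_1-x_i,\dots,-x_i,\dots,x_{n-1}-x_i,0) + x_i,
\]
where $-x_i$ appears in the $i$-th slot. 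The last expression is $w(y) + x_i$ where $y\in\Z_h^{n-1}$ has $y_j = x_j - x_i$ for $j\ne i$ and $y_i = -x_i$, i.e.\ $y = x - x_i(\one + \e_i)$. Combining yields \eqref{eq:translation_red}.

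\textbf{Expected obstacles.} None of the steps are deep; the only subtlety is to check that the translation we apply stays on the grid $\Z_h^n$, which is why the hypothesis restricts $s$ to values with $s\v\in\Zh$ in Definition \ref{def:translationprop}. Since $x\in\Z_h^{n-1}$ implies $x_i\in\Z_h$ and $x_i\one\in\Z_h^n$, this is fine. The pairing trick via $\v \leftrightarrow \one-\v$ is the crucial ingredient that makes the dimension reduction work cleanly.
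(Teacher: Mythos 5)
Your proof is correct and follows essentially the same route as the paper: part (i) uses Lemma \ref{lem:complement} to pair $\v$ with $\one-\v$ and restrict the maximum to directions with vanishing last coordinate, and part (ii) combines permutation invariance with the $\one$-translation. The only cosmetic difference is that in part (ii) you apply the swap $\tau_i$ before the translation while the paper translates first and then applies $\sigma_{i,n}$; the two orders commute here and yield the identical identity.
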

\begin{proof}
By Lemma \ref{lem:complement}, for any $x\in \Zmh$ we have
\[\max_{\v \in \B^n}\nabla^2_h u((x,0),\v)  = \max_{\v \in \B^{n-1}}\nabla^2_h u((x,0),(\v,0)) = \max_{\v \in \B^{n-1}}\nabla^2_h w(x,\v).\]
Since $u$ satisfies \eqref{eq:discrete_pde} we have
  \[w(x) - \frac{1}{2}\max_{\v \in \B^{n-1}}\nabla^2_h w(x,\v) = g(x,0)=f(x),\]
which completes the proof.

  (ii) Since $g$ is permutation invariant, so is $u$ (by Lemma \ref{lem:otherprop}) and hence $w$. Let $\sigma_{ij}$ denote the permutation swapping coordinates $i$ and $j$. Let $x\in \R^{n-1}$ and let $y=(x,0)$. For any $1 \leq i\leq n-1$ we use the translation property and permutation invariance to obtain
\begin{align*}
  w(x)=u( \left(x,0\right)) &= u\left( (x,0) - x_i\one\right) + x_i\\
  &= u\left( \left(x - x_i\one,-x_i\right)\right) + x_i\\
  &=u\left(\sigma_{i,n}\left(x - x_i\one,-x_i\right)\right) + x_i\\
  &=u\left(x - x_i\one - x_i\e_i,0\right) + x_i\\
  &=w\left(x - x_i\one - x_i\e_i\right) + x_i,
\end{align*}
which completes the proof.
\end{proof}

By Lemma \ref{lem:reduction} and Remark \ref{rem:interior-domain-accuracy} we may instead solve the equation
\begin{equation}\label{eq:w_pde}
\left\{
\begin{aligned}
w(x) - \frac{1}{2}\max_{\v\in \B^{n-1}}\nabla^2_h w(x,\v) &= g(x,0),&& \text{if }  x \in \Omega_{T,h}\\
w(x) &= g(x,0) ,&& \text{if } x \in \partial_1 \Omega_{T,h}
\end{aligned}
\right.
\end{equation}
in dimension $n-1$, where we take $T=mh$ to be a multiple of the grid resolution. Provided we restrict our attention to the localized interior set $\Omega_{\alpha T,h}$ for some $\alpha \in (0,1)$, then as per Remark \ref{rem:interior-domain-accuracy} we incur an $O(1/T^2)$ error term, up to logarithmic factors.

\subsection{Reducing the domain to a sector}\label{sec:sector}

It turns out that in addition to reducing the dimension of the equation from $n$ to $n-1$, we can also drastically reduce the size of the computational grid by restricting our attention to the sector  
\begin{equation}\label{eq:sector}
\D_n = \{x\in \Zh\, : \, x_1 \geq x_2  \geq \cdots \geq x_n\},
\end{equation}
and the positive  sector
\begin{equation}\label{eq:sector_positive}
\D^+_n = \{x\in \D_n\, : \, x_n \geq 0\}.
\end{equation}
Whenever $g$ is permutation invariant, e.g., the max-regret $g(x)=\max\{x_1,\dots,x_n\}$, the solution $u$ of \eqref{eq:discrete_pde} and the reduced solution $w(x)=u(x,0)$ are also permutation invariant. As we show in this section, this, combined we the translation property, or \eqref{eq:translation_red}, allows us to reduce the domain of the discrete PDE \eqref{eq:discrete_pde} to $\D_n^+$, which is drastically smaller than the full computational grid $\Omega_{T,h}$. 

In order to do this in a computational setting, for $x\in \D_{n-1}^+$ and $v\in \B^{n-1}$, we need to be able to evaluate $w(x+\v)$ and $w(x-\v)$ in terms of only the values of $w$ within the positive sector $\D_{n-1}^+$. This will allow us to evaluate the discrete second derivative $\nabla^2_h w(x,\v)$ without reference to the values of $u$ outside of $\D^{n-1}_+$. To do this, we need to define two operations. First, let $\pi_n:\R^n\to \R^n$ be the \emph{sorting function}  that sorts the coordinates of an $n$-dimensional vectors. Thus, $\pi_n(\Zh) = \D_n$. We also define the function $\xi:\D^n\to \R^n$ by 
\begin{equation}\label{eq:xi_def}
\xi_n(x) = 
\begin{cases}
x,& \text{if } x\in \D_n^+,\\
x +h(\one + \e_i),& \text{where } x_1,\dots,x_{i-1} \geq 0 > x_i\geq \cdots \geq x_n.
\end{cases}
\end{equation}
To indicate the cases above, we write
\[\one_{\xi_n}(x) = 
\begin{cases}
0,& \text{if } \xi_n(x) = x\\
1,& \text{otherwise.}
\end{cases}\]

The following lemma shows how to evaluate $w(x\pm \v)$ within the positive sector $\D_{n-1}^+$. 
\begin{lemma}\label{lem:boundary_cond_sector}
Suppose $w:\Z_h^{n-1} \to \R$ is permutation invariant and satisfies \eqref{eq:translation_red}. Let $T>0$,  $x\in \D_{n-1}^+ \cap [0,T-h]^{n-1}$ and $\v\in \B^{n-1}$. Then the following hold.
\begin{enumerate}[(i)]
\item For $y:=\pi_{n-1}(x+h\v)$ we have $y \in \D_{n-1}^+\cap [0,T]^{n-1}$ and $w(x + h\v) = w(y)$. 
\item For $y:=\pi_{n-1}(x-h\v)$ we have $\xi_{n-1}(y) \in \D_{n-1}^+\cap [0,T]^{n-1}$ and 
\[w(x - h\v) = w(\xi_{n-1}(y)) - h\one_{\xi_{n-1}}(y).\]
\end{enumerate}
\end{lemma}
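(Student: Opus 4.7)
The plan is to verify the two parts essentially by unravelling the definitions, using permutation invariance of $w$ for the trivial containment/equality claims and invoking the translation identity \eqref{eq:translation_red} only to handle the single awkward case in part (ii) where the sorted vector leaves $\D_{n-1}^+$.

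For part (i), since $x\in\D_{n-1}^+\cap[0,T-h]^{n-1}$ and $\v\in\B^{n-1}$, every coordinate of $x+h\v$ lies in $[0,T]$: the lower bound holds because $x_i\ge 0$ and $v_i\ge 0$, and the upper bound because $x_i\le T-h$ and $v_i\le 1$. Sorting preserves both the componentwise lower bound and the sup-norm bound, so $y=\pi_{n-1}(x+h\v)\in\D_{n-1}^+\cap[0,T]^{n-1}$. The equality $w(x+h\v)=w(y)$ then follows from permutation invariance of $w$. This part is essentially bookkeeping.

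For part (ii), the coordinates of $x-h\v$ lie in $[-h,T-h]$, so after sorting we obtain $y\in\D_{n-1}\cap[-h,T]^{n-1}$, and $w(x-h\v)=w(y)$ by permutation invariance. I would split into two cases according to whether $y\in\D_{n-1}^+$. If $y_{n-1}\ge 0$, then $\xi_{n-1}(y)=y$ and $\one_{\xi_{n-1}}(y)=0$, so $w(x-h\v)=w(y)=w(\xi_{n-1}(y))-h\cdot 0$, as required. Otherwise, let $i$ be the smallest index with $y_i<0$. The key observation here, which is what I expect to be the only real subtlety, is that any negative coordinate of $y$ must equal exactly $-h$: it arises as $x_j-hv_j$ for some $j$ with $x_j\ge 0$ and $v_j\in\{0,1\}$, so the value is at least $-h$, and being negative forces it to be $-h$ (since $x_j\in h\Z_{\ge 0}$). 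Hence $y_i=y_{i+1}=\cdots=y_{n-1}=-h$, which puts us exactly in the second branch of \eqref{eq:xi_def}, giving $\xi_{n-1}(y)=y+h(\one+\e_i)$ and $\one_{\xi_{n-1}}(y)=1$.

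With this structure in hand, I apply \eqref{eq:translation_red} at the index $i$: since $y_i=-h$,
\[
w(y)=w\bigl(y-y_i(\one+\e_i)\bigr)+y_i=w\bigl(y+h(\one+\e_i)\bigr)-h=w(\xi_{n-1}(y))-h,
\]
which is the desired identity. It remains to check $\xi_{n-1}(y)\in\D_{n-1}^+\cap[0,T]^{n-1}$: its coordinates are $y_j+h$ for $j<i$, which satisfy $y_j+h\ge h$ and $y_j+h\le T$ since $y_j\le y_1\le T-h$ (the top entry of $y$ is bounded by the maximum of $x-h\v$, which is at most $T-h$); the $i$-th coordinate equals $h$; and the later coordinates equal $0$. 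The ordering $y_1+h\ge\cdots\ge y_{i-1}+h\ge h\ge 0=\cdots=0$ is immediate from $y_{i-1}\ge 0$, completing the verification. The only potential obstacle is confirming the ``all negative coordinates equal $-h$'' observation, which is what makes the single shift by $h(\one+\e_i)$ enough to push $y$ back into the positive sector.
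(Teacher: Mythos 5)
Your proposal is correct and follows essentially the same route as the paper: the central observation in both is that every negative coordinate of $y=\pi_{n-1}(x-h\v)$ must equal exactly $-h$ (because $x\in\D_{n-1}^+$ has nonnegative grid coordinates and $\v$ is binary), which places $y$ in the second branch of the definition of $\xi_{n-1}$ and lets one apply \eqref{eq:translation_red} at the smallest negative index to land back in the sector. You are somewhat more explicit than the paper in verifying the containment $\xi_{n-1}(y)\in\D_{n-1}^+\cap[0,T]^{n-1}$, which the paper leaves implicit, but this is a matter of thoroughness rather than a different argument.
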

\begin{proof}
Part (i) follows directly from the permutation invariance of $w$, and that $\v$ is a binary vector.

For part (ii), let $y = \pi_{n-1}(x - h\v) \in \D_{n-1}$. If $y\in \D_{n-1}^+$, then $\xi_{n-1}(y)=y$, $\one_{\xi_{n-1}}(y)=0$, and the result follows from the permutation invariance of $w$, as in part (i). If $y\not\in \D_{n-1}^+$, then $\xi_{n-1}(y)\neq y$ and $\one_{\xi_{n-1}}(y)=1$. Since $x\in \D_{n-1}^+$ we must have 
\[y_1\geq \cdots \geq y_{i-1}\geq 0 > -h=y_i = \cdots = y_{n-1},\]
for some $1 \leq i \leq n-1$. Therefore 
\[\xi_{n-1}(y) = y + h(\one + \e_i) = y - y_i(\one + \e_i),\]
and so by \eqref{eq:translation_red} and the permutation invariance of $w$ we have 
\[w(x - h\v) = w(y) = w(\xi_{n-1}(y)) + y_i = w(\xi_{n-1}(\pi_{n-1}(x-h\v))) - h,\]
which completes the proof.
\end{proof}

Lemma \ref{lem:boundary_cond_sector} allows us to restrict the reduced $n-1$ dimensional equation for $w$ to the sector $\D^+_{n-1}$, yielding the equation
\begin{equation}\label{eq:sector_pde}
\left\{
\begin{aligned}
w(x) - \frac{1}{2}\max_{\v\in \B^{n-1}}\nabla^2_h w(x,\v) &= g(x,0),&& \text{if }  x\in \D^+_{n-1}\cap \{x_1 \leq T - h\}\\
w(x) &= g(x,0) ,&& \text{if } x \in \D^+_{n-1}\cap \{x_1=T\},
\end{aligned}
\right.
\end{equation}
where $T=mh$ is a multiple of the grid resolution. By Lemma \ref{lem:boundary_cond_sector} we can compute the derivative $\nabla^2_h w(x,\v)$ for $x\in \D^+_{n-1}\cap \{x_1 \leq T - h\}$ via
\begin{equation}\label{eq:boundary_cond_d2v}
\nabla^2_h w(x,\v) = \frac{w(y_+) + w(\xi_{n-1}(y_-)) - h\one_{\xi_{n-1}}(y_-) - 2w(x)}{h^2},
\end{equation}
where $y_+ = \pi_{n-1}(x + h\v)$ and $y_- = \pi_{n-1}(x - h\v)$. By Lemma \ref{lem:boundary_cond_sector} the expression on the right hand side of \eqref{eq:boundary_cond_d2v} involves evaluating $w$ only at points in the sector $\D^+_{n-1}\cap\{x_1\leq T\}$. This is essentially equivalent to setting boundary conditions on the sector $\D^+_{n-1}$, though the explicit identification of those boundary conditions is a complicated task that we do not undertake here.

Now, the number of grid points in the full computational grid $[-T,T]^n\cap \Z_h^n$ grows exponentially in the dimension $n$ as $\O((2Th^{-1})^n)$, which is known as the \emph{curse of dimensionality}. However, the number of grid points in the sector $\D^+_{n-1}\cap\{x_1\leq T\}$ grows much slower in $n$, as the following lemma shows. 
\begin{lemma}\label{lem:num_grid_points}
Let $G_{n,T}$ denote the number of grid points in the sector $\D^+_{n}\cap\{x_1\leq T\}$, where $T=mh$ with $m$ a positive integer. Then $G_{n,T}\leq \frac{1}{n!}(Th^{-1} + n)^n$.
\end{lemma}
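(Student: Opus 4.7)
The plan is to reduce this to a standard lattice-point counting problem and apply a well-known binomial identity. Since $T = mh$, after the change of variables $y_i = x_i/h$, counting grid points in $\D_n^+ \cap \{x_1 \leq T\}$ becomes counting integer tuples $(y_1,\dots,y_n) \in \Z^n$ satisfying
\[
m \geq y_1 \geq y_2 \geq \cdots \geq y_n \geq 0.
\]

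The key step is to observe that such weakly decreasing integer sequences are in bijection with multisets of size $n$ drawn from $\{0,1,\dots,m\}$ (each multiset has a unique weakly decreasing listing). The standard stars-and-bars count gives exactly
\[
G_{n,T} = \binom{m+n}{n}.
\]
Alternatively, one may establish the same bound directly by induction on $n$: the number of such tuples with $y_1 = k$ equals $G_{n-1,kh}$, and summing over $k=0,\dots,m$ together with the hockey-stick identity $\sum_{k=0}^m \binom{k+n-1}{n-1} = \binom{m+n}{n}$ produces the closed form.

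Finally, I would bound the binomial coefficient by writing
\[
\binom{m+n}{n} = \frac{(m+n)(m+n-1)\cdots(m+1)}{n!} \leq \frac{(m+n)^n}{n!},
\]
and substituting $m = Th^{-1}$ yields the claimed inequality $G_{n,T} \leq \frac{1}{n!}(Th^{-1}+n)^n$. There is no real obstacle here — the argument is purely combinatorial and routine — so the only thing to be careful about is correctly handling the endpoints (including both $y_i = 0$ and $y_i = m$) and verifying the bijection counts each tuple once.
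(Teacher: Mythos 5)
Your proof is correct, and it actually delivers more than the lemma requires: you identify the exact count $G_{n,T} = \binom{m+n}{n}$ via the multiset bijection (or, equivalently, the hockey-stick identity), and then bound the binomial coefficient. The paper takes a slightly different route: it sets up the same recursion $a_{m,n} = \sum_{i=0}^{m} a_{i,n-1}$ (which is the hockey-stick recursion in disguise) but, rather than evaluate it in closed form, bounds the inner sum by the integral $\int_0^{m+1}\frac{1}{n!}(x+n)^n\,dx = \frac{1}{(n+1)!}(m+n+1)^{n+1}$ and proceeds by induction directly on the stated inequality. The two arguments are essentially parallel; yours buys an exact formula and makes the asymptotics transparent, while the paper's integral comparison is a bit more self-contained since it never needs to name or justify the binomial identity. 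Either is perfectly acceptable, and your observation that the endpoints need care (both $y_i=0$ and $y_i=m$ are included, giving $m+1$ choices per coordinate, hence $\binom{(m+1)+n-1}{n}$) is exactly the right thing to check.
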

\begin{proof}
Let $T = mh$ and define
\begin{equation}\label{eq:amn}
a_{m,n} = \sum_{i_1=0}^{m}\sum_{i_2=0}^{i_1}\cdots \sum_{i_{n-1}=0}^{i_{n-2}}\sum_{i_n=0}^{i_{n-1}}1.
\end{equation}
Note that $G_{n,T}=a_{m,n}$. Hence, the proof boils down to showing that $a_{m,n}\leq \frac{1}{n!}(m+n)^{n}$. We will prove this with induction on $n$, using the recursive identity
\[a_{m,n}=\sum_{i=0}^m a_{i,n-1},\]
which follows directly from \eqref{eq:amn}. For the base step of $n=1$, we have $a_{m,1} = m+1$ by direct computation.  For the inductive step, assume that for some $n\geq 1$ and all $m$ we have $a_{m,n}\leq \frac{1}{n!}(m+n)^{n}$. Then we compute
\[a_{m,n+1} = \sum_{i=0}^m a_{i,n} \leq \sum_{i=0}^m \frac{1}{n!}(i+n)^n \leq \int_0^{m+1}\frac{1}{n!}(x + n)^n \, dx = \frac{1}{(n+1)!}(m+n+1)^{n+1}, \]
which completes the inductive step and hence the proof.
\end{proof}

If we choose $h$ small enough so that $n \leq Th^{-1}$, then the bound in Lemma \ref{lem:num_grid_points} implies 
\[G_{n,T} \leq \frac{(2 T h^{-1})^n}{n!}.\]
This is a factor of $n!$ smaller than the number of grid points on the full computational domain $[-T,T]^n\cap \Z_h^n$, which reflects the exponentially small size of the sector $\D^+_{n}$.  In order to understand better how $G_{n,T}$ scales with $n$ in Lemma \ref{lem:num_grid_points}, we use a version of Stirling's formula $n! \geq \sqrt{2\pi n} (n/e)^n$ \cite{robbins1955remark} to obtain
\[G_{n,T}\leq \frac{1}{n!}(Th^{-1} + n)^n \leq \frac{e^n}{\sqrt{2\pi n}}(1 + Th^{-1}n^{-1})^n \leq \frac{e^{n + Th^{-1}}}{\sqrt{2\pi n}}.\]
While this complexity is still exponential in $n$ and $h^{-1}$, these two quantities do not directly interact. For example, with the full grid $[-T,T]^n\cap \Z_h^n$, increasing from $n$ to $n+1$ dimensions requires a factor of $\O(h^{-1})$ more grid points, while for the sector $\D^+_{n}\cap\{x_1\leq T\}$ we require at most $e$ times as many grid points, which is independent of the grid resolution $h$. However, it is important to point out that the curse of dimensionality is not overcome. It is rather more accurate to say that we have postponed the curse to larger values of $n$. For example, in Section \ref{sec:numerics}, we conduct experiments with up to $n=7$ experts, using the reduced computational grid in dimension $n-1=6$, with a grid resolution of $h=0.1$. Using the full computational grid we are only able to compute solutions of the $n=4$ expert problem, on the reduced $n-1=3$ dimensional grid. 

\begin{remark}\label{rem:}
It is important to point out that the computational costs are more expensive than the memory required to store the grid, since we must compute the maximum over $\v\in \B^n$ in the operator, which is a maximum over $2^n$ directions. Thus, the computational cost admits an additional $2^n$ factor over the memory costs. 
\end{remark}

\section{Numerical results}\label{sec:numerics}

In this section we present numerical results for $n=2$ up to $n=10$ experts. In all cases, we solve the reduced equation \eqref{eq:reduction} for $w(x)=u(x,0)$, which is a problem in $n-1$ dimensions. In Section \ref{sec:full_grid} we present experiments on the full computational grid, while in Section \ref{sec:sector_grid} we present results on the smaller and more efficient sector grid. In each case, to solve the equation $w - F(\nabla^2_h w)  = g$, we iterate the fixed point scheme
\begin{equation}\label{eq:wk_iter}
w_{k+1} = (1-dt)w_k + dt(F(\nabla^2_h w_k) + g)
\end{equation}
until 
\[|w - F(\nabla^2_h w)  - g| \leq \frac{h^2}{100}.\]
In this section we use $F$ defined in \eqref{eq:ourF}. In all cases, we use $T=5$ and restrict the solutions to the unit box $[0,1]^{n-1}$. The code for all experiments is available on GitHub: \url{https://github.com/jwcalder/PredictionPDE}.

\subsection{Full computational grid}\label{sec:full_grid}

\begin{figure}[!t]
\centering
\subfloat[$n=2$ experts]{
\includegraphics[width=0.39\textwidth]{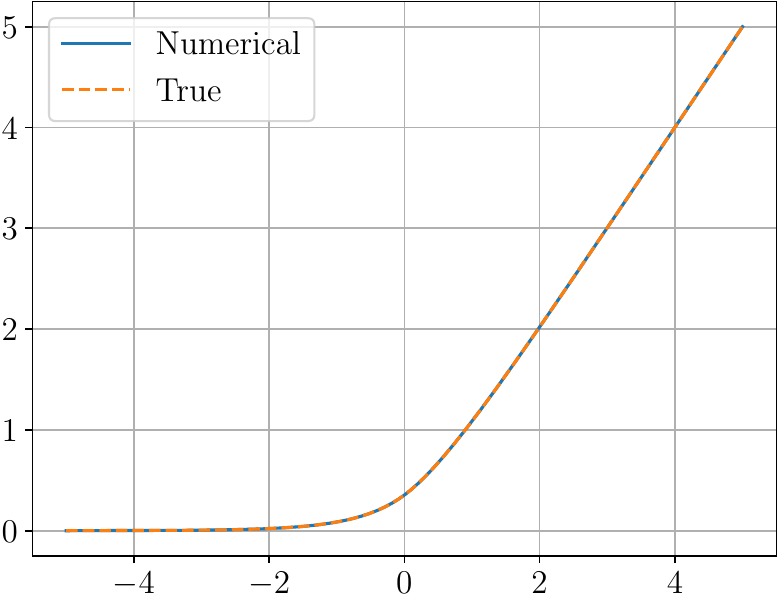}
\includegraphics[width=0.45\textwidth]{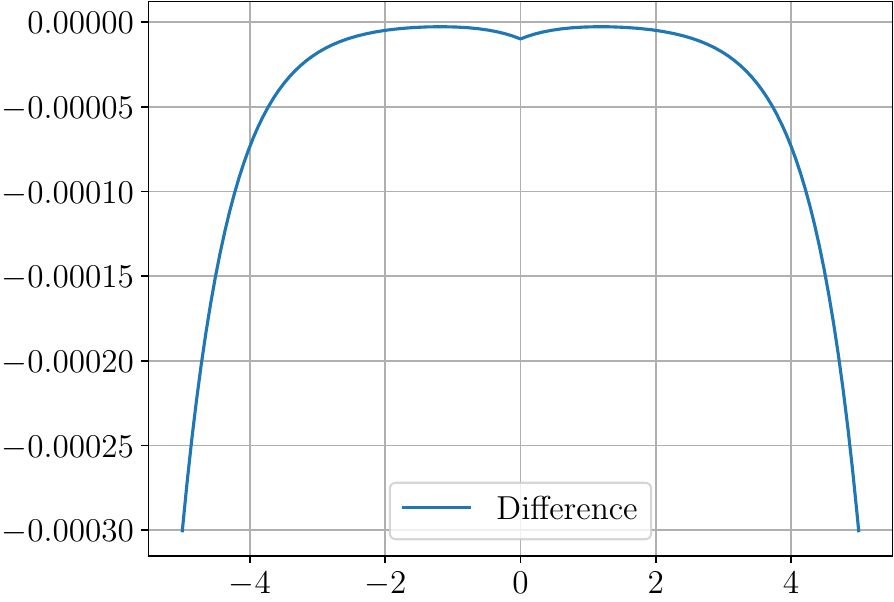}}\\
\subfloat[$n=3$ experts (numerical solution, true solution, difference) ]{
\includegraphics[width=0.32\textwidth]{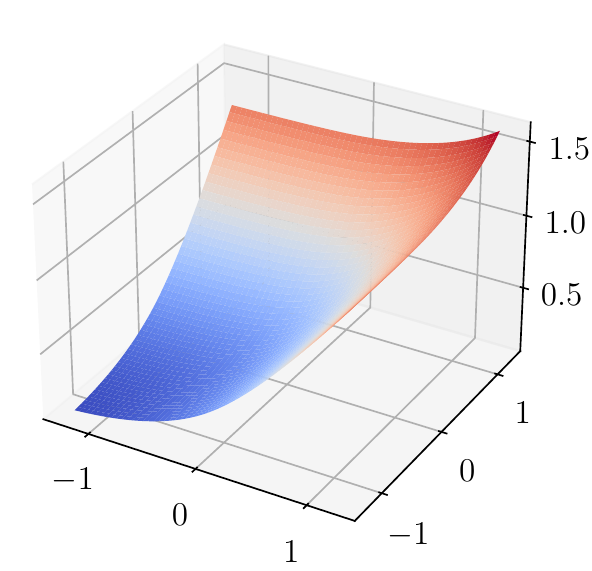}
\includegraphics[width=0.32\textwidth]{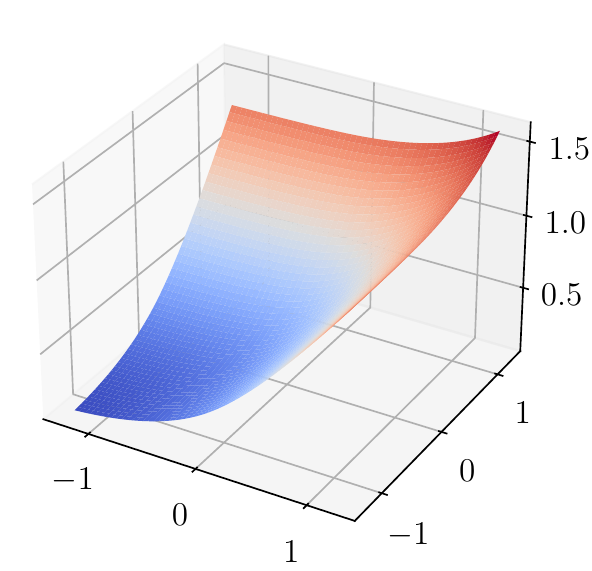}
\includegraphics[width=0.32\textwidth]{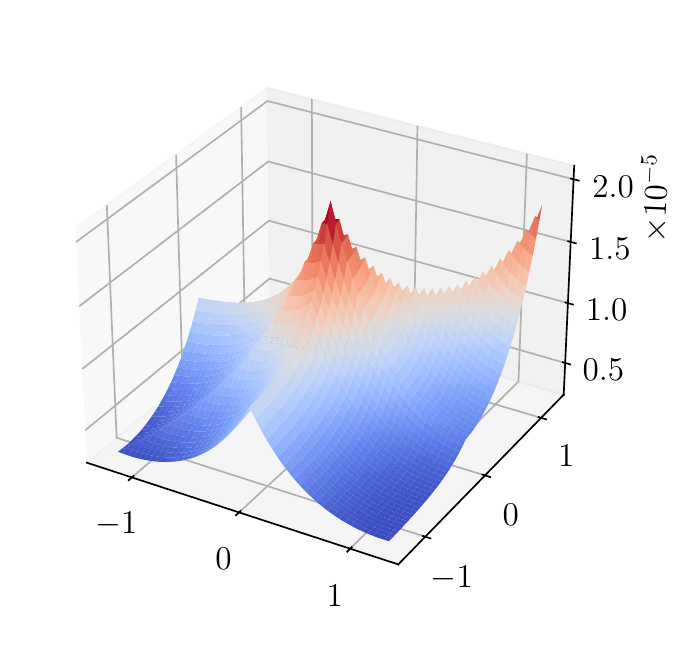}}
\caption{Plots of the numerical solution $w$ versus the true solutions for $n=2$ and $n=3$ experts.}
\label{fig:demo}
\end{figure}

We first present results on the full computational grid, so we solve the equation \eqref{eq:w_pde}. Due to the curse of dimensionality, we can only solve the equation for $n=2,3,4$ experts. The finest grid we used was $h=0.01$ for $n\leq 3$ and $h=0.025$ for $n=4$. In Figure \ref{fig:demo} we show plots of the numerical solutions versus the true solutions for $n=2,3$ experts. Since we solve the reduced $d=n-1$ dimensional equation, these are PDEs in $d=1$ and $d=2$ dimensions. The $n=2$ expert solution is accurate on the full domain $[-5,5]$ since the boundary condition $u(\pm 5) = \max\{5,0\}$ is exponentially accurate; see \eqref{eq:solution2experts}. For $n=3$ experts, the solution loses accuracy away from the restricted domain $[-1,1]^2$. 

\begin{figure}[!t]
\centering
\subfloat[Convergence rates]{\includegraphics[width=0.48\textwidth]{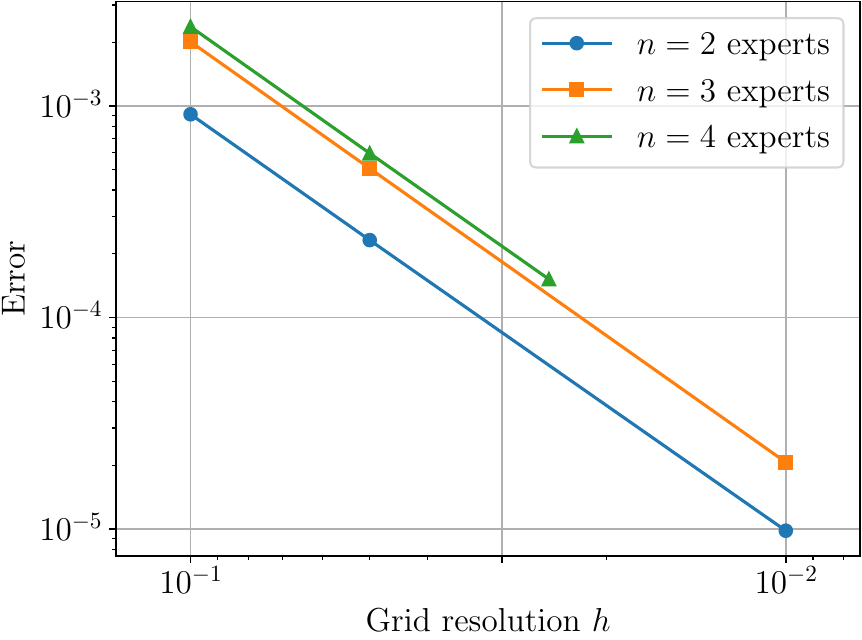}}
\hfill
\subfloat[$n=2$ expert strategies]{\includegraphics[width=0.465\textwidth]{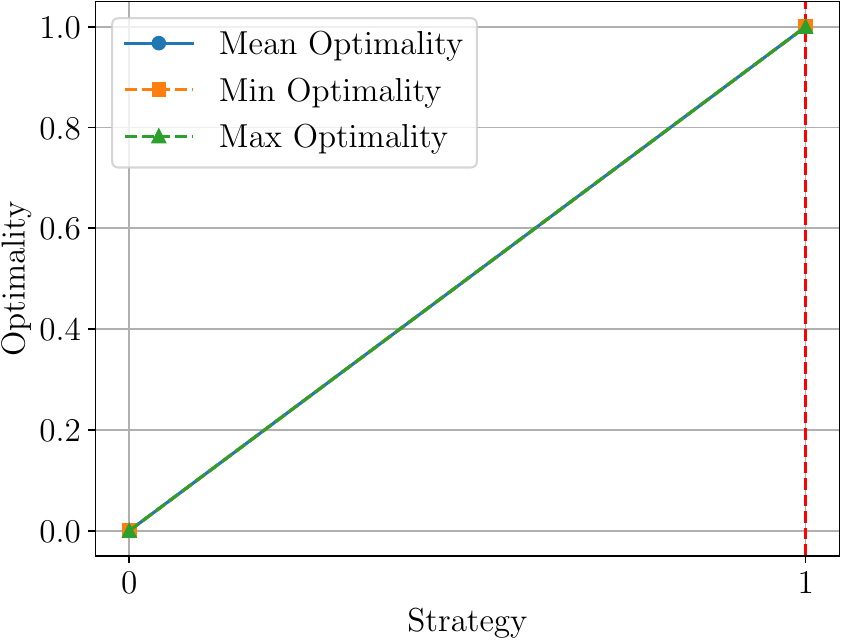}}\\
\subfloat[$n=3$ expert strategies]{\includegraphics[width=0.465\textwidth]{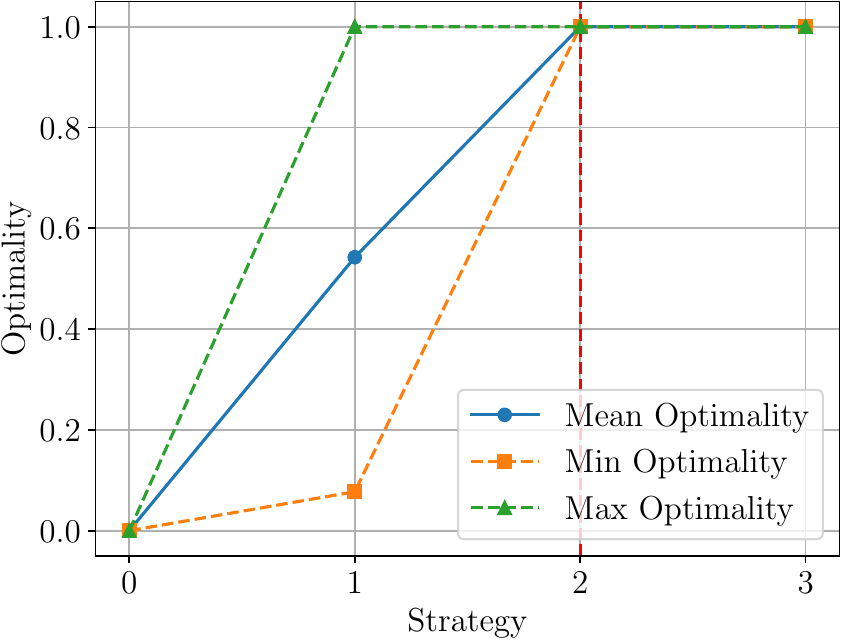}}
\hfill
\subfloat[$n=4$ expert strategies]{\includegraphics[width=0.465\textwidth]{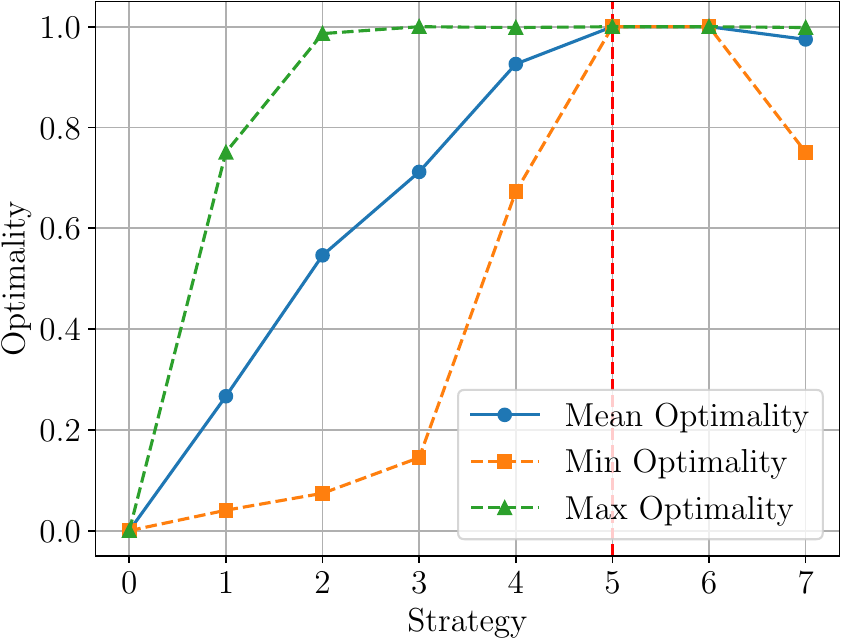}}
\caption{Convergence rates and optimal strategies for $n=2,3,4$ experts, computed from the numerical solutions. The dashed red line indicates the COMB strategy, which is numerically observed to be optimal for $n=2,3,4$ experts, as the theory predicts. }
\label{fig:opt_strat}
\end{figure}

In Figure \ref{fig:opt_strat} (a) we show a convergence analysis for varying grid resolution $h$ for $n=2,3,4$ experts, where the exact solutions of the PDEs are known. In all cases we observe second order $\O(h^2)$ convergence rates. Figure \ref{fig:opt_strat} also shows the optimality of each adversarial strategy. We define the optimality of strategy $\v\in \B^{n-1}$ at a grid point $x$ by the ratio
\[\opt(x,\v) = \frac{\nabla^2_h w(x,\v)}{\max_{\p\in \B^{n-1}}\nabla^2_h w(x,\p)} = \frac{\nabla^2_h w(x,\v)}{2(u(x) - g(x,0))}.\]
An optimality value of $\opt(x,\v)=1$ indicates that strategy $\v$ is optimal at grid point $x$. In order to measure the optimality of all the competing strategies, we plot the minimum, maximum, and average optimality values over the intersection of the computational grid with the positive sector $\D^{+}_{n-1}$. Since the solution is permutation invariant, the scores are the same in all other sectors.  If the \emph{minimum} score is $1$, up to the numerical precision $O(h^2)$, then that strategy is globally optimal over the unit box $[-1,1]^{n-1}$. In Figure \ref{fig:opt_strat} we denote the strategies, which are binary vectors, by the decimal number that strategy corresponds to, and we indicate the COMB strategy with a dashed red line. Since $\v$ and $\one-\v$ are equivalent strategies, we only show the optimality scores for the first half of the strategies; the plot for the second half is a mirror image of the plots shown.

In Figure \ref{fig:opt_strat} (b) we see that strategies $1=(0,1)$ and $2=(1,0)$ are optimal, both of which correspond to the COMB strategy. In Figure \ref{fig:opt_strat} (c) we see that for $n=3$ experts, the COMB strategy $2=(0,1,0)$ is optimal, as well as the non-COMB strategy $3=(0,1,1)$. In this case, we recall that the complement strategies $(1,0,1)$ and $(1,0,0)$ are equivalent, and hence also optimal, but are not depicted. For $n=4$ experts in Figure \ref{fig:opt_strat} (d) we see that the COMB strategy $5=(0,1,0,1)$ is optimal, as well as the non-COMB strategy $6=(0,1,1,0)$. All of these results have already been established theoretically in previous work; see Section \ref{sec:background}. We presented these results to verify that the numerical solvers are working properly and give results that agree with previous work.

We are unable to solve the PDE \eqref{eq:w_pde} for $w$ on a full computational grid for the $n=5$ expert problem, so for this we must resort to the sparse grid method that restricts attention to the positive sector. 

\subsection{Sparse grids}\label{sec:sector_grid}

We now turn to computations on the positive sector $\D^+_{n-1}\cap [0,T]^{n-1}$, which is far smaller than the full grid and allows us to run experiments for $n=5,6,7,8,9,10$ experts. In this case, we are solving equation \eqref{eq:sector_pde} using the methods described in Section \ref{sec:sector}. To facilitate computations, we flatten the sector $\D^+_{n-1}\cap [0,T]^{n-1}$ to a one-dimensional array, and store the solution $w$ as a one dimensional array with linear indexing. We pre-computed and stored the stencils for second derivatives in all directions $\v\in \B^{n-1}$ using the methods outlined in Section \ref{sec:sector} prior to the running the iteration \eqref{eq:wk_iter} to solve the equation. All code is written in Python using the Numpy package and fully vectorized operations. 

\begin{table}
\centering
\begin{tabular}{||c|c|c|c||}
\hline
Dimension ($d=n-1$) & Grid resolution $h$ & Sector $\D^+_d\cap [0,5]^d$ & Full grid $\Z_h^d\cap[-5,5]^d$\\\hhline{|=|=|=|=|}
4&0.025&$7\times 10^7$&$3\times 10^{10}$\\\hline
5&0.050&$1\times 10^8$&$3\times 10^{11}$\\\hline
6&0.100&$3\times 10^7$&$1\times 10^{12}$\\\hline
7&0.200&$3\times 10^6$&$8\times 10^{11}$\\\hline
8&0.250&$3\times 10^6$&$7\times 10^{12}$\\\hline
9&0.350&$8\times 10^5$&$1\times 10^{13}$\\\hline
\end{tabular}
\caption{Number of grid points in the sector computational domain $\D^+_d\cap [0,5]^d$ compared to the full grid $\Z_h^d\cap[-5,5]^d$. We use fewer grid points as the dimension increases since evaluating the PDE involves computing derivatives in $2^d$ directions, so the computational time and memory storage increase exponentially with $d$.}
\label{tab:num_grid_points}
\end{table}

\begin{figure}[!t]
\centering
\subfloat[$n=5$ expert strategies]{\includegraphics[width=0.5\textwidth]{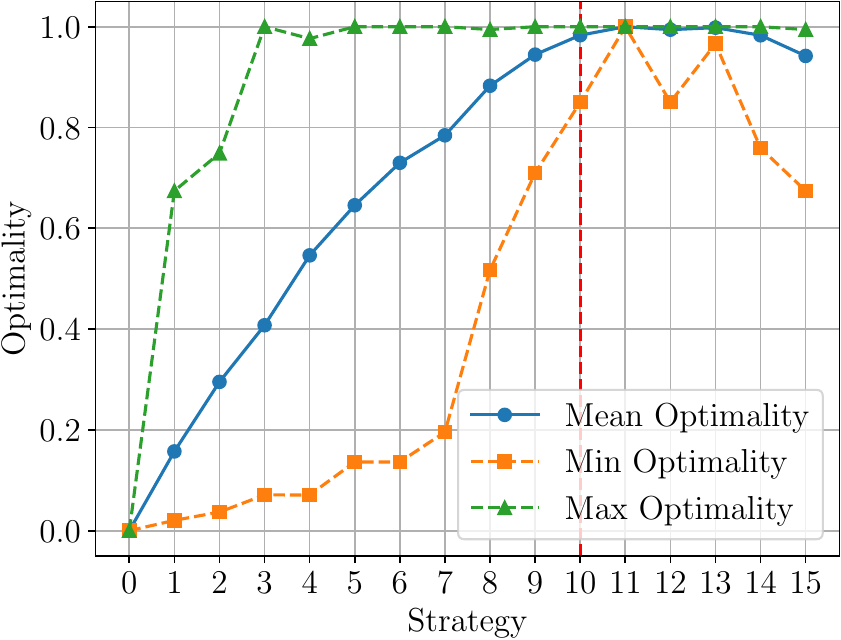}}\\
\subfloat[$n=6$ expert strategies]{\includegraphics[width=\textwidth]{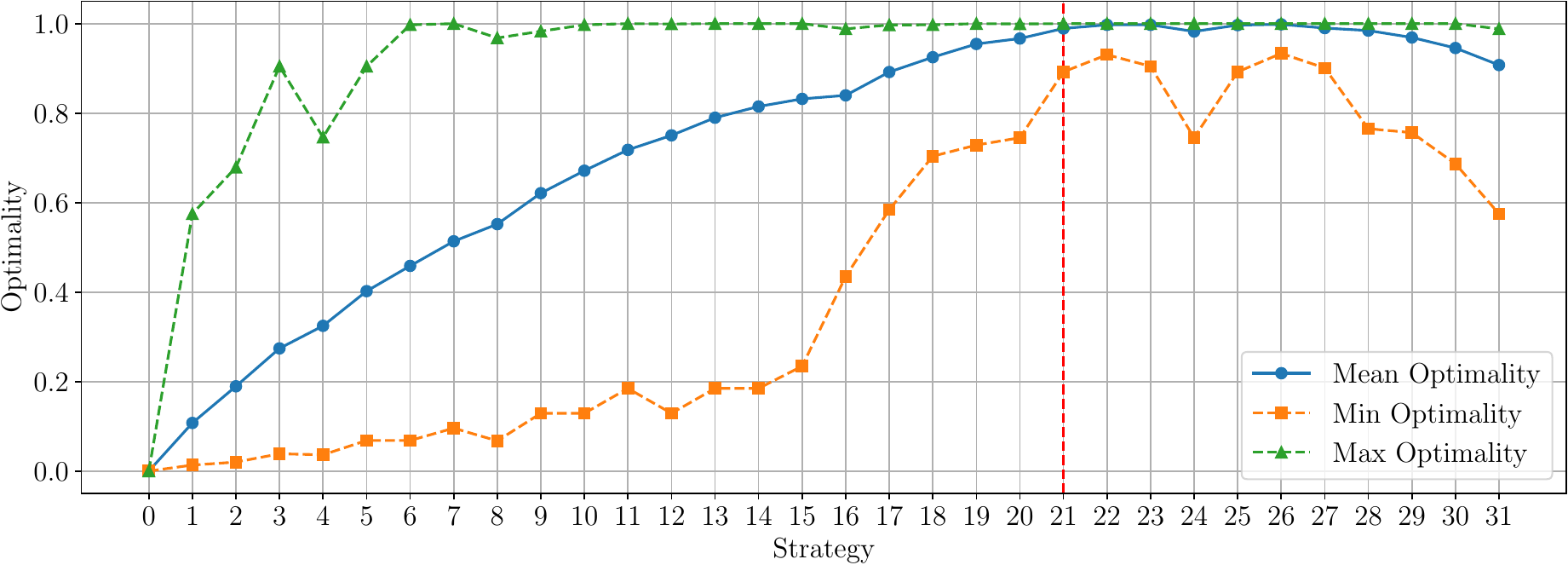}}
\caption{Numerical computation of strategy optimality for the $n=5,6$ expert problems.}
\label{fig:sparse_strat56}
\end{figure}

\begin{figure}[!t]
\centering
\subfloat[$n=7$ expert strategies]{\includegraphics[width=\textwidth]{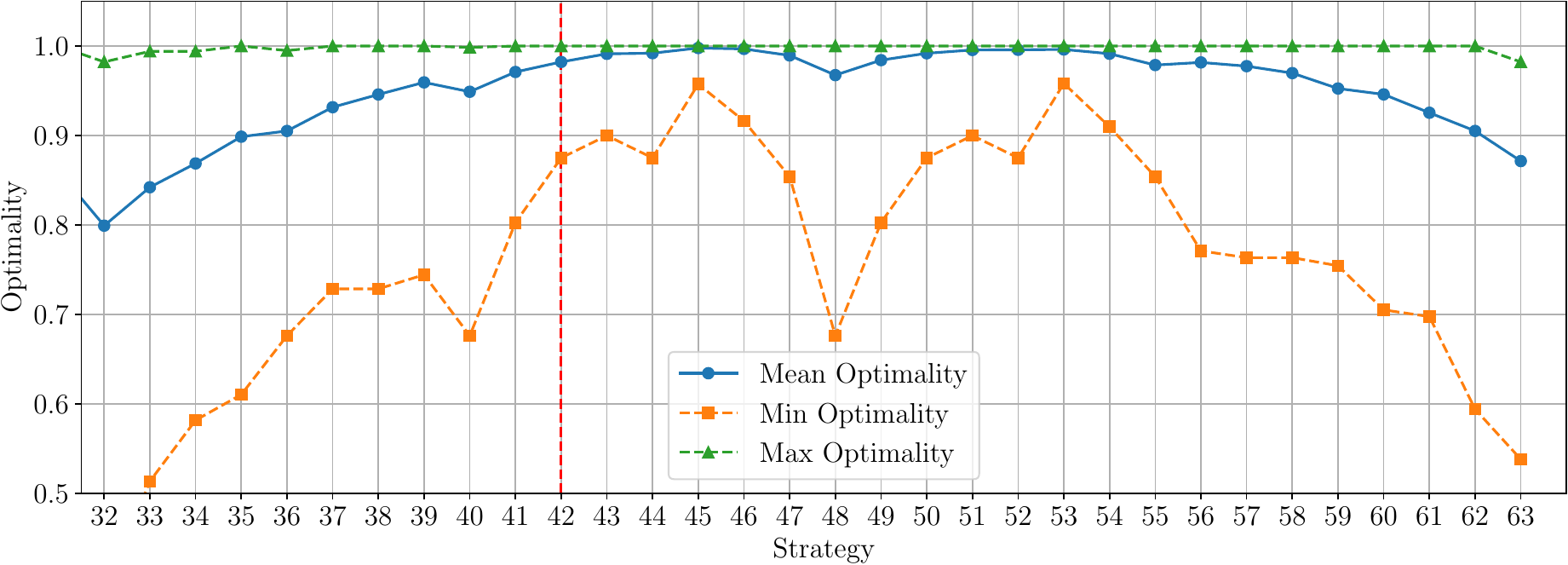}}\\
\subfloat[$n=8$ expert strategies]{\includegraphics[width=\textwidth]{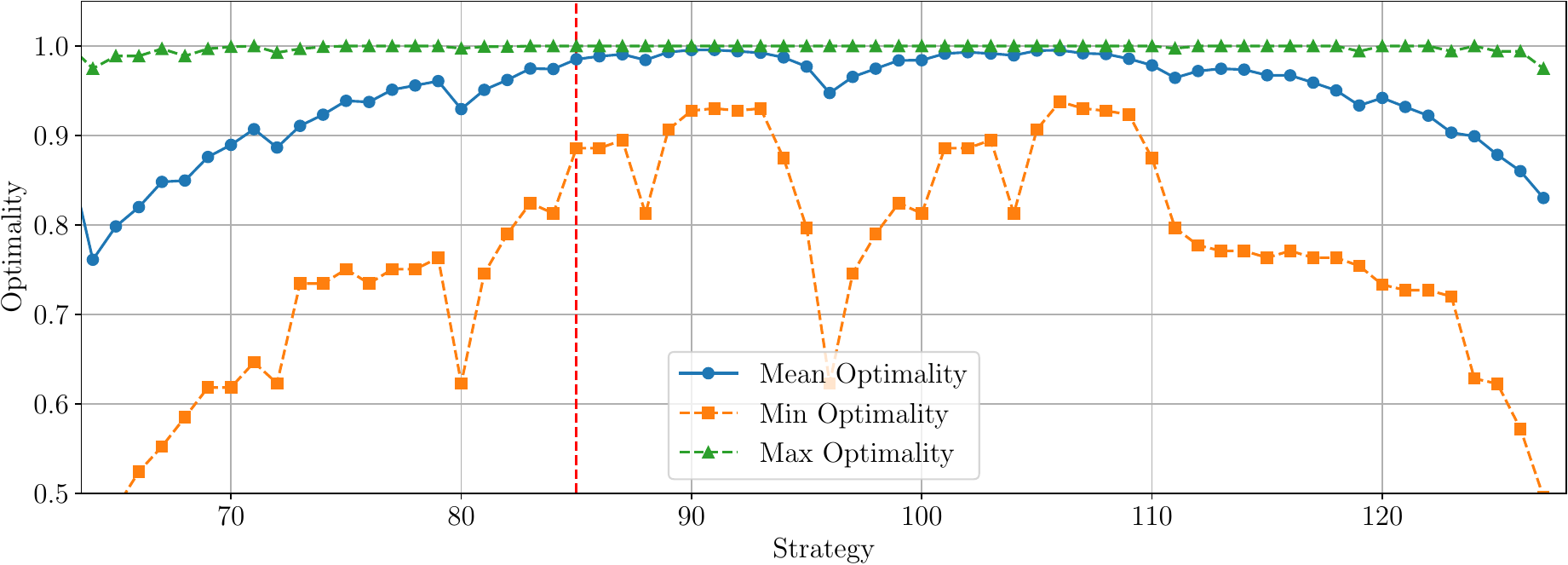}}
\caption{Numerical computation of strategy optimality for the $n=7,8$ expert problems. }
\label{fig:sparse_strat78}
\end{figure}

\begin{figure}[!t]
\centering
\subfloat[$n=9$ expert strategies]{\includegraphics[width=\textwidth]{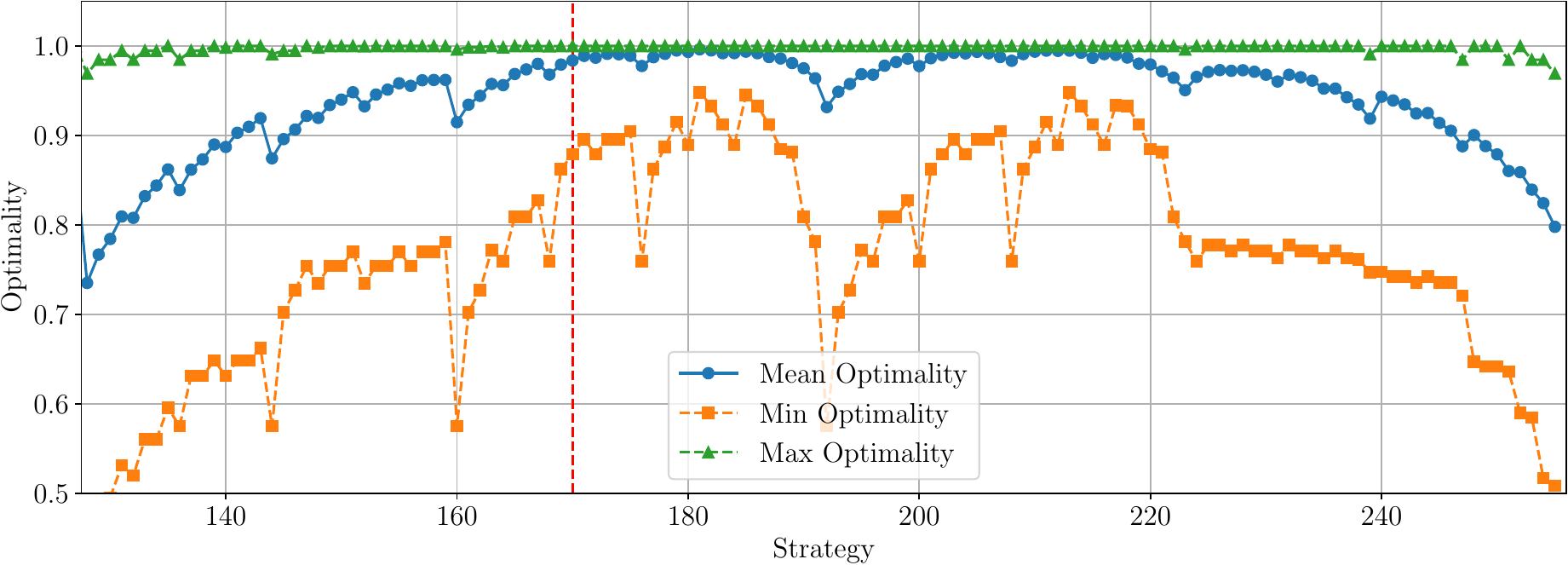}}\\
\subfloat[$n=10$ expert strategies]{\includegraphics[width=\textwidth]{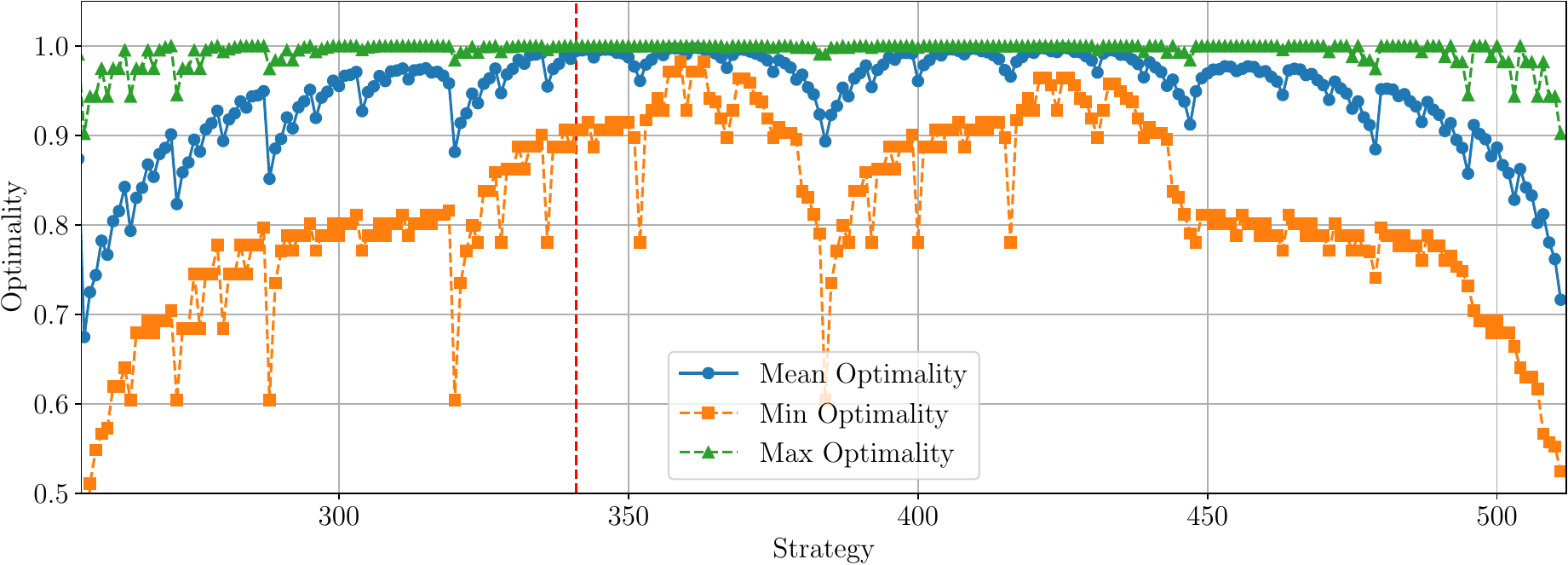}}\\
\caption{Numerical computation of strategy optimality for the $n=9,10$ expert problems. }
\label{fig:sparse_strat910}
\end{figure}

We used grid resolutions of $h=0.025$ for $n=5$, $h=0.05$ for $n=6$ and $h=0.1$ for $n=7$, $h=0.2$ for $n=8$, $h=0.25$ for $n=9$, and $h=0.35$ for $n=10$ experts. Table \ref{tab:num_grid_points} shows the number of grid points used in each dimension, compared to the number of grid points that would be required on the full grid. The simulations required between 25GB and 75GB of memory and each took less than one day to run on a single processor. Figures \ref{fig:sparse_strat56}, \ref{fig:sparse_strat78}, and \ref{fig:sparse_strat910} show the numerically computed optimality of the adversary's strategies for $n=5,6,7,8,9,10$ experts. In all cases, we have strong numerical evidence to indicate that the COMB strategy is \emph{not} globally optimal. This corroborates numerical evidence from \cite{chase2019experimental} for $n=5$.

We have strong numerical evidence in Figure \ref{fig:sparse_strat56} that the strategy $11=(0,1,0,1,1)$ is optimal for the $n=5$ expert problem over the box $[-1,1]^5$. Furthermore, the numerical evidence points to this being the \emph{only} optimal strategy, over the unit box, for $n=5$ experts. The minimum optimality score for strategy $11$ is $0.9999999999991459$, which is far more accurate than the second order $O(h^2)$ accuracy for $h=0.05$ would suggest. The second best competing strategy is $13=(0,1,1,0,1)$ with a minimum optimality score of $0.966$, which is well outside the range of numerical precision, suggesting that strategy $13$ is not globally optimal. 

The remaining plots in Figures \ref{fig:sparse_strat56}, \ref{fig:sparse_strat78}, and \ref{fig:sparse_strat910} provide very strong numerical evidence that there are no globally optimal adversarial strategies for $n=6,7,8,9,10$ experts. The highest minimum optimality scores are well outside of numerical precision. The one exception is the $n=10$ expert problem, where there are strategies with minimum optimality scores above $0.98$, while the grid resolution of $h=0.35$ yields $h^2 = 0.1225$. However, we expect this is an artifact from using a very coarse grid. We observed a similar phenomenon with $n=9$ experts, where some strategies appeared more optimal on a coarse grid.

\section{Conclusion and future work}

This paper developed and analyzed a numerical scheme to solve a degenerate elliptic PDE arising from prediction with expert advice in relatively high dimensions ($n \leq 10$) by exploiting symmetries in the equation and solution. Based on numerical results, we are able to make a number of conjectures for the optimality of various adversarial strategies in Section \ref{sec:main}. Our results have some limitations; mainly we are not able to solve the PDE on all of $\R^n$, and so our results are restricted to the box $[-1,1]^n$.

We expect these numerical methods could be extended to a few more experts, perhaps the $n=11$ and $n=12$ expert problems, using parallel processing or computational clusters with vastly more memory. The finite horizon problem should be amenable to similar techniques. There are also other prediction with expert advice PDEs, in particular the history-dependent experts setting \cite{calder2021asymp,drenska2021prediction}, that would benefit from numerical explorations. In terms of theory, we posed a number of conjectures and open problems that stem from this work in Section \ref{sec:main} that would be interesting to explore in future work. 

\appendix

\section{Definition of a viscosity solution}\label{sec:defviscosity}

For convenience, we recall the definitions of viscosity solutions for a general second order nonlinear partial differential equation
\begin{equation}\label{eq:hjbgen2}
H(\nabla ^2u,\nabla u,u,x) = 0 \ \ \text{ in } \O,
\end{equation}
where $H$ is continuous and $\O \subset \R^n$. Let $\usc(\O)$ (resp.~$\lsc(\O)$) denote the collection of functions that are upper (resp.~lower) semicontinuous at all points in $\O$. We make the following definitions.

We first recall the test function definition of viscosity solution.
\begin{definition}[Viscosity solution]\label{def:vis}
We say that $u \in \usc(\O)$ is a \emph{viscosity subsolution} of \eqref{eq:hjbgen2} if for every $x \in \O$ and every $\phi \in C^\infty(\R^n)$ such that $u - \phi$ has a local maximum at $x$ with respect to $\O$
\[H(\nabla ^2\phi(x),\nabla\phi(x),u(x),x) \leq 0.\]
We will often say that $u \in \usc(\O)$ is a viscosity solution of $H \leq 0$ in $\O$ when $u$ is a viscosity subsolution of \eqref{eq:hjbgen2}.  

Similarly, we say that $u \in \lsc(\O)$ is a \emph{viscosity supersolution} of \eqref{eq:hjbgen2} if for every $x \in \O$ and every $\phi \in C^\infty(\R^n)$ such that $u - \phi$ has a local minimum at $x$ with respect to $\O$
\[H(\nabla ^2\phi(x),\nabla \phi(x),u(x),x) \geq 0.\]
We also say that $u \in \lsc(\O)$ is a viscosity solution of $H \geq 0$ in $\O$ when $u$ is a viscosity supersolution of \eqref{eq:hjbgen2}.

Finally, we say $u$ is \emph{viscosity solution} of \eqref{eq:hjbgen2} if $u$ is both a viscosity subsolution and a viscosity supersolution.  
\end{definition}

For more details on the rich theory of viscosity solutions, we refer the reader to the user's guide \cite{crandall1992user} and \cite{calderviscosity}.

\bibliography{main}
\bibliographystyle{abbrv}

\end{document}